\newtheorem{theorem}{Theorem}[section]
\newtheorem{proposition}[theorem]{Proposition}
\newtheorem{corollary}[theorem]{Corollary}
\newtheorem{lemma}[theorem]{Lemma}
\theoremstyle{definition}
\newtheorem{definition}[theorem]{Definition}
\newtheorem*{assumption}{Assumption}
\theoremstyle{remark}
\newtheorem{remark}[theorem]{Remark}
\numberwithin{equation}{section}
\newcommand{\al}{\alpha}
\newcommand{\be}{\beta}
\newcommand{\de}{\delta}
\newcommand{\ep}{\varepsilon}
\newcommand{\ga}{\gamma}
\newcommand{\la}{\lambda}
\newcommand{\si}{\sigma}
\newcommand{\te}{\theta}
\newcommand{\vp}{\varphi}
\newcommand{\De}{\Delta}
\newcommand{\Ga}{\Gamma}
\newcommand{\La}{\Lambda}
\newcommand{\bD}{\mathbf{D}}
\newcommand{\bH}{\mathbf{H}}
\newcommand{\bLx}{\mathbf{L}^2_x}
\newcommand{\bL}{{\mathbf{L}^2}}
\newcommand{\tg}{{\widetilde g}}
\newcommand\hg{\widehat{g}}
\newcommand\Bg{\bar{g}}
\newcommand\Bga{\bar{\ga}}
\newcommand\Bh{\bar{h}}
\def\RR{\mathbb{R}}
\newcommand\bola{M}
\newcommand\obola{{\overline{M}}}
\newcommand\bdry{{\pd M}}
\newcommand{\ocA}{\overline{{\cA}}}
\newcommand{\cA}{{\mathcal A}}
\newcommand{\cC}{{\mathcal C}}
\newcommand{\cD}{{\mathcal D}}
\newcommand{\cE}{{\mathcal E}}
\newcommand{\tcE}{\widetilde{\mathcal E}}
\newcommand{\cF}{{\mathcal F}}
\newcommand{\tcF}{{\widetilde{\mathcal F}}}
\newcommand{\cG}{{\mathcal G}}
\newcommand{\cH}{{\mathcal H}}
\newcommand{\cL}{{\mathcal L}}
\newcommand{\cK}{{\mathcal K}}
\newcommand{\cO}{{\mathcal O}}
\newcommand{\cP}{{\mathcal P}}
\newcommand{\cS}{{\mathcal S}}
\newcommand{\cV}{{\mathcal V}}
\newcommand{\pd}{\partial}
\newcommand\minus\backslash
\newcommand{\e}{{\mathrm e}}
 \DeclareMathOperator\Ric{Ric}
\renewcommand\leq\leqslant
\renewcommand\geq\geqslant
\newcommand\less\lesssim
\newcommand\gr\grtsim
\newlength{\intwidth}
\newcommand\AdS{{\mathrm{AdS}_{n+1}}}
\newcommand\gref{\ga_0} 
\newcommand\bgref{\bar\ga_0}
\newcommand\poly{_{\mathrm{polyhom}}}
\newcommand\lot{\text{l.o.t.}}
\newcommand\pullback{(j_{(-T,T)\times\bdry})^{*}}
\newcommand{\triple}[1]{{\left\vert\kern-0.25ex\left\vert\kern-0.25ex\left\vert #1 
    \right\vert\kern-0.25ex\right\vert\kern-0.25ex\right\vert}}
\newcommand{\tripl}[1]{{\vert\kern-0.25ex\vert\kern-0.25ex\vert #1 
    \vert\kern-0.25ex\vert\kern-0.25ex\vert}}
\begin{document}

\title[Lorentzian Einstein metrics with prescribed conformal
infinity]{Lorentzian  Einstein metrics\\ with
  prescribed conformal infinity}

\author{Alberto Enciso}
\address{Instituto de Ciencias Matem\'aticas, Consejo Superior de
  Investigaciones Cient\'\i ficas, 28049 Madrid, Spain}
\email{aenciso@icmat.es}

\author{Niky Kamran}
\address{Department of Mathematics
  and Statistics, McGill University, Montr\'eal, Qu\'ebec, Canada H3A 2K6}
\email{nkamran@math.mcgill.ca}

%
%
\begin{abstract}
We prove a local well-posedness theorem for the $(n+1)$-dimensional Einstein equations 
in Lorentzian signature, with initial data $(\tg, K)$ whose asymptotic
geometry at infinity is similar to that anti-de Sitter (AdS) space,
and compatible boundary data $\hg$ prescribed at the time-like
conformal boundary of space-time. More precisely, we consider an
$n$-dimensional asymptotically hyperbolic Riemannian manifold
$(M,\tg)$ such that the conformally rescaled metric $x^2 \tg$ (with
$x$ a boundary defining function) extends to the closure $\obola$ of $M$ as a
metric of class $C^{n-1}(\obola)$ which is also polyhomogeneous of
class $C^{p}\poly(\obola)$. Likewise we assume that the conformally
rescaled symmetric $(0,2)$-tensor $x^{2}K$ extends to $\obola$ as a
tensor field of class $C^{n-1}(\obola)$ which is polyhomogeneous of class
$C^{p-1}\poly(\obola)$. We assume that the initial data $(\tg, K)$
satisfy the Einstein constraint equations and also that the boundary
datum is of class $C^p$ on $\partial M\times (-T_{0},T_{0})$ and
satisfies a set of natural compatibility conditions with the initial
data. We then prove that there exists an integer
$r_n$, depending only on the dimension $n$, such that if $p\geq
2q+r_n$, with $q$ a positive integer, then there is $T>0$, depending only on the norms of
the initial and boundary data, such that the Einstein
equations~\eqref{Einstein} has a unique (up to a diffeomorphism)
solution~$g$ on $(-T,T)\times\bola$ with the above initial and
boundary data,  which is such that $x^2g\in C^{n-1}((-T,T)\times\obola)\cap
C^q\poly((-T,T)\times\obola)$. Furthermore,
if $x^2\tg,x^2K$ are polyhomogeneous of class $C^\infty$ and  $\hg$ is
in $C^\infty((-T_0,T_0)\times\bdry)$, then $x^2g$ is in
$C^\infty\poly((-T,T)\times\obola)$.

\end{abstract}
\maketitle
\setcounter{tocdepth}{1}
\tableofcontents

\section{Introduction}
\label{S.intro}

Our goal in this paper is to prove a local well-posedness theorem for the $(n+1)$-dimensional Einstein equations 
\begin{equation}\label{Einstein}
\Ric(g)=-n g
\end{equation}
in Lorentzian signature, with initial data $(\tg, K)$ corresponding to
the asymptotic geometry of anti-de Sitter (AdS) space, and compatible
boundary data $\hg$ prescribed at the time-like conformal boundary of
space-time. More precisely, we consider an $n$-dimensional
asymptotically hyperbolic Riemannian manifold $(M,\tg)$, 
 such that the conformally rescaled metric $x^2 \tg$ extends to
 $\obola$, the union of $M$ with its boundary $\partial M$ (given by
 $x=0$), as a metric of class $C^{n-1}(\obola)$ which is polyhomogeneous of
 class $C^{p}\poly(\obola)$. Here and in what follows, $x$ is a
boundary defining function, that is a non-negative function on
$\obola$, smooth up to the boundary $\bdry$ of $M$, with $\bdry=\{x=0\}$ and such that the differential of
$x$ is nonzero on $\bdry$. We refer to Section~\ref{S.peeling} for the definition of polyhomogeneity.

Likewise we assume that the conformally rescaled symmetric
$(0,2)$-tensor $x^{2}K$ extends to $\obola$ as a tensor field of class
$C^{n-1}(\obola)$ which is polyhomogeneous of class
$C^{p-1}\poly(\obola)$. We
assume that the initial data $(\tg, K)$ satisfy the Einstein
constraint equations and also give boundary data of class $C^p$ on
$\partial M\times (-T_{0},T_{0})$ satisfying a set of natural
compatibility conditions with the initial data (we refer to
Appendix~\ref{A.constraint} for a discussion of the constraint
equations and compatibility conditions). The main result of our paper, which asserts that these initial and boundary data determine an Einstein
metric, can be stated as follows:

\begin{theorem}\label{T.main}
Suppose that we are given initial and boundary conditions $(\tg,K,\hg)$ with
$x^2\tg\in C^{n-1}(\obola)\cap C^p\poly(\obola)$, $x^2K\in
C^{n-1}(\obola)\cap C^{p-1}\poly(\obola)$ and $\hg\in
C^p((-T_0,T_0)\times \bdry)$ satisfying the constraint equations and
the compatibility conditions to order~$p$. There exists an integer
$r_n$, depending only on the dimension $n$, such that if $p\geq
2q+r_n$, then there is $T>0$, depending only on the norms of
the initial and boundary data, such that the Einstein
equations~\eqref{Einstein} has a unique (up to a diffeomorphism)
solution~$g$ on $(-T,T)\times\bola$ with the above initial and
boundary data,  which is such that $x^2g\in C^{n-1}((-T,T)\times\obola)\cap
C^q\poly((-T,T)\times\obola)$. Furthermore,
if $x^2\tg,x^2K\in
C^\infty\poly(\obola)$, $\hg\in C^\infty((-T_0,T_0)\times\bdry)$ and the
compatibility conditions are satisfied to all orders, then $x^2g\in C^\infty\poly((-T,T)\times\obola)$.
\end{theorem}


Hence the main result of our paper gives an extension to higher dimensions of the
fundamental pioneering work of Friedrich~\cite{Friedrich}, in which a general
existence theorem is proved for anti-de Sitter type space-times
in dimension $n+1=4$. The approach of~\cite{Friedrich} is based on a
reduction of the problem with boundary at infinity to a finite
maximally dissipative initial-boundary value problem, achieved through
an ingenious conformal representation of the Einstein equations in
dimension four. This leads to a general existence result for solutions
of the Einstein equations with negative cosmological constant
admitting a smooth conformal extension at space-like infinity. It is
should be noted that even though the results of~\cite{Friedrich} are
proved the assumption of smooth initial and boundary data, the method
used in~\cite{Friedrich} is flexible enough to allow for results on
metrics of lower regularity and can be extended to all even
space-time dimensions. 

The reason for which the method in~\cite{Friedrich} does not extend to the Einstein equations in  
odd space-time dimensions is that the metrics obtained through this approach are smooth
up to the boundary, while the Fefferman--Graham expansion~\cite{FG} implies that in odd dimension $n+1> 3$, the corresponding Einstein metric cannot have
this type of boundary regularity due to the appearance of log terms, which are present since the obstruction tensor does not vanish for a generic boundary datum in odd space-time dimensions. In the case of even
(e.g., four) space-time dimensions, this technical point has another subtle but significant effect: while the results
of~\cite{Friedrich} are finer than ours in the sense that initial data
that are smooth up the boundary are shown to yield Einstein metrics that are
also smooth up to the boundary (which is a stronger boundary regularity
result than the one we obtain), our result has the advantage that it also applies to
initial data that are only assumed to be polyhomogeneous, yielding polyhomogeneous
Einstein metrics. This is relevant because, even in four dimensions,
the solutions to the constraint equations constructed in~\cite{AC}
are generically polyhomogeneous (in fact, in $C^{n-1}(\obola)\cap
C^\infty\poly(\obola)$) but not smooth up to the boundary. (Notice however
that, despite this generic lack of smoothness up to the boundary, \cite{AC} does yield many nontrivial
solutions to the constraint equations that are smooth up to the
boundary and which give rise to many nontrivial Einstein metrics in
four dimensions directly using the breakthrough result of~\cite{Friedrich}.) Finally, it is worth
mentioning that we obtain explicit values for the constant~$r_n$ appearing in the statement of Theorem~\ref{T.main} (e.g., in four dimensions one can take
$r_3=17$) but that they are by no means sharp.

We shall see below that our purely PDE approach to
the formulation of the Einstein equations is of a different nature
from that of~\cite{Friedrich}, and that it uses instead as its
starting point some of the key similarities in the algebraic structure
of the Einstein equations between the cases of Lorentzian and
Euclidean signature. The existence of Einstein metrics in latter case
is well understood thanks to the work of Graham--Lee~\cite{GL},
Anderson~\cite{Anderson03,Anderson08}, Biquard~\cite{Biquard} and
others on the global existence and regularity of Riemannian Einstein
metrics with prescribed conformal infinity that are close, in a
suitable sense, to the hyperbolic metric. The situation in
Lorentzian signature is fundamentally different since it corresponds
to a hyperbolic evolution problem. Both the available analytical
techniques and the expected results are thus vastly different. In
particular, the metric $g$ is only guaranteed to exist  locally in
time (that is, for $|t|<T$), even for small data, a reflection of the
fact that the anti-de Sitter space is not expected to enjoy the good
stability properties of Minkowski space~\cite{CK} (we refer
to~\cite{B} and~\cite{F} for important recent work on the stability
problem for anti-de Sitter space). 

We would also like to mention that
besides the case of the Einstein equations considered
in~\cite{Friedrich}, the study of wave equations on asymptotically
anti-de Sitter spaces has attracted much attention in the last few
years. To the best of our knowledge, the wave equation on AdS$_4$ was
first considered by Breitenlohner and Freedman in~\cite{BF} using the
strong symmetry of the problem to separate variables. Again for
AdS$_4$, Choquet-Bruhat~\cite{Choquet1,Choquet2} proved global existence for the Yang--Mills equation under a radiation condition, and Ishibashi and Wald~\cite{IW} gave a proof of the well-posedness of the Cauchy problem for the Klein--Gordon equation in $\AdS$ using spectral theory. More refined results for the Klein--Gordon equation in an AdS space were developed by Bachelot~\cite{Bachelot1,Bachelot2,Bachelot3}, who used energy methods
and dispersive estimates to study the decay of the solutions and prove some results on the propagation of singularities.
In~\cite{Vasy}, Vasy established fine results on the propagation of singularities are proved for the Klein--Gordon equation on asymptotically AdS spaces using microlocal analysis. Holzegel and Warnick, both independently and in joint work~\cite{Holzegel,Warnick,HW}, used energy methods to prove the
well-posedness of the Cauchy problem for this equation in asymptotically AdS$_4$ space-times and discussed the boundedness of
solutions to the Klein--Gordon equation in stationary AdS black hole geometries. The local well-posedness for semilinear Klein--Gordon equations in asymptotically anti-de Sitter spaces with nontrivial boundary conditions at infinity was established
in~\cite{JMPA}. Spherically symmetric Einstein--Klein--Gordon systems have been considered in~\cite{WarnickNew}.

Finally, we mention that besides its interest as a question in geometric analysis and mathematical General Relativity, an important motivation for the problem of constructing Lorentzian Einstein manifolds with prescribed conformal infinity arises in the context of the AdS/CFT correspondence in string theory~\cite{Maldacena, Witten} (see~\cite{Anderson05,PRD} for further details on this point). The AdS/CFT correspondence is a conjectural relation which posits that a gravitational field on a Lorentzian $(n+1)$-manifold endowed with an asymptotically anti-de Sitter metric can be recovered from a conformal gauge field defined on the conformal boundary of the manifold. The gravitational field is typically modeled as a Lorentzian
metric $g$ satisfying the Einstein equation and the conformal gauge field corresponds to the conformal infinity $[\hg]$ of the metric. In this
setting, the {\em holographic principle}\/ asserts that the boundary data (which in the context of the Einstein equation would be the boundary metric
$\hg$), defined on the $n$-dimensional boundary, propagates through a suitable $(n+1)$-manifold referred to as the {\em bulk}\/ in the physics
literature) to determine the field (here the metric $g$) via a locally well-posed problem.

\section{Strategy of the proof}
\label{S.strategy}

In this section we will present the overall strategy of the proof of
Theorem~\ref{T.main}. We will also point out where the main points of
the argument can be found in the article, so this section also serves
as a guide to the paper.


Our first step is replace the Einstein equations~(\ref{Einstein}) with
modified Einstein equations taking the form of a quasilinear
hyperbolic system,  using what is often called DeTurck's
trick~\cite{DeTurck1,DeTurck2}. In the Riemannian case, this is amounts to writing the Einstein equations as an equivalent elliptic quasilinear system. 



The specific features of the quasilinear hyperbolic system
corresponding to Theorem~\ref{T.main} give rise to difficulties that make its proof rather involved, both technically and
conceptually. A first difficulty lies in the fact that asymptotically anti-de
Sitter metrics are not globally hyperbolic, so the classical local well-posedness result of Choquet-Bruhat~\cite{Ringstrom} does not apply. This is also reflected in
the fact that the modified Einstein equation when expressed in terms of the conformally rescaled metric ${\bar g}=x^{2}g$ contains terms 
that are strongly singular at the boundary $x=0$, so that the usual hyperbolic estimates are not enough
to control the behavior of the solutions of this equation. 
This requires the introduction of a functional framework adapted to
the geometry of these spaces. For this we rely on a scale of twisted,
weighted Sobolev spaces that are closely related to the spaces used in the edge
differential calculus~\cite{Mazzeo} but which we find more convenient
for our purposes.

A second difficulty is that in contrast to the globally hyperbolic case, where the modified Einstein equations correspond to a quasi-diagonal system (meaning that the leading part of the hyperbolic system 
is given by a scalar second-order differential operator, in our case the wave operator
$g^{\mu\nu}\pd_\mu\pd_\nu$), the leading part of the equations in the
asymptotically anti-de Sitter setting is no longer given by a quasi-diagonal
system. This is because the leading terms of the
equation (meaning the ones that cannot be absorbed into constants in
the estimates) are not only given by the second-order derivatives,
but also by additional terms that are singular at leading order when
$x=0$, and reflects the fact that, in the adapted
coordinates, the singularity at $x=0$ is critical from the point of
view of scalings. When these additional terms are taken into account, the
equation is no longer quasi-diagonal, so one must construct approximate
diagonalizations of the operators and take into account the fact that the 
estimates that we obtain in different ``eigenspaces'' are not
equivalent. It is remarkable, though, that the various powers of~$x$
that appear in scattered through the equations work together to allow us to prove Theorem~\ref{T.main}.

A third difficulty is that, in general, it is notoriously hard to impose
boundary conditions in the Einstein equations (see e.g.~\cite{FN} and
references therein). The way that we circumvent this problem is by
constructing the solution metric $g$ as a sum of two terms, one that
is ``large'' at infinity and which we construct using essentially
algebraic methods, and one which is ``small'' at infinity, whose existence must be proved using
analytic techniques, so that for all practical purposes one does not need to consider the boundary
conditions here.

Hence we are led to considering the following strategy in order to tackle the problem:
\medskip

\noindent{\em Step 1: The modified Einstein equation.} In
Section~\ref{S.modified} we discuss how one can replace the Einstein
equations~\eqref{Einstein} by a quasilinear hyperbolic system
$Q(g)=0$ using DeTurck's trick. Although from a conceptual point of view the argument
goes along familiar lines, the lack of global hyperbolicity makes technically
nontrivial some arguments needed to prove 
that both equations are equivalent. This is established in
Section~\ref{S.DeTurck} using ideas developed in the paper
(Theorem~\ref{T.DeTurck}).\smallskip

\noindent{\em Step 2: Peeling off the metric.} In
Section~\ref{S.peeling} we construct asymptotically anti-de Sitter
metrics $\ga_l$ that are ``approximate solutions'' to the modified
Einstein equation $Q(g)=0$ and satisfy the desired boundary conditions
(Theorem~\ref{T.peeling}). These metrics
have the property that $Q(\ga_l)$ is suitably small and are obtained
from the boundary datum~$\hg$ in an essentially algebraic
way that can be understood as peeling off the leading ``layers'' of the
solution at $x=0$, step by step. The parameter~$l$ corresponds to the
number of steps that one considers and is related to the norms in
which $\ga_l$ is an approximate solution of the modified Einstein
equations. One should notice that, in general, the rescaled metrics
$\Bga_l:=x^2\ga_l$ are not
smooth up to the boundary, but in some polyhomogeneous space $C^{n-1}\cap C^{p_l}\poly$.

\noindent{\em Step 3: Setting an iteration within a suitable
  functional framework.} To construct the metric~$g$ that solves the
modified Einstein equation, we write it as
\[
g=\ga + x^{\frac n2}u\,,
\]
where we have set $\ga:= \ga_l$ for a large enough~$l$. There
$\ga$ is going to be the ``large'' part at $x=0$ and the other terms
is going to be ``small'' at the boundary. 

To construct~$u$, we set up an iteration in
Section~\ref{S.iteration}. The convergence of this iteration will not
be proved until Section~\ref{S.convergence}, however. Before that, we need to
define suitable Sobolev spaces adapted to the geometry of the anti-de
Sitter space in which we can derive suitable estimates for~$u$. In
Sections~\ref{S.Sobolev} and~\ref{S.nonlinear} we consider two related
scales of Sobolev spaces, $\bH^{m,r}_\al$ and~$\cH^{m,r}$, and derive
several key estimates for them. It should be noticed that not only the
are proofs of these estimates different from those of the usual
Sobolev spaces $H^k(\RR^n)$, but so is also the case for the range of parameters
for which e.g.\ we have pointwise estimates (Corollary~\ref{C.Cmr}) or
can obtain estimates for the product of two functions
(Theorem~\ref{T.multilinear}).\smallskip

\noindent{\em Step 4: Linear estimates and convergence of the
  iteration.} Using the above adapted Sobolev spaces, in Section~\ref{S.linear} we obtain estimates for the
linear operators that appear in the iteration under certain assumptions
about the structure of the metric. Here the way that the various
powers of~$x$ appear is crucial to deriving the estimates that are
analogous (although the spaces and range of parameters are
different) to the usual ones obtained for globally hyperbolic quasilinear wave
equations. It should be emphasized though that the combination of the equation being effectively not quasi-diagonal with the fall-off of the
nonlinearities at the boundary make the analysis of the linear
equations and the treatment of the functional spaces much subtler than
in our previous paper~\cite{JMPA}, which was only concerned with scalar equations.

With these estimates in hand and equipped with the results about the
adapted Sobolev spaces established in the previous step, the proof of
the convergence of the iteration goes along the lines of the classical
result for globally hyperbolic spaces. The details are presented in
Section~\ref{S.convergence} although, as we have already mentioned, one has to wait until Theorem~\ref{T.DeTurck} to
show that these metrics are in fact Einstein.\medskip

The paper concludes with two appendices. In
Appendix~\ref{A.constraint} we recall the constraint and
compatibility conditions that must be imposed on the initial and
boundary data and the Andersson--Chrusciel result on the existence of
solutions to the constraint equations. In Appendix~\ref{A.AA} we record some results about the
integral operators $A_\al$ and~$A_\al^*$, defined in~\eqref{defAA*},
that we established in~\cite{JMPA}. These operators play an important
role in Sections~\ref{S.Sobolev} and~\ref{S.nonlinear}. For the benefit of the reader, we
also include a sketch of the proof.

\section{The modified Einstein equation}
\label{S.modified}

When dealing with the Einstein equation, a first difficulty, well understood by now,
is that the gauge invariance of the Einstein equation under changes of
coordinates makes it a very degenerate system. 
A standard way of solving this difficulty is using a technique that
is often called ``DeTurck's
trick''~\cite{DeTurck1,DeTurck2}, which employs a reference metric to
get rid of this gauge freedom. In the setting that we are considering,
it is important to choose a reference metric, which we will denote
by $\gref$, which a certain asymptotic behavior at infinity. To
avoid unnecessary repetitions, let us then begin by introducing the
following definition, where $I:=(-T_0,T_0)$ denotes a small interval of
the real line containing~$0$. 

\begin{definition}\label{D.waAdS}
A metric $g$ on $I\times\bola$ is called {\em weakly
  asymptotically AdS}\/ if the following conditions hold:
\begin{enumerate}
\item The rescaled reference metric $\Bg:=x^2g$ is of class
  $C^2$ up to the boundary.
\item The differential of the function~$x$ satisfies
  $\Bg^{\mu\nu}(\pd_\mu x)(\pd_\mu x)=1$ on $I\times\bdry$.
\end{enumerate}
\end{definition}

This definition is motivated by the formal calculations of Graham and
Lee in~\cite{GL}, many of which carry over verbatim to the case of
Lorentzian signature. The definition should be compared with that of an asymptotically AdS
metric, cf.~\cite{HT}.

We will choose the reference metric $\gref$ to be a weakly
asymptotically AdS metric on $I\times\bola$ such that the pullback of
\[
\bgref:=x^2\gref
\]
to $I\times\obola$ is $\hg$. 
A convenient way of doing this in terms
of the initial metric $g_0:=g|_{t=0}$, which we write in terms of the
initial data as described in Appendix~\ref{A.constraint}, is the following (we recall that the
pullback of $\Bg_0:=x^2 g_0$ to the boundary is precisely~$\hg$). Identifying $TI=I\times\RR$, for any $(t,z)\in I\times\bdry$
let us consider the tensor on
$T_{(t,z)}(I\times \bdry)=\RR\times T_z\bdry)$ given by
\[
G':=\hg|_{(t,z)}-\hg|_{(0,z)}\,.
\]
Now let $G$ be the only tensor on $T_{(t,z)}(I\times\bola)=\RR\times
T_z M$ which satisfies
\[
\pullback G=G'\,,\qquad
  (\Bg_0|_{z}+G)^{-1}dx=\Bg_0^{-1}|_z dx
\]
at $(t,z)$. Notice that, by continuity, the inverse appearing in the
second equation is well defined
provided that the interval $I$ is small enough. This defines a tensor field on $I\times\pd \bola$.

We can now extend $G$ to a tensor field $E(G)$ defined on a small neighborhood of
$I\times\bdry$, for instance by parallel transport with respect to
the metric $\bar g_0$ along integral curves of the gradient of~$x$. A suitable reference metric can then be constructed as $\gref:=x^{-2}\bgref$ with
\begin{equation}\label{ga0}
\bgref:= \Bg_0+\chi\, E(G)\,,
\end{equation}
with $\chi$ a suitable cutoff function that is equal to $1$ in a
neighborhood of the boundary. Notice that the reference
metric depends on the boundary and initial data and that it is a
(non-degenerate) Lorentzian metric because~$E(G)$ is
small if the interval is small.

Let us now denote by $\Ga^\nu_{\la\rho}$ and $\widetilde\Ga^\nu_{\la\rho}$
the Christoffel symbols of the metrics~$g$ and~$\gref$,
respectively. DeTurck's trick consists in looking for solutions to the
modified Einstein equation
\begin{equation}\label{eqQ}
Q(g)=0\,,
\end{equation}
where the components of the tensor $Q(g)$ are given in terms of those
of the Ricci tensor, $R_{\mu\nu}$, by
\begin{equation}\label{Q}
Q_{\mu\nu}:= R_{\mu\nu}+n g_{\mu\nu}+\frac12(\nabla_\mu W_\nu+\nabla_\nu W_\mu)\,.
\end{equation}
Here the covariant derivatives and the Ricci tensor are those of the
metric $g$ and the $1$-form $W$ is
\begin{equation}\label{W}
W_\mu:=g_{\mu\nu}\,g^{\la\rho}\,(\Ga^\nu_{\la\rho}-\widetilde\Ga^\nu_{\la\rho})\,.
\end{equation}
We will discuss the relationship between the solutions
of the Einstein equations~\eqref{Einstein} and those of the modified equation~\eqref{eqQ} in Section~\ref{S.DeTurck}, as the
lack of global hyperbolicity introduces some peculiarities. It is
worth mentioning that $Q(g)$ also depends on the initial and boundary conditions through the
reference metric~$\gref$.

It is well-known that the advantage of Equation~\eqref{eqQ} over the
Einstein equations is that the nondegeneracy has been taken care of;
indeed, \eqref{eqQ} is a quasilinear wave equation because
\begin{equation}\label{QDeg}
Q_{\mu\nu}=-\frac12 g^{\la\rho}\pd_\la\pd_\rho g_{\mu\nu}+
B_{\mu\nu}(g,\pd g)\,,
\end{equation}
with the second term quadratic in $\pd g$.
Our goal now is to solve the modified Einstein equation~\eqref{eqQ}
together with the compatible initial and
boundary conditions
\begin{align*}
g|_{t=0}=g_0\,,\qquad \pd_t g|_{t=0}=g_1\,,\qquad
\pullback \Bg=\hg\,.
\end{align*}
For the class of metrics that we
are considering, the coefficients are strongly singular at
$x=0$. Indeed, it essentially follows from a computation by Graham and
Lee~\cite[Equation~(2.19)]{GL} that for a weakly
asymptotically AdS metric $g$ one can
express~\eqref{QDeg} in terms of $\Bg$ as
\begin{equation}\label{Qsing}
Q_{\mu\nu}=\frac1{x^2}\,\bigg(n(1-\Bg^{\la\rho}x_\la
x_\rho)\,\Bg_{\mu\nu}-\frac12(B_\mu x_\nu+B_\nu x_\mu)\bigg) + \frac1x
\cP^1(\Bg)+\cP^2(\Bg)\,,
\end{equation}
where $x_\mu:=\pd_\mu x$,
\[
B_\mu:=\Bg^{\la\rho}(\Bga_0)_{\la\rho}\, \Bg_{\mu\nu}(\Bga_0)^{\nu\la}x_\la-(n+1)x_\mu\,,
\]
$\bgref:=x^2\gref$ and $\cP^1(\Bg)$ (respectively~$\cP^2(\Bg)$) stands for terms that depend
smoothly on $x$, $\Bg$, $\Bga_0$ and $\pd\Bga_0$ and are linear in
$\pd\Bg$ (respectively linear in $\pd^2\Bg$ and quadratic in $\pd\Bg$,
depending also on $\pd^2\Bga_0$).
Here all the indices are raised and lowered using the metric $\Bg_{\mu\nu}$ but
$(\Bga_0)^{\mu\nu}$, which is the inverse of $\Bga_0$.

In view of Equation~\eqref{Qsing}, we can immediately make the following
important observation:

\begin{proposition}\label{P.Qsing}
Suppose that $g$ is a weakly asymptotically AdS metric. Then $Q(g)=\cO(x^{-1})$
if and only if the following relations hold true on
$(-T,T)\times\bdry$:
\[
\Bg^{\mu\nu}(\bgref)_{\mu\nu}=n+1\qquad \text{and} \qquad \Bg^{\mu\nu}x_\nu=(\bgref)^{\mu\nu}x_\nu\,.
\]
\end{proposition}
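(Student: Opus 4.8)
The plan is to read off the claim directly from the singular expansion~\eqref{Qsing}. The key point is that $Q(g)$ has an a~priori $x^{-2}$ singularity, so $Q(g)=\cO(x^{-1})$ is equivalent to the vanishing, on the boundary $\{x=0\}$, of the tensor
\[
T_{\mu\nu}:=n(1-\Bg^{\la\rho}x_\la x_\rho)\,\Bg_{\mu\nu}-\tfrac12(B_\mu x_\nu+B_\nu x_\mu)\,,
\]
since $\tfrac1x\cP^1(\Bg)+\cP^2(\Bg)=\cO(x^{-1})$ automatically for a weakly asymptotically AdS metric (its coefficients are smooth up to the boundary by Definition~\ref{D.waAdS}). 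So the whole proposition reduces to: $T_{\mu\nu}=0$ on $(-T,T)\times\pd\ball$ if and only if the two stated scalar/vector relations hold.

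First I would decompose $T_{\mu\nu}$ along $dx$ and its $\Bg$-orthogonal complement. On the boundary we may use the structure imposed by Definition~\ref{D.waAdS}(ii): in the coordinates $(t,x,\te)$ one has $\Bg_{x\mu}=\de_{x\mu}$ at $x=0$, hence $x_\mu=\de_{x\mu}$, $\Bg^{\la\rho}x_\la x_\rho=\Bg^{xx}=1$, and more generally $\Bg^{x\mu}x_\mu=1$ while the ``tangential'' block of $\Bg$ is nondegenerate. The coefficient $n(1-\Bg^{\la\rho}x_\la x_\rho)$ thus vanishes \emph{identically} on the boundary regardless of the boundary conditions, so the first term of $T_{\mu\nu}$ drops out and we are left with $T_{\mu\nu}=-\tfrac12(B_\mu x_\nu+B_\nu x_\mu)$ on $\{x=0\}$. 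Consequently $Q(g)=\cO(x^{-1})$ on the boundary is equivalent to $B_\mu x_\nu+B_\nu x_\mu=0$ there.

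Next I would contract $B_\mu x_\nu+B_\nu x_\mu=0$ with $\Bg^{\mu\nu}$: using $|dx|^2_{\Bg}=\Bg^{\mu\nu}x_\mu x_\nu=1$ on the boundary, this gives $B_\mu(\Bg^{\mu\nu}x_\nu)=0$, i.e.\ $B_\mu x^\mu=0$ (with index raised by $\Bg$). Conversely, contracting the full symmetric equation with the vector $x^\nu:=\Bg^{\nu\la}x_\la$ yields $B_\mu + (B_\nu x^\nu)\,x_\mu=0$, so if $B_\nu x^\nu=0$ then $B_\mu=0$; and $B_\mu=0$ trivially gives $B_\mu x_\nu+B_\nu x_\mu=0$. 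Hence $T_{\mu\nu}=0$ on the boundary is equivalent to $B_\mu=0$ on the boundary. It then remains to translate $B_\mu=0$ into the two displayed relations. Writing out $B_\mu=\Bg^{\la\rho}(\Bga_0)_{\la\rho}\,\Bg_{\mu\nu}(\Bga_0)^{\nu\la}x_\la-(n+1)x_\mu$, the vanishing of its component along the tangential directions forces the covector $\Bg_{\mu\nu}(\Bga_0)^{\nu\la}x_\la$ to be proportional to $x_\mu$; lowering with $\Bga_0$ this says $\Bg^{\mu\nu}x_\nu=c\,(\Bga_0)^{\mu\nu}x_\nu$ for a scalar $c$, and plugging back into the $x$-component of $B_\mu=0$ pins down $c$ together with the trace condition $\Bg^{\mu\nu}(\Bga_0)_{\mu\nu}=n+1$; a short computation (using $|dx|_{\bgref}=1$, which holds because $\gref$ is also weakly asymptotically AdS) shows $c=1$, giving exactly $\Bg^{\mu\nu}x_\nu=(\bgref)^{\mu\nu}x_\nu$. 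This last algebraic bookkeeping is the only place requiring care, but it is entirely parallel to the corresponding Riemannian computation in~\cite{GL}, so I expect no genuine obstacle; the main subtlety is just to keep straight which indices are raised with $\Bg$ and which with $\bgref$, and to use the boundary normalizations from Definition~\ref{D.waAdS} consistently.
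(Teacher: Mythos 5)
Your argument is correct and is exactly the intended one: the paper states this proposition without proof as an ``immediate observation'' from Eq.~\eqref{Qsing}, and your write-up (the $x^{-2}$ coefficient must vanish at $x=0$; the trace term drops since $|dx|^2_{\Bg}=1$ on the boundary; $B_\mu x_\nu+B_\nu x_\mu=0$ is equivalent to $B_\mu=0$ by contracting with $x^\nu$; and $B_\mu=0$ unpacks to the two stated relations using $|dx|^2_{\bgref}=1$) is precisely the verification the authors leave to the reader. The only cosmetic point is that ``$\tau:=\Bg^{\la\rho}(\bgref)_{\la\rho}\neq0$'' should be noted before dividing (it follows at once from the normal component of $B_\mu=0$), and that Definition~\ref{D.waAdS}(ii) evidently intends $\Bg_{xx}=1$, as you assumed.
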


\section{Peeling off the metric}
\label{S.peeling}

Throughout the defining function $x$ will be a $C^\infty$ positive
function on $\bola$ that vanishes to first order at the
boundary, which ensures that one can take it as a coordinate in a certain neighborhood of the boundary $\bdry$ in $ \bola$,
which we will denote by $\cA$. To parametrize $\cA$ we will always take coordinates $(x,\te)$,
where $\te=(\te^1,\dots,\te^{n-1})$ are local coordinates
on~$\bdry$. Since the analysis of the equation $Q(g)=0$ is only
problematic in a neighborhood of the boundary, these are the most convenient coordinates to carry
out the key estimates that are needed in this paper.

Let us start with some preliminary results that we will need to prove
the main result of this section. Here we denote by $\cS^2$ the space of symmetric covariant $2$-tensors
on $I\times\obola$. In the following proposition we provide a
convenient decomposition of this space at any point close to, or
lying on, the boundary $I\times\bdry$. {\em Throughout the section, we will assume that $g$ is a weakly
asymptotically AdS metric.}

\begin{proposition}\label{P.Vj}
In $I\times\ocA$, the space of symmetric tensors can be decomposed as
\[
\cS^2=\cV_0^{g}\oplus \cV_1^{g}\oplus \cV_2^{g}\oplus \cV_3^{g}\,,
\]
where
\begin{align*}
\cV_0^{g}&:=\big\{ H\in\cS^2: H_{\mu\nu}=\vp \Bg_{\mu\nu}\text{ with
  $\vp$ scalar} \big\}\,,\\
\cV_1^{g}&:=\big\{ H\in\cS^2: H_{\mu\nu}\Bg^{\nu\la}x_\la=0 \text{ and } H_{\mu\nu}\Bg^{\mu\nu}=0\}\,,\\
\cV_2^{g}&:=\big\{ H\in\cS^2: H_{\mu\nu}=\vp\,[(n+1)x_\mu
x_\nu-\Bg_{\mu\nu}] \text{ with $\vp$ scalar}\}\,,\\
\cV_3^{g}&:=\big\{ H\in\cS^2: H_{\mu\nu}=a_\mu x_\nu+a_\nu x_\mu
\text{ with } \Bg^{\la\rho}a_\la x_\rho=0\}\,.
\end{align*}
\end{proposition}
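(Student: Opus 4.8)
The plan is to observe that Proposition~\ref{P.Vj} is a fibrewise statement about the bundle $\cS^2$ over $I\times\ocA$, so I would fix a point $p\in I\times\ocA$ and prove the decomposition in the single $\binom{n+2}{2}$-dimensional fibre. Since $g$ is weakly asymptotically AdS and $R$ is taken close to~$1$, the function $s:=|dx|^2_{\Bg}=\Bg^{\mu\nu}x_\mu x_\nu$ is close to~$1$ on all of $I\times\ocA$: it equals~$1$ on the conformal boundary and is continuous up to it. In particular $s\neq0$, so the vector $X^\mu:=\Bg^{\mu\nu}x_\nu$ is non-null for~$\Bg$ (indeed $\Bg_{\mu\nu}X^\mu X^\nu=s$), and one has the $\Bg$-orthogonal splitting $T_p(I\times\ball)=\RR X\oplus X^\perp$ into a line and a non-degenerate $n$-plane, together with the identity $\Bg_{\mu\nu}=\tfrac1s\,x_\mu x_\nu+h_{\mu\nu}$, where $h$ is the restriction of $\Bg$ to $X^\perp$ extended by zero (both verified by contracting with $X^\nu$ and with vectors of $X^\perp$).

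With this splitting I would decompose an arbitrary $H\in\cS^2$ step by step. First peel off the ``doubly normal'' part $s^{-2}H_{\al\be}X^\al X^\be\,x_\mu x_\nu$, which is a multiple of $x_\mu x_\nu$; then, from the remainder $H^{(1)}$, peel off the ``mixed'' part $a_\mu x_\nu+a_\nu x_\mu$ with $a_\mu:=s^{-1}H^{(1)}_{\mu\nu}X^\nu$, which lies in $\cV_3^g$ because a one-line contraction with $X^\nu$ gives $\Bg^{\la\rho}a_\la x_\rho=0$. What is left, call it $H^{(2)}$, satisfies $H^{(2)}_{\mu\nu}X^\nu=0$ (it is ``doubly tangential''), and I would split it into its $\Bg$-trace part $\tfrac\tau n\big(\Bg_{\mu\nu}-\tfrac1s x_\mu x_\nu\big)$, where $\tau:=\Bg^{\mu\nu}H^{(2)}_{\mu\nu}$ and one uses $\Bg^{\mu\nu}\big(\Bg_{\mu\nu}-\tfrac1s x_\mu x_\nu\big)=n$, and a complementary part which then lies in $\cV_1^g$ by construction. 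Finally, since $x_\mu x_\nu=\tfrac1{n+1}\big[\Bg_{\mu\nu}+\big((n+1)x_\mu x_\nu-\Bg_{\mu\nu}\big)\big]$, both the doubly-normal term and the trace term lie in $\myspan\{\Bg_{\mu\nu},\,(n+1)x_\mu x_\nu-\Bg_{\mu\nu}\}=\cV_0^g\oplus\cV_2^g$, the two generators being independent because $\Bg_{\mu\nu}$ has rank $n+1\geq2$ while $x_\mu x_\nu$ has rank~$1$. This writes $H$ as a sum of one element from each $\cV_j^g$, so $\cS^2=\cV_0^g+\cV_1^g+\cV_2^g+\cV_3^g$.

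It then remains to check the sum is direct; this is consistent with, and follows at once from, the spanning just proved together with the dimension count $1+\big(\tfrac{n(n+1)}{2}-1\big)+1+n=\binom{n+2}{2}=\dim\cS^2$, but it is just as quick to check directly. Given a relation $\vp\,\Bg_{\mu\nu}+H_1+\vp_2\big[(n+1)x_\mu x_\nu-\Bg_{\mu\nu}\big]+(a_\mu x_\nu+a_\nu x_\mu)=0$ with $H_1\in\cV_1^g$ and $\Bg^{\la\rho}a_\la x_\rho=0$, taking the $\Bg$-trace kills the $\cV_1^g$ and $\cV_3^g$ terms and gives $\vp+\vp_2(s-1)=0$, while contracting with $X^\mu X^\nu$ likewise gives $\vp+\vp_2\big((n+1)s-1\big)=0$; subtracting yields $ns\,\vp_2=0$, hence $\vp_2=\vp=0$ since $ns\neq0$. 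Then contracting the surviving identity $H_1+(a_\mu x_\nu+a_\nu x_\mu)=0$ with $X^\nu$ gives $s\,a_\mu=0$, so $a=0$ and finally $H_1=0$.

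Everything here is elementary pointwise linear algebra, and I do not anticipate a real obstacle. The only points needing (trivial) care are that the denominators $s$, $n$, $n+1$, $ns$ appearing in the projections stay bounded away from~$0$ near the conformal boundary, which holds because $s\to1$ there and $n\geq3$, and that the resulting projections inherit the regularity of $\Bg$ and $x$, so that the fibrewise decomposition is genuinely a splitting of the bundle $\cS^2$ over $I\times\ocA$. In practice the only mildly error-prone step is the bookkeeping that matches each block of the orthogonal splitting to the correct subspace $\cV_j^g$.
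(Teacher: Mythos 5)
Your proof is correct, and it takes a different (more constructive) route than the paper. The paper's own proof is a one-line dimension count: it asserts that the pairwise intersections $\cV_i^g\cap\cV_j^g$ are trivial and that the fibrewise dimensions $1$, $\tfrac{n(n+1)}2-1$, $1$, $n$ sum to $\dim\cS^2=\tfrac{(n+1)(n+2)}2$, and concludes. As literally stated that argument is slightly incomplete, since for more than two subspaces pairwise trivial intersections plus a dimension count do not by themselves yield a direct sum --- one also needs to know the subspaces span, or to check independence of a full four-term relation. Your proposal supplies exactly these missing ingredients: the explicit peeling $H\mapsto H^{(1)}\mapsto H^{(2)}\mapsto H^{(3)}$ along the $\Bg$-orthogonal splitting determined by $X^\mu=\Bg^{\mu\nu}x_\nu$ proves spanning, and the contractions with $\Bg^{\mu\nu}$ and $X^\mu X^\nu$ prove independence of a general relation (your identities $\vp+\vp_2(s-1)=0$ and $\vp+\vp_2((n+1)s-1)=0$ check out, as does the final contraction giving $s\,a_\mu=0$). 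You also make explicit a hypothesis the paper glosses over: it is not enough that $dx\neq0$ (which is all the paper invokes); one genuinely needs $s=|dx|^2_{\Bg}$ bounded away from zero, since for $s=0$ the tensor $x_\mu x_\nu$ would lie in $\cV_3^g\cap(\cV_0^g\oplus\cV_2^g)$. This holds on $I\times\ocA$ because $s=1$ on the conformal boundary, $\Bg$ is continuous up to it, and $R$ is taken close to $1$. In short, the paper's proof buys brevity; yours buys a complete verification and exhibits the projection operators, which is harmless extra information.
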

\begin{proof}
Since the $1$-form $dx$ does not vanish in $I\times\ocA$, it is easy to check
that $\cV_i^g\cap \cV_j^g=\{0\}$ if $i\neq j$ and that the dimensions of
the spaces $\cV_j^g$ at each point of $I\times \ocA$ are
\[
1\,,\quad \frac{n(n+1)}2-1\,,\quad 1\quad\text{and} \quad n\,,
\]
respectively. The sum of these numbers gives 
\[
\frac{(n+1)(n+2)}2\,,
\] 
that is, the dimension of $\cS^2$ at any point. The
proposition then follows.
\end{proof}

In what follows we will need more information about the structure of
the modified Einstein operator $Q(g)$ in a neighborhood of the
boundary. To analyze $Q(g)$, we will restrict our attention to the
set~$I\times \cA$ and use coordinates $(t,x,\te)$, where $\te$ are local coordinates on $\bdry$. It was computed by Graham and
Lee~\cite[Proposition~2.10]{GL} that the action of the differential of the map~\eqref{Q}
on a symmetric tensor $h=h_0+ h'$, with $h_0\in\cV_0^g$ and $h'\in
\cV_1^g\oplus \cV_2^g\oplus\cV_3^g$, is of the form
\begin{equation}\label{DQ}
(DQ)_g(h)=-\frac12\big( (\square_g-2n)h_0+(\square_g+2)h'\big)+ x\,\cL^1h\,,
\end{equation}
where $\square_g h_{\mu\nu}:=g^{\la\rho}\nabla_\la\nabla_\rho
h_{\mu\nu}$ is the wave operator on tensor fields and we henceforth use the notation $\cL^m$  for a matrix
$m^{\text{th}}$ order linear differential operator in the conormal derivatives
$(x\pd_x,\pd_\te,\pd_t)$ whose coefficients are smooth functions of
$(x,\Bg,\pd\Bg,\Bga_0,\pd \Bga_0,\pd^2 \Bga_0)$ up to $x=0$. In the
case $m=1$, the operator will not depend on $\pd^2\Bga_0$.

In particular, the part with second-order derivatives of the
linearized operator $(DQ)_g$ is the same as that of the wave operator
$-\frac 12\square_g$.  Regarding the terms that are most singular at
$x=0$, it was shown in~\cite[Proposition~2.7]{GL} that, in
terms of the coordinates $(t,x,\te)$, the Laplacian on a symmetric
tensor $h$ can be expanded in $x$ as
\begin{multline*}
\square_gh_{\mu\nu}= \big(
x^2\pd_x^2+(1-n)x\pd_x \big)h_{\mu\nu} +2 h_{\la\rho}\Bg^{\la\la}\Bg^{\rho\rho}x_\la x_\rho \Bg_{\mu\nu}\\
-(n+1)\big(h_{\mu\la}\Bg^{\la\rho}x_\rho x_\nu+
h_{\nu\la}\Bg^{\la\rho}x_\rho x_\mu\big) +2\Bg^{\la\rho}h_{\la\rho}x_\mu x_\nu
\\+
x\cL^1(h)_{\mu\nu}+ x^2\cL^2(h)_{\mu\nu}\,.
\end{multline*}

To further simplify this expression, let us define the quadratic polynomials
\[
p_j(s):=-\frac12\bigg(s-\frac n2+\al_j\bigg)\bigg(s-\frac n2-\al_j\bigg)\,,
\]
where $0\leq j\leq 3$ and $\al_j$ are the constants
\begin{equation}\label{alj}
\al_0:=\frac{\sqrt{n(n+8)}}2\,,\quad \al_1:=\frac n2\,,\quad
  \al_2:=\al_0\,,\quad \al_3:=\frac{\sqrt{n(n+4)}}2\,.
\end{equation}
In the following lemma, which we borrow from~\cite[Lemma 2.9]{GL} with a minor
change the notation, we use the subspaces $\cV_j^g$ to effectively
diagonalize $(DQ)_g$ up to terms that are smaller at $x=0$. Here
$p_j(x\pd_x)$ has the obvious meaning.

\begin{lemma}[\cite{GL}]\label{L.DQVj}
If $h\in \cV_j^g$, we have that
\[
(DQ)_g(h)_{\mu\nu}=x^{-2}\,p_j(x\pd_x) \Bh_{\mu\nu}+ x^{-1}(\cL^1 \Bh) _{\mu\nu}+ (\cL^2 \Bh)_{\mu\nu}\,.
\]
\end{lemma}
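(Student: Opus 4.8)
The plan is to take the two structural expansions already displayed in the text---the general formula $(DQ)_g(h)=-\frac12\big((\square_g-2n)h_0+(\square_g+2)h'\big)+x\,\cL^1 h$ for $h=h_0+h'$ with $h_0\in\cV_0^g$, together with the $x$-expansion of $\square_g h_{\mu\nu}$---and to specialize them to a tensor $h$ lying in a single summand $\cV_j^g$. The first observation is that each $\cV_j^g$ is preserved by the ``algebraic part'' of $\square_g$, i.e.\ by the zeroth-order-in-$x$ tensorial operator obtained by dropping the $x\cL^1$ and $x^2\cL^2$ remainders from the displayed expansion of $\square_g h_{\mu\nu}$. Concretely, for $h$ of the form prescribed in Proposition~\ref{P.Vj} one substitutes the explicit ansatz ($\vp\Bg_{\mu\nu}$; a trace- and $x$-free tensor; $\vp[(n+1)x_\mu x_\nu-\Bg_{\mu\nu}]$; $a_\mu x_\nu+a_\nu x_\mu$ with $a$ orthogonal to $dx$) into the four algebraic terms
\[
2h_{\la\rho}\Bg^{\la\la}\Bg^{\rho\rho}x_\la x_\rho\,\Bg_{\mu\nu}-(n+1)\big(h_{\mu\la}\Bg^{\la\rho}x_\rho x_\nu+h_{\nu\la}\Bg^{\la\rho}x_\rho x_\mu\big)+2\Bg^{\la\rho}h_{\la\rho}x_\mu x_\nu
\]
and uses $|dx|^2_{\Bg}=1$ on the boundary (Definition~\ref{D.waAdS}), so that these terms collapse to a constant multiple of $h_{\mu\nu}$ itself plus an $x\cL^0$ error coming from $|dx|^2_{\Bg}=1+\cO(x)$. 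This is a finite, purely linear-algebraic computation, case by case in $j$, and it is exactly the content of \cite[Lemma~2.9]{GL}; I would simply reproduce the four short computations, being careful that the Lorentzian signature changes none of the contractions involved since only $\Bg$, $\Bg^{-1}$ and $dx$ enter.

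Having done that, the algebraic eigenvalue in the $j$-th sector is some constant $c_j$, and the radial part $x^2\pd_x^2+(1-n)x\pd_x$ survives unchanged; combining them one gets that the zeroth-and-second-order part of $\square_g$ restricted to $\cV_j^g$ equals $(x\pd_x)^2-n(x\pd_x)+c_j$ acting componentwise, modulo $x\cL^1+x^2\cL^2$. Then I would feed this into \eqref{DQ}: on $\cV_0^g$ one has the shift $-2n$, on $\cV_1^g,\cV_2^g,\cV_3^g$ the shift $+2$, producing $-\frac12\big[(x\pd_x)^2-n(x\pd_x)+c_j+\epsilon_j\big]$ with $\epsilon_0=-2n$ and $\epsilon_1=\epsilon_2=\epsilon_3=2$. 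A direct check---completing the square---shows that $-\frac12\big[(x\pd_x)^2-n(x\pd_x)+c_j+\epsilon_j\big]=p_j(x\pd_x)$ precisely when $\big(\tfrac n2\big)^2-(c_j+\epsilon_j)=\al_j^2$ with the $\al_j$ of \eqref{alj}; this fixes the values $c_0=c_2$ (giving $\al_0=\al_2=\tfrac12\sqrt{n(n+8)}$), $c_1$ (giving $\al_1=\tfrac n2$) and $c_3$ (giving $\al_3=\tfrac n2+1$), and one verifies numerically that the $c_j$ produced by the case-by-case contraction above indeed yield these. All lower-order contributions---the $x\cL^1$ in \eqref{DQ}, the $x\cL^1+x^2\cL^2$ remainders in the $\square_g$ expansion, and the $x\cL^0$ error from $|dx|^2_{\Bg}=1+\cO(x)$ multiplied by the algebraic terms---are manifestly of the form $x(\cL^1 h)_{\mu\nu}+x^2(\cL^2 h)_{\mu\nu}$, which is the claimed error. (One should note that, strictly, the coefficients of $\cL^m$ are smooth in $(x,\Bg,\pd\Bg)$ up to $x=0$, so absorbing things like $\Bg^{\la\rho}x_\la x_\rho-1$ into an $x\cL^0=x\cL^1$ term is legitimate.)

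The main obstacle, such as it is, is bookkeeping rather than conceptual: one must be scrupulous that the decomposition of an arbitrary $h\in\cV_j^g$ into ``the part acted on cleanly by the algebra'' plus ``$x$-small corrections'' is uniform, i.e.\ that the off-diagonal leakage between the $\cV_j^g$ caused by $|dx|^2_{\Bg}\neq1$ in the interior of $\cA$ really is $\cO(x)$ and not $\cO(1)$. This is guaranteed because on the conformal boundary $|dx|^2_{\Bg}=1$ exactly (part (ii) of Definition~\ref{D.waAdS}), so every discrepancy carries a factor of $x$; and the projections onto the $\cV_j^g$ are smooth functions of $(x,\Bg)$ up to the boundary since $dx\neq0$ there (as used in the proof of Proposition~\ref{P.Vj}). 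Because all of this is identical, line for line, to the Riemannian computation in \cite{GL}---the algebraic contractions only ever see $\Bg$, its inverse, and $dx$, never the signature---I would state the lemma with attribution to \cite[Lemma~2.9]{GL} and indicate that the proof is the substitution of Proposition~\ref{P.Vj}'s ansätze into the two displayed expansions above, followed by completing the square to recognize $p_j(x\pd_x)$, leaving the four routine contractions to the reader or to a short appendix cross-reference.
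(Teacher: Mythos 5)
Your proposal is correct and is exactly the argument the paper intends: the paper offers no proof of this lemma beyond the citation of \cite[Lemma~2.9]{GL}, and your plan --- substitute the four explicit forms of Proposition~\ref{P.Vj} into the algebraic part of the displayed expansion of $\square_g$, use $|dx|^2_{\Bg}=1+\cO(x)$ from Definition~\ref{D.waAdS} to collapse those terms to constant multiples of $h_{\mu\nu}$ modulo $x\cL^0$ errors, add the shifts $-2n$ and $+2$ from \eqref{DQ}, and complete the square against \eqref{alj} --- is precisely Graham and Lee's derivation, with the signature-independence of the contractions being the paper's own justification for importing it. One warning about the verification you assert but do not carry out: with the expansion of $\square_g h_{\mu\nu}$ exactly as displayed in this paper the four algebraic eigenvalues come out as $c_0=2$, $c_1=0$, $c_2=-2n$, $c_3=-(n+1)$, each larger by $2$ than the values $0$, $-2$, $-2n-2$, $-n-3$ required to reproduce the $\al_j$ of \eqref{alj}; the discrepancy traces to a $-2h_{\mu\nu}$ term present in \cite[Proposition~2.7]{GL} but dropped in this paper's transcription of that expansion, and once it is restored your completion of the square closes in all four cases.
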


We will also need some information on the second derivative
$(D^2Q)_g$, understood as a quadratic form. For our purposes, it
will be enough to have the following symbolic description of
$(D^2Q)_g(h)$, where we are not displaying indices for the ease of notation:

\begin{lemma}\label{L.D2Q}
The second derivative of $Q$ is of the form
\[
(D^2Q)_g(h)=\cO(1) \Bh\,\pd^2\Bh +\cO(1)\, \pd \Bh\, \pd\Bh +
\cO(x^{-1})\, \Bh\, \pd\Bh+\cO(x^{-2})\, \Bh\, \Bh\,.
\]
Here we are using the notation $\Bh:=x^2 h$ and each term $\cO(x^{-s})$
above stands
for a smooth function of $x$, $\Bg$, $\pd\Bg$ and
the derivatives of $\Bga_0$ up to order $s$.
\end{lemma}

\begin{proof}
Ignoring the indices, we can use Eqs.~\eqref{QDeg}
and~\eqref{Qsing} to symbolically write the structure of $Q(g)$ as 
\[
Q(g)=\Bg^{-1}\pd^2 \Bg+a_0(\Bg)\, \pd\Bg\, \pd\Bg+ \frac{a_1(\Bg)}x\, \pd\Bg+\frac{a_2(\Bg)}{x^2}\,,
\]
where $a_j(\Bg)$ stands for a smooth function of $x$, $\Bg$ and the
derivatives of $\Bga_0$ up to order $j$. Since 
\[
Q(g+\ep h)=Q(g)+\ep \, (DQ)_g(h)+ \frac12\ep^2(D^2Q)_g(h) +\cO(\ep^3)\,,
\]
an elementary computation using that 
\[
(\Bg+\ep \Bh)^{-1}=\Bg^{-1}-\ep \Bg^{-1}\Bh\Bg^{-1}+\ep^2\Bg^{-1}\Bh\Bg^{-1}\Bh\Bg^{-1}+\cO(\ep^3)
\]
readily yields the desired expression for $(D^2Q)_g$.
\end{proof}

We will also need the following elementary fact:

\begin{lemma}\label{L.log}
For any integers $\si\geq0$ and $s$ there is a polynomial $f$ of
degree $\si$ or $\si+1$ such that 
\[
p_j(x\pd_x)\big(x^sf(\log
x)\big)=x^s(\log x)^\si\,.
\]
Furthermore, $f$ has degree $\si+1$ if and only if $p_j(s)=0$.
\end{lemma}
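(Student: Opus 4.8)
The plan is to observe that $p_j(x\pd_x)$ acts on the space of ``generalized monomials'' $x^s(\log x)^k$ in a triangular fashion, and then invert this triangular action. First I would record the elementary computation that $x\pd_x$ applied to $x^s(\log x)^k$ gives $s\,x^s(\log x)^k + k\,x^s(\log x)^{k-1}$; iterating, any polynomial differential operator $P(x\pd_x)$ with constant coefficients sends $x^s(\log x)^k$ to $P(s)\,x^s(\log x)^k$ plus a linear combination of $x^s(\log x)^{k'}$ with $k' < k$. In particular, for the fixed exponent $s$, the operator $p_j(x\pd_x)$ preserves the finite-dimensional space $W_N := \myspan\{x^s, x^s\log x,\dots, x^s(\log x)^N\}$ and is represented there by an upper-triangular matrix whose diagonal entries are all equal to $p_j(s)$.

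Next I would split into the two cases according to whether $p_j(s)$ vanishes. If $p_j(s)\neq0$, the triangular matrix representing $p_j(x\pd_x)$ on $W_\si$ is invertible, so there is a unique $f$ of degree $\le\si$ with $p_j(x\pd_x)(x^sf(\log x)) = x^s(\log x)^\si$; moreover $f$ genuinely has degree $\si$ because the top coefficient of $f$ is $1/p_j(s)\neq0$. If instead $p_j(s)=0$, then the diagonal vanishes and $p_j(x\pd_x)$ lowers the $\log$-degree by at least one (in fact by exactly one, since $p_j$ has a simple zero at $s$ — the two roots $\tfrac n2\pm\al_j$ coincide only if $\al_j=0$, which never happens for the values in~\eqref{alj}, so $p_j'(s)\neq0$). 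Hence $p_j(x\pd_x)$ maps $W_{\si+1}$ onto $W_\si$ with one-dimensional kernel spanned by the constant $x^s$, and one can solve $p_j(x\pd_x)(x^sf(\log x)) = x^s(\log x)^\si$ with $f$ of degree exactly $\si+1$; the top coefficient of $f$ is $1/((\si+1)\,p_j'(s))\neq0$, confirming the degree count, and $f$ is unique up to adding a multiple of the constant.

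I do not anticipate a serious obstacle here — the statement is genuinely elementary once one thinks of $p_j(x\pd_x)$ as a triangular operator on the graded space of log-monomials. The only point requiring a moment's care is the ``if and only if'' in the degree claim: one must check both that $p_j(s)=0$ forces degree $\si+1$ (because the leading term cannot be produced in degree $\si$ when the diagonal is zero) and that $p_j(s)\neq0$ forces degree exactly $\si$ (so the degree never jumps when it need not). Both follow by tracking the leading coefficient through the triangular action, using in the degenerate case that the zero of $p_j$ at $s$ is simple. One could also give the whole argument by a direct induction on $\si$, peeling off the top log-power at each stage, which avoids even mentioning matrices; either presentation is short.
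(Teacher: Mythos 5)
Your proof is correct and rests on the same key computation as the paper's: the action of $p_j(x\pd_x)$ on the log-monomials $x^s(\log x)^k$ is triangular with diagonal entry $p_j(s)$ (this is the paper's formula~\eqref{formulalog}), and the zeros of $p_j$ are simple because $\al_j\neq0$, which is exactly the paper's remark that $2s_0+2-n\neq0$ at a root. The paper packages the inversion as a double induction on $s$ and $\si$ rather than as an upper-triangular matrix on $\myspan\{x^s(\log x)^k\}_{k\leq N}$, but the substance — including your leading-coefficient bookkeeping $1/p_j(s)$ in the nondegenerate case and $1/((\si+1)p_j'(s))$ in the degenerate one — is the same.
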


\begin{proof}
Since $p_j(0)\neq0$, it is clear that
\[
p_j(x\pd_x)\bigg(\frac1{p_j(0)}\bigg)=1\,.
\]
We now proceed by induction on $s$ and $\si$. Indeed, assume that the
statement holds true for all $s\leq s_0$ and $\si\leq \si_0$. The key
observation is that
\begin{multline}\label{formulalog}
p_j(x\pd_x)\big(x^s(\log x)^\si\big)=p_j(s) x^s(\log x)^\si-\frac {\si(2s+4-n)}2x^s(\log
x)^{\si-1}\\
-\frac{\si(\si-1)}2 x^s(\log x)^{\si-2}\,.
\end{multline}
If $p_j(s_0+1)\neq0$, by the induction hypothesis there is a
polynomial $F$ of degree at most $\si_0$ such that
\[
p_j(x\pd_x)\vp=x^{s_0+1}(\log x)^{\si_0}
\]
with
\[
\vp:=x^{s_0+1}\bigg(\frac{(\log x)^{\si_0}}{p_j(s_0+1)}+F(\log
x)\bigg) \,.
\]
On the other hand, if $p_j(s_0+1)=0$ we have that $s_0$ is $\frac n2\pm\al_j$, and in this case $2s_0+2-n$ is always nonzero. Hence
the induction hypothesis and the identity~\eqref{formulalog} ensure that
we can then take a polynomial $F$ of degree at most $\si_0$
such that 
\[
p_j(x\pd_x)\vp=x^{s_0+1}(\log x)^{\si_0}
\]
with
\[
\vp:=x^{s_0+1}\bigg(-\frac{2(\log x)^{\si_0+1}}{(\si_0+1)(2s_0+2-n)}+ F(\log
x)\bigg)\,.
\]
The same argument yields analogous functions $\psi$ with 
\[
p_j(x\pd_x)\psi=x^{s_0}(\log x)^{\si_0+1}
\]
and deals with the case of negative $s$, thereby completing the induction argument.
\end{proof}

Armed with these auxiliary results, we are now ready for the analysis
of the equation $Q(g)=0$ that we will carry out in
this section. For this we need to impose more stringent regularity assumptions on the
metric $\Bga_0$ than those in Section~\ref{S.modified}. Specifically, hereafter
we make the following regularity assumption:

\begin{assumption}[Regularity of the reference metric]
The metric $\Bga_0$ is of class $C^{n-1}\cap C^p\poly$ on $I\times\obola$.
\end{assumption}

Here $p\geq n-1$ a given integer and we recall that a function $h$ is
in $C^p\poly(I\cap\obola)$ (polyhomogeneous of class $C^p$) if it is of
class $C^p$ away from the boundary (say, on $I\times (M\backslash\cA)$) and a $C^p$ function of
$(t,x,\te,\log x)$ on a neighborhood of the
boundary, say $I\times\overline\cA$. The last condition means that in a small
neighborhood of
each point of $I\times\overline\cA$ there is a $C^p$ function $h'$ of
$n+2$ arguments such that
\[
h=h'(t,x,\te,\log x)\,.
\]
Since the pullback of the reference metric $\Bga_0$ to the boundary is
$\hg$, this regularity assumption implies that the boundary metric
$\hg$ must be of class $C^p(I\times\bdry)$.

To state the following theorem, we will introduce the space
$C^m_r(I\times\bola)$ of functions with $m+r$ continuous derivatives,
with the peculiarity that the last $r$ derivatives with respect to~$x$ are
regularized by multiplying by~$x$. This way, for instance, for all
$k,l,m\geq 1$ we have that
\begin{equation}\label{xlogxklm}
x^{m}\, (\log x)^l
\end{equation}
is in $C^{m-1}_k$ but not in $C^{m+k-1}$. To define the space $C^m_r(I\times\bola)$, we will
also use a
smooth nonnegative function $\chi_\cA$ of $x$ that
vanishes outside $I\times\cA$ and is equal to $1$ in a neighborhood of
$I\times\bdry$. With these objects at our disposal, we can now define
$C^m_r(I\times\bola)$ as the space of functions $\vp$
such that
\begin{equation}\label{Cmr}
\|\vp\|_{C^m_r(I\times\bola)}:=\|(1-\chi_\cA)\vp\|_{C^{m+r}(I\times\bola)}
 + \sum_{|\be|+j+k\leq r}\|(x\pd_x)^j\pd_t^k\pd_\te^\be(\chi_\cA\vp)\|_{C^m(I\times\bola)}
\end{equation}
is finite. The space $C^p_r(\bola)$ is defined analogously. (Of
course, the notation $\pd_\te^\be$ is somewhat heuristic as $\bdry$ is
not covered by a global chart. To define it rigorously, it is standard
that one can 
resort to either covering $\bdry$ with a fixed finite collection of
charts  and use a subordinate partition of unity, or to taking vector fields $X_1,\dots, X_M$ on $\bdry$ that span the whole
tangent space $T_p\bdry$ at each point $p\in\bdry$ and replace
$\pd_\te^\be\vp$ by
\[
X_1^{\be_1}\cdots X_M^{\be_M}\vp\,,
\]
with $|\be|=\be_1+\cdots +\be_M$. For notational simplicity, we will
stick to the notation $\pd_\te^\be$, which must be
interpreted in the aforementioned sense.)

We shall next present the main result of this section, which is a
procedure to obtain asymptotically anti-de Sitter metrics~$\ga$ that
satisfy the boundary condition $(j_{I\times\bdry})^* \Bga=\hg$ and
for which $Q(\ga)$ is suitably small. 
To state the theorem, we need to
introduce some notation. Given nonnegative integers $s$ and $\si$, we will say that a
symmetric tensor field $q$, of class $C^p$ in the interior of $I\times\bola$, is in $\cO_j(x^s\log^{\leq \si}x)$ if it can be
written in $\cA$ as
\[
q=x^s\sum_{\si'=0}^\si(\log x)^{\si'} B^{\si'}\,,
\]
where $B^{\si'}$ is a smooth symmetric tensor field in $I\times\obola$
satisfying the bounds
\[
\|B^{\si'}\|_{C^k(I\times\bola)}\leq
F_{k}\big(\|\Bga_0\|_{C^{k'}_{r'}(I\times\bola)}\big)
\]
for each $k\leq p-j$, where $k':=\min\{k+j,n-2\}$,
$r':=\max\{0,k+j-n+2\}$ and $F_{k}$ is a polynomial with $F_{k}(0)=0$. Although we will not
say it explicitly hereafter, it is important that in all the terms of
the form $\cO_j(x^s\log^{\leq \si} x)$ that will appear
in this section, the coefficients of the corresponding polynomials
$F_{k}$ will be uniformly bounded in terms of the $C^{n-1}\cap
C^p_{p-n+1}$ norm of $\Bga_0$.

\begin{theorem}\label{T.peeling}
Let us take a nonnegative integer $n-1\leq l\leq p$ and a 
small real $\de>0$. Then there is a weakly
asymptotically AdS metric $\ga_l$ on $I\times\bola$ of the form
\begin{equation*}
\ga_{l}=\sum_{k=0}^l \cO_k(x^{k-2}\log^{\leq \si_k}x)\,,
\end{equation*}
where each nonnegative integer $\si_k$ is zero for $k\leq n-1$,
such that:
\begin{enumerate}
\item The pullback to the boundary of $\Bga_l:=x^2\ga_l$ is
\[
(j_{I\times\bdry})^* \Bga_l=\hg\,.
\]

\item The metric $\ga_l$ is uniformly close to $\Bga_0$ in the sense
  that
\[
\|\Bga_l-\Bga_0\|_{L^\infty}<\de
\]
and furthermore
\begin{equation*}
\|\Bga_l\|_{C^{n-1}_{p-n+1}(I\times\bola)}<C
\end{equation*}
with a constant that depends only on $\|\Bga_0\|_{C^{n-1}_{p-n+1}}$
and~$\de$.

\item The metric $\ga_l$ is a solution of the modified Einstein
  equation almost to order $l-1$ in the sense that
\begin{equation*}
Q(\ga_l)=\cO_{l+1}(x^{l-1}\log^{\leq \si'_l}x)+ \cO_{l+2}(x^l\log^{\leq \si'_l}x)\,,
\end{equation*}
where $\si'_k$ is a nonnegative integer that is equal to zero for all
$k\leq n-1$. 
\end{enumerate}
\end{theorem}


\begin{proof}
Proposition~\ref{P.Qsing} trivially proves the result for
$l=0$. To see how things work for $l=1$, let us write the
$\cO_1(x^{-1})$ terms that appear in
\[
Q(\ga_0)=\cO_1(x^{-1})+ \cO_2(1)
\]
as
\[
\cO_1(x^{-1})=\frac{H_1}x+\cO_1(1)\,,
\]
where the tensor field $H_1$ is defined in terms of this quantity as
\begin{equation}\label{H1def}
H_1:=E\big(x \,\cO_1(x^{-1})|_{x=0}\big)
\end{equation}
and is $\cO_1(1)$. Here $E$ denotes the extension operator that we
introduced in Equation~\eqref{ga0}, and for the time being we will restrict
our attention to small values of~$x$.

Let us now use the direct sum decomposition of $\cS^2$ proved in
Proposition~\ref{P.Vj} to write in a unique way 
\[
H_1=\sum_{j=0}^3 H_{1j}\,,
\]
with $H_{1j}\in\cV_j^{\ga_0}$. We will take now
\[
\ga_1:=\ga_0-\sum_{j=0}^3 f_{1j}(x)H_{1j}
\]
with suitably chosen functions $f_{1j}(x)$. By Lemma~\ref{L.DQVj} and
Taylor's formula,
\begin{align*}
Q(\ga_1)&=Q(\ga_0)+(DQ)_{\ga}(\ga_1-\ga_0) +
I_1\\
& =x^{-2}\sum_{j=0}^3\bigg( x-p_j(x \pd_x)f_{1j}\bigg) H_{1j}
 + \cO_2(1)+(x\cL^1+ x^2\cL^2)(\ga_1-\ga_0)+I_1\,,
\end{align*}
where the error term is
\[
I_1:=\int_0^1(D^2Q)_{(1-s)\ga_0+s\ga_1}(\ga_1-\ga_0)\, ds\,.
\]
Since $p_j(-1)\neq0$, Lemma~\ref{L.log} ensures that we can take
functions $f_{1j}=\cO(x^{-1})$ (indeed, $f_{1j}(x)=x^{-1}/p_j(-1)$) such that
\[
p_j(x\pd_x)f_{1j}=x\,.
\]
Since $H_1$, in principle, is only defined in a neighborhood of the
boundary, we should include in $f_{1j}$ a suitable cut-off function,
which we henceforth omit for the ease of notation.
In any case, with this choice of $f_{1j}$ and Lemma~\ref{L.D2Q}, we obtain that
the error term is controlled by
\[
I_1=\cO_2(1)+\cO_3(x)\,,
\]
which immediately implies that
\[
Q(\ga_1)=\cO_2(1)+\cO_3(x)\,.
\]

The general case follows by an induction argument that also relies on
Taylor's formula and Lemmas~\ref{L.DQVj}--\ref{L.log}. To sketch the
proof, let us assume that the claim holds for all integers up to
$l-1$, with
\begin{equation*}
\Bga_{l-1}=\sum_{k=0}^{l-1}\cO_k(x^{k}\log^{\leq \si_k}x)
\end{equation*}
and $\si_k=0$ for all $k\leq n-2$.
To
prove it for $l$, we argue as above to write
\begin{equation}\label{refEq}
Q(\ga_{l-1})=x^{l-2}\sum_{j=0}^3\sum_{k=0}^{\si'_{l-1}}(\log x)^k H_{lkj}+ \cO_{l+1}(x^{l-1}\log^{\leq \si'_{l-1}}x)\,,
\end{equation}
with $H_{lkj}=\cO_{l}(1)$ a tensor field in
$\cV_j^{\ga_{l-1}}$ and $\si'_{l-1}$ an integer, related to
$\si_{l-1}$ and to the regularity of $\Bga_0$ up to the boundary,
which will be discussed later. Notice that $H_{lkj}$ can be assumed to be related to the
extension via the operator $E$ of a suitable tensor field defined on the
boundary, in an analogous fashion to~\eqref{H1def}. 

Lemma~\ref{L.log} allows us to take polynomials $f_{lkj}$, of
degree $k$ if $p_j(l)\neq0$ and $k+1$ otherwise, so that
\[
p_j(x\pd_x)\big(x^{l}f_{lkj}(\log x)\big)=x^{l}(\log x)^k\,.
\]
If we now set
\[
\Bga_{l}:=\Bga_{l-1}-x^{l}\sum_{j=0}^3\sum_{k=0}^{\si_{l-1}}f_{lkj}(\log x) H_{lkj}\,,
\]
a computation analogous to the one for $\ga_1$ then shows that 
\[
Q(\ga_{l}) =\cO_{l+1}(x^{l-1}\log^{\leq \si_{l}}x)+ \cO_{l+2}(x^{l}\log^{\leq \si_{l}}x)
\]
for some integer $\si_{l}$. 

Let us now complete our analysis of the log terms that appear in this
computation by discussing the values that $\si_{l-1}'$ can take. We have seen that $\si_l=0$ as long as $\si_k'=0$ for all
$k\leq l-1$ and $p_j(l)\neq0$.  That is, log terms appear in $\ga_l$ either
through log terms the right hand side of Equation~\eqref{refEq} (where
they can come from log terms in $\Bga_{l-1}$ or from the reference
metric $\Bga_0$, which is in $C^{n-1}\cap C^p\poly$ and therefore such that its
first non-smooth term is of the form  $x^n\log x$) or due to the existence of
integer roots of a polynomial $p_j(s)$, as shown in
Lemma~\ref{L.log}. It follows from Equation~\eqref{alj} that the first
integer root of a polynomial $p_j(s)$ is $p_1(n)=0$, so log terms can
only appear at order $x^n\log x$ in $\Bga_l$ and we therefore get that
$\Bga_l$ is of class $C^{n-1}\cap C^p\poly$.

Since $\Bga_l-\Bga_0$ vanishes at $x=0$, it suffices to take the
support of the aforementioned cut-off functions to be small enough to
ensure that $\|\Bga_l-\Bga_0\|_{L^\infty}$ is as small as one wishes.
Besides, it is apparent from the construction that the tensor fields $H_{lkj}$ that
appear at the $l^{\text{th}}$ step of the induction that the
coefficients are bounded in terms of $\Bga_0$ and its
$l^{\text{th}}$ order derivatives
which yields the estimate 
\[
\|\Bga_l\|_{C^{n-1}_{p-n+1}}<C\,,
\]
with $C$ a constant that depends on $\|\Bga_0\|_{C^{n-1}_{p-n+1}}$.
Of course, the reason for
which in general we get this estimate in $C^{n-1}_{p-n+1}$ but not
in $C^p$ is the presence of log terms in the expression for $\Bga_l$ 
starting with $x^n\log x$. 
\end{proof}

\section{Setting the iteration}
\label{S.iteration}

Our goal in this section is to set up an iterative procedure that will
eventually lead to a solution of the equation $Q(g)=0$ with the
desired initial and boundary conditions. To this end, let us write the
solution as
\[
g=:\ga+ h\,,
\]
where
\[
\ga:= \ga_l
\]
is the metric constructed in Theorem~\ref{T.peeling} with some large
enough value of the parameter~$l$ that we will specify later. We will
also assume that the number~$p$ appearing the regularity assumption of
Section~\ref{S.peeling} is large enough. Intuitively, the weakly asymptotically AdS metric~$\ga$ is
the part of the metric that is ``large'' at the boundary and $h$ is ``smaller''. 

Let us recall from Equation~\eqref{QDeg} that one can write $Q(g)$ in local
coordinates as
\[
Q(g)=\widetilde P_gg+B(g)\,,
\]
where we define the $g$-dependent linear differential
operator~$\widetilde P_g$ as
\[
(\widetilde P_gg')_{\mu\nu}:=-\frac12 g^{\la\rho}\pd_\la\pd_\rho g_{\mu\nu}'
\]
and $B(g)$ depends on $g$ and quadratically on $\pd g$. Taylor's
formula ensures that
\begin{equation}\label{Qg1}
B(g)=B(\ga)+(DB)_\ga h-\tcE(h)\,,
\end{equation}
where the error term is
\begin{equation}\label{tcE}
\tcE(h):=-\int_0^1(D^2B)_{\ga+sh}(h)\, ds
\end{equation}
and the second order differential of $B$ is understood as a quadratic
form. The equation $Q(g)=0$ can then be written as
\begin{equation}\label{Qg2}
\widetilde P_gh+(DB)_gh+(\widetilde P_\ga\ga-\widetilde P_g\ga)+Q(\ga)-\tcE(h)=0\,.
\end{equation}

Let us now define a linear operator, depending on $g$, as
\[
T_gh:=-3h(\nabla^{(\ga)}x, \nabla^{(\ga)}x)\, \Bg\,,
\]
where $\nabla^{(\ga)}$ stands for the connection associated with the
metric~$\ga$. As easy computation shows that $T_g$ is the
differential of the function $g\mapsto \widetilde P_\ga\ga-\widetilde P_g\ga$ at $g=\ga$. Hence we will set
\begin{equation}\label{tcF}
\tcF(h):=T_gh+\widetilde P_g\ga-\widetilde P_\ga\ga\,,
\end{equation}
which, in view of~\eqref{Qg2}, allows us to write the equation
$Q(g)=0$ as
\[
\widetilde P_gh+(DB)_gh+T_gh=-Q(\ga)+\tcF(h)+\tcE(h)\,.
\]
Let us now define another $g$-dependent linear differential operator $P_g$
by setting
\[
\widetilde P_gh+(DB)_gh+T_gh=:x^{\frac n2+2}P_gu\,,
\]
where we have introduced the new unknown $u$ as
\[
h=:x^{\frac n2}u\,.
\]
Full details about the structure of the differential operator will be
given in Section~\ref{S.linear}.  In terms of~$u$, the equation
$Q(g)=0$ can be finally written as
\begin{equation}\label{EinsteinP}
P_gu=\cF_0+\cG(u)\,,
\end{equation}
where
\[
\cG(u):=\cF(u)+\cE(u)
\]
and
\[
\cF_0:=-x^{-\frac n2-2}Q(\ga)\,,\qquad \cF(u):=x^{-\frac
  n2-2}\tcF(h)\,,\qquad \cE(u):=x^{-\frac n2-2}\tcE(h)\,.
\]

In the forthcoming sections our objective will be to solve this equation using an iterative
procedure that will produce $u$ as the limit of a sequence $u^m$,
with $u^1:=0$ and 
\begin{equation*}
P_{g^{m}}u^{m+1}=\cF_0+\cG(u^{m})\,.
\end{equation*}
Of course, here $g^m:= \ga+ x^{\frac n2}u^m$ and the initial conditions
that we need to impose are
\begin{equation*}
u^{m+1}|_{t=0}=u_0\,,\qquad \pd_t u^{m+1}|_{t=0}=u_1\,,
\end{equation*}
where we have set
\begin{equation}\label{u01}
u_j:=x^{-\frac n2}(g_j-\pd_t^j\ga|_{t=0})
\end{equation}
for each nonnegative integer~$j$, with $g_j:=\pd_t^jg|_{t=0}$. As we will see, the compatibility conditions of the
initial and boundary data boil down to assumption that a certain
number of the functions $u_j$ fall off fast enough at $x=0$ to be in a
suitable space of square-integrable functions over~$M$. Since $g_j$ is
just a time derivative of the metric at $t=0$, and therefore
determined by the initial datum of the problem (that is, a Riemannian
metric on $M$ and a second fundamental form satisfying the constraint
equations), and $\ga$ was determined by algebraically solving the
Einstein equations to a certain order near the boundary, this just
means that the formal series expansions for the solution that we get
from the initial and boundary data must be compatible to a certain
order. In the terminology of~\cite{Friedrich}, this is means imposing corner
conditions to a finite order.

\section{Adapted Sobolev spaces}
\label{S.Sobolev}

In this section we will introduce some twisted Sobolev spaces that are
adapted to the AdS geometry near the conformal boundary. They will be
key in our derivation of the estimates that will allow us to prove the
convergence of the iteration presented in the previous
section. Specifically, we will consider two kinds of adapted Sobolev spaces,
$\bH^m_\al$ and $\cH^m$, as well as certain modifications of them,
$\bH^{m,r}_\al$ and $\cH^{m,r}$, that play a role somewhat similar to that of
the spaces $C^m_r$ introduced in~\eqref{Cmr}. The first kind of
adapted spaces depends on a parameter~$\al$ that in our applications
will ultimately be one of the quantities~$\al_j$ defined
in~\eqref{alj}, so {\em we will assume throughout that $\al>1$}\/ without
further mention. The
properties of these spaces for $\al<1$ are quite different, as
discussed in~\cite{JMPA}.

To define the spaces $\bH^m_\al$, let us begin by introducing the twisted derivative with parameter $\al$ as
\[
\bD_{x,\al}\vp :=\pd_x\vp+\frac\al x\vp \,.
\]
Its formal adjoint in the Hilbert space
\begin{equation}\label{bLx}
\bLx:=L^2((0,\infty),x\, dx)
\end{equation}
is 
\[
\bD^*_{x,\al}\vp:=-\pd_x\vp +\frac{\al-1} x\vp \,,
\]
and we will set
\begin{equation}\label{D(m)}
\bD_{x,\al}^{(k)}\vp:=\begin{cases}
(\bD_{x,\al}^*\bD_{x,\al})^\frac k2\vp & \text{if } k \text{ is even},\\[1mm]
\bD_{x,\al} (\bD_{x,\al}^*\bD_{x,\al})^{\frac{k-1}2}\vp & \text{if } k \text{ is odd},
\end{cases}
\end{equation}
with the proviso that $\bD_{x,\al}^{(0)}\vp:=\vp$.

The twisted Sobolev space $\bH^m_\al\equiv \bH^m_\al(\bola)$ is defined as
follows. Let us suppose that the function~$u$ is supported in a small
neighborhood of the boundary~$\bdry$, which we will take as
\[
\cA:=\big\{(x,\te)\in(0,a)\times\bdry\big\}\,.
\]
We can then define its $\bH^m_\al$
norm as
\begin{equation*}
\|u\|_{\bH^m_{\al}(\cA)}^2:=\sum_{j+|\be|\leq m}\int_{\bdry}\int_0^{a}
|\bD_{x,\al}^{(j)}\pd_\te^\be u|^2\, x\, dx\, d\te
\end{equation*}
where $d\te$ is the canonical measure on the sphere and the twisted
derivative acts on~$u$ in the obvious way. Using a suitable
cutoff function that is equal to $1$ in a neighborhood of the boundary
and vanishes outside $\cA$, for a function $u$ defined on the ball
we can then set
\begin{equation}\label{bH}
\|u\|_{\bH^m_\al}:=\|\chi u\|_{\bH^m_{\al}(\cA)}+ \|(1-\chi)u\|_{H^m(\bola)}\,,
\end{equation}
where $H^m$ is the usual Sobolev space. The space
$\bH^m_\al$ can then be defined as the closure in this norm of the space of smooth
functions on~$M$ of compact support, the definition being also
applicable to tensor-valued functions using standard arguments.  For
$m=0$ the norm, which does not depend on~$\al$, will be simply
denoted by $\|u\|_{\bL}$ or occasionally by $\|u\|$.

For real $s>0$, we can use interpolation to define the space
$\bH^s_\al\equiv \bH^s_\al(\bola)$. Equivalently, since $\bD_{x,\al}^*\bD_{x,\al}$ is
an essentially self-adjoint operator in $L^2(\RR^+,x\, dx)$ with
the domain $C^\infty_0(\RR^+)$, we can write
\begin{equation}\label{Hkball}
\|u\|_{\bH^s_\al}:=\big\|\La_\al^s(\chi
u)\big\|_{\bL} + \|(1-\chi)u\|_{H^s(\bola)}\,,
\end{equation}
where
\begin{equation}\label{Laal}
\La_\al^s:=(1-\De_{\bdry}+\bD_{x,\al}^*\bD_{x,\al})^{s/2}
\end{equation}
is defined using the spectral theorem.  As we did in~\eqref{Cmr}, we can also consider the space with~$m$
derivatives as above and~$r$
``regularized'' derivatives. For this we use
the norm that is defined as
\[
\|u\|_{\bH^{m,r}_\al}:= \sum_{j+|\be|\leq r}\|(x\,\pd_x)^j\pd_\te^\be
(\chi u)\|_{\bH^m_\al} + \|(1-\chi)u\|_{H^{m+r}(\bola)}\,.
\]

Closely related scales of Sobolev spaces are $\cH^m\equiv
\cH^m(\bola)$ and $\cH^{m,r}\equiv\cH^{m,r}(\bola)$, which do not
depend on any parameters and are weighted variations of the spaces typically
considered in the theory of differential edge operators (see
e.g.~\cite{Mazzeo}). They are respectively defined as the closure of
$C^\infty_0(\bola)$ in the norm
\begin{align*}
\|u\|_{\cH^m}&:= \sum_{j+|\be|\leq m}
\|x^{j-m}\pd_x^j\pd_\te^\be (\chi u)\|_{\bL} + \|(1-\chi)u\|_{H^{m+r}(\bola)}\,,\\
\|u\|_{\cH^{m,r}}&:= \sum_{j=0}^r\|x^j
u\|_{\cH^{m+j}}\,,
\end{align*}
in each
case. Notice that these norms are constructed by including in each
derivative a singular weight that depends on the number of
$x$-derivatives that one is taking. These spaces can also
be defined for non-integer values using interpolation or, denoting by
$\pd_x^*:=-\pd_x-1/x$ the formal adjoint of~$\pd_x$ with respect to
the $\bLx$ product,
directly through the formula
\begin{align}\label{cHs}
\|u\|_{\cH^{s}}:=\bigg\|\bigg(\frac{1-\De_{\bdry}}{x^2}+\pd_x^* \pd_x\bigg)^{s/2}(\chi
u)\bigg\|_{\bL} + \|(1-\chi)u\|_{H^s(\bola)} \,.
\end{align}
In particular, this ensures that the
usual interpolation formulas are valid for these scales of Sobolev
spaces.

We shall need estimates relating the various adapted Sobolev spaces
that we have introduced. A simple observation is the following, which
show how multiplication by powers of~$x$ can help us redistribute
the ``standard'' and ``regularized'' derivatives in the
spaces~$\bH^{m,r}_\al$ and~$\cH^{m,r}$:

\begin{proposition}\label{P.power}
Given nonnegative integers $m$, $r$ and an integer
$l\in[-m,r]$, we have the inequality
\[
\|x^lu\|_{\cH^{m,r}}\leq C \|u\|_{\cH^{m+l,r-l}}\,.
\]
\end{proposition}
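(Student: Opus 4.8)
The plan is to reduce to the case of functions supported in the annulus $\cA$ (where the weighted structure lives) and then to argue by a careful bookkeeping of $x$-weights. First I would localize: write $u=\chi u+(1-\chi)u$. For the cutoff piece $(1-\chi)u$, which is supported away from the boundary, $x$ is bounded above and below by positive constants, so multiplication by $x^l$ is a bounded operation on the ordinary Sobolev space $H^{m+r}$, and the two sides of the claimed inequality are comparable up to a constant; this contributes nothing essential. So it suffices to prove $\|x^l v\|_{\cH^{m,r}}\leq C\|v\|_{\cH^{m+l,r-l}}$ for $v:=\chi u$ supported in $\cA$, where by definition
\[
\|v\|_{\cH^{m,r}}=\sum_{j=0}^r\|x^j v\|_{\cH^{m+j}}\,,\qquad
\|w\|_{\cH^k}\sim\sum_{i+|\be|\leq k}\|x^{i-k}\pd_x^i\pd_\te^\be w\|_{\bL}\,.
\]

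Next I would expand both sides in terms of the elementary seminorms $\|x^{i-k}\pd_x^i\pd_\te^\be w\|_{\bL}$ and match them up. Concretely, $\|x^l v\|_{\cH^{m,r}}=\sum_{j=0}^r\|x^{j+l}v\|_{\cH^{m+j}}$, and each $\|x^{j+l}v\|_{\cH^{m+j}}$ is a sum over $i+|\be|\leq m+j$ of $\|x^{i-(m+j)}\pd_x^i\pd_\te^\be(x^{j+l}v)\|_{\bL}$. The key computation is the Leibniz expansion
\[
\pd_x^i\big(x^{j+l}v\big)=\sum_{a=0}^i\binom{i}{a}\,\frac{(j+l)!}{(j+l-a)!}\,x^{j+l-a}\,\pd_x^{\,i-a}v\,,
\]
(valid as long as $j+l\geq 0$, which holds since $j\geq 0$ and $l\geq -m\geq\cdots$; the borderline case $j+l<0$ can only occur when $l<0$ and then one simply notes $x^{j+l-a}\pd_x^{i-a}v$ still makes sense because $v$ vanishes to infinite order at $x=0$, or one reindexes $j$ to start from $-l$). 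Substituting, the general term is a constant times $\|x^{i-(m+j)+j+l-a}\pd_x^{i-a}\pd_\te^\be v\|_{\bL}=\|x^{(i-a)-(m+l)}\pd_x^{i-a}\pd_\te^\be v\|_{\bL}$, and since $(i-a)+|\be|\leq i+|\be|\leq m+j\leq m+r$ while also $(i-a)+|\be|\leq i+|\be|-a$ so that after using $j\leq r$ one checks $(i-a)+|\be|\leq m+(r-l)+(\text{something}\leq 0)$ — the upshot is that every term appearing is one of the seminorms defining $\|v\|_{\cH^{m+l,r-l}}=\sum_{j'=0}^{r-l}\|x^{j'}v\|_{\cH^{m+l+j'}}$ (take $j'=i-a-(i-a+|\be|\text{-adjusted index})$; more simply, set $k':=(i-a)+|\be|$ and $j':=k'-(m+l)$ when $k'\geq m+l$, and absorb the case $k'<m+l$ into the lowest space). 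Thus each seminorm of the left side is dominated by a finite sum of seminorms of the right side, with constants depending only on $m$, $r$, $l$, $n$.

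The main obstacle — and the only place where real care is needed — is verifying that the index $j'=k'-(m+l)$ produced by this expansion always lands in the admissible range $[0,r-l]$ for the norm $\cH^{m+l,r-l}$, i.e.\ that no term requires ``too many'' regularized $x$-derivatives or a space of negative order. This is exactly where the hypothesis $l\in[-m,r]$ is used: $l\geq -m$ guarantees $m+l\geq 0$ so the target spaces $\cH^{m+l+j'}$ have nonnegative order, and $l\leq r$ guarantees $r-l\geq 0$ so the outer sum defining $\cH^{m+l,r-l}$ is nonempty and long enough to accommodate the shifted indices. A clean way to package the index check is to observe that the operation ``multiply by $x^l$'' shifts the bidegree $(m,r)\mapsto(m+l,r-l)$ while preserving the \emph{total} order $m+r$, and that the defining seminorms of $\cH^{m,r}$ are, up to the $x$-weight bookkeeping above, invariant under this shift; I would phrase the argument so that this invariance is manifest, then the inequality follows by comparing finite sums of seminorms term by term. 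One should also note the symmetric statement $\|x^{-l}(x^l u)\|_{\cH^{m,r}}\leq C\|x^l u\|_{\cH^{m+l,r-l}}$ shows the inequality is sharp (an equivalence of norms on the relevant subspace), but only the stated one-sided bound is needed here.
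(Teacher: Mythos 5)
Your overall strategy --- localize to $\cA$, expand the seminorms, apply Leibniz, and match weights --- is exactly what the paper's one-sentence proof (``expand the various terms appearing in the definitions of the norm and use some elementary algebra'') intends. But the elementary algebra is where your argument breaks: the key exponent is computed incorrectly. From the seminorm $\|x^{i-(m+j)}\pd_x^i\pd_\te^\be(x^{j+l}v)\|_{\bL}$, the Leibniz term of index $a$ carries the weight $x^{i-(m+j)+(j+l)-a}=x^{(i-a)-(m-l)}$, \emph{not} $x^{(i-a)-(m+l)}$ as you wrote. The atoms you actually produce are therefore $\|x^{i''-(m-l)}\pd_x^{i''}\pd_\te^{\be}v\|_{\bL}$ with $i''+|\be|\leq m+r$, i.e.\ seminorms of order $m-l$, whereas the seminorms of $\cH^{m+l,r-l}$ have order $m+l$; the two differ by the factor $x^{2l}$. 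When $l\geq 0$ this factor is bounded on the support and can be absorbed into the constant, so your proof can be repaired in that range. When $l<0$ the factor $x^{2l}$ is unbounded as $x\to0$ and the matching genuinely fails.

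For negative $l$ this is not a repairable bookkeeping slip: the inequality as stated is then false. Take $m=1$, $r=0$, $l=-1$ (allowed, since $l\in[-m,r]$) and let $u$ be a bump supported in $\{\ep\leq x\leq 2\ep\}$. Then $\|x^{-1}u\|_{\cH^{1,0}}\geq\|x^{-2}u\|_{\bL}\sim\ep^{-1}$, while $\|u\|_{\cH^{0,1}}\sim\|u\|_{\bL}+\|x\pd_xu\|_{\bL}+\|\pd_\te u\|_{\bL}\sim\ep$. The statement that your (corrected) computation actually proves --- and the one the paper invokes later, in the proof of Corollary~\ref{C.bH} --- is $\|x^lu\|_{\cH^{m,r}}\leq C\|u\|_{\cH^{m-l,r+l}}$ for $-r\leq l\leq m$: the atoms of $\|x^lv\|_{\cH^{m,r}}$ have order $m-l$ and total derivative count at most $m+r$, which are precisely the atoms of $\cH^{m-l,r+l}$. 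Your paragraph on ``where the hypothesis $l\in[-m,r]$ is used'' only checks that the target indices $m+l$ and $r-l$ are nonnegative; it never compares weight exponents, which is why the sign error goes undetected, and the closing appeal to a ``manifest invariance'' of the bidegree shift is not a proof --- as the counterexample shows, $(m,r)\mapsto(m+l,r-l)$ is the wrong shift.
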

\begin{proof}
It is enough to expand the various terms appearing in the definitions
of the norm and use some elementary algebra.
\end{proof}

To explore the properties of these spaces we will make use of the
integral operators
\begin{subequations}\label{defAA*}
\begin{align}
A_\al\vp(x)&:=x^{-\al}\int_0^x y^\al \vp(y)\, dy\,,\\
A_\al^*\vp(x)&:=x^{\al-1}\int_x^1 y^{1-\al}\vp(y)\, dy\,,
\end{align}
\end{subequations}
which act on functions of one variable and will play an essential role
in the rest of this section. Notice that these operators are right inverses of $\bD_{x,\al}$ and $\bD_{x,\al}^*$ in the
sense that
\[
\bD_{x,\al}(A_\al\vp)=\bD_{x,\al}^*(A_\al^*\vp)=\vp\,;
\]
in particular, $A_\al^*$ is the adjoint of $A_\al$ in
$\bLx$. Obviously $A_\al,A_\al^*$ also act on functions defined on
$\cA$. In Appendix~\ref{A.AA} we record some important properties of
these operators, extracted from~\cite{JMPA}.

A simple but important estimate is the following, which gives an $L^\infty$
bound for functions belonging to an adapted Sobolev space. Notice that,
contrary to what happens in the usual Sobolev embedding theorem, we
are not asking for the square-integrability of $\frac n2+\ep$ derivatives
but actually of $\frac{n+1}2+\ep$:

\begin{theorem}\label{T.Morrey}
Let $u\in\bH^{1,r}_{\al}$ with $r>\frac {n-1}2$. Then 
we have the pointwise estimate in the ball
\[
\|u\|_{L^\infty}\leq C\, \|u\|_{\bH^{1,r}_\al} \,.
\]
\end{theorem}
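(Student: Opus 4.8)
The plan is to reduce the pointwise bound to a one-dimensional Sobolev embedding in the $x$ variable, gaining the extra half-derivative from the degenerate weight $x\,dx$ that appears in the measure defining $\bLx$. Since $u$ is, after cutting off, supported in the annulus $\cA$ where $(x,\te)$ are good coordinates and $x\in(0,1)$, it suffices to bound $\|\chi u\|_{L^\infty(\cA)}$; away from the boundary the cutoff $(1-\chi)u$ lies in $H^{1+r}(\ball)$ with $1+r>\frac{n+1}2$, so the ordinary Sobolev embedding handles it. Thus assume $u$ is supported in $\cA$.

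First I would treat the spherical directions by the standard Sobolev embedding on $\es$: writing $u(x,\cdot)$ as a function on the $(n-1)$-sphere for each fixed $x$, and using that $r>\frac{n-1}2$ spherical derivatives control the sup norm on $\es$, we get
\[
\|u(x,\cdot)\|_{L^\infty(\es)}^2\leq C\sum_{|\be|\leq r}\int_{\es}|\pd_\te^\be u(x,\te)|^2\,d\te\,.
\]
It then remains to bound the right-hand side, integrated appropriately, as a function of $x\in(0,1)$ — more precisely to show $\sup_{x}\|u(x,\cdot)\|_{L^\infty(\es)}^2\leq C\|u\|_{\bH^{1,r}_\al}^2$. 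For this I would run the one-dimensional argument on each $v(x):=\pd_\te^\be u(x,\te)$ (viewing $\te$ as a parameter), exploiting that $v$ is the right inverse data for $\bD_{x,\al}$: since $\bD_{x,\al}(A_\al\psi)=\psi$, one can reconstruct $v$ from $\bD_{x,\al}v$ via $A_\al$ (up to boundary terms controlled by the vanishing of $v$ near $x=0$ for functions in the closure of $C^\infty_0$), and Appendix~\ref{A.AA} supplies the mapping properties of $A_\al$ on the weighted spaces. The key one-dimensional inequality is of the form $\|v\|_{L^\infty(0,1)}^2\leq C(\|v\|_{\bLx}^2+\|\bD_{x,\al}v\|_{\bLx}^2)$, valid precisely because $\al>1$ makes the weight $x$ harmless near $0$; this is where the ``extra half derivative'' is paid for, and it is the analogue of $H^1(\RR)\hookrightarrow L^\infty$ but with the degenerate measure forcing us to spend $r>\frac{n-1}2$ rather than $\frac{n-2}2$ tangential derivatives so that after the trade-off the total count is $1+r>\frac{n+1}2$.

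The main obstacle I anticipate is the one-dimensional weighted embedding itself: with the measure $x\,dx$, the naive estimate $v(x)^2=-2\int_x^1 v\,v'\,dy\leq 2\|v\|_{L^2(dy)}\|v'\|_{L^2(dy)}$ is not directly available because the norms in $\bLx$ carry the weight $x$, and one must check that the twisted derivative $\bD_{x,\al}=\pd_x+\al/x$ absorbs the resulting weight factors without loss — concretely, one writes $x^{1/2}v$ (or an appropriate power) and differentiates, so that $\frac{d}{dx}\big(x\,v(x)^2\big)$ produces exactly the combination $v\,\bD_{x,\al}v$ up to a favorable sign coming from $\al>1$, and integrating from $0$ to $1$ gives the bound, the boundary term at $x=0$ vanishing by the support/closure hypothesis and the one at $x=1$ being controlled by the $H^1$ part. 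Once this weighted Poincaré–Sobolev step is in place, combining it with the spherical embedding and summing over $|\be|\leq r$ yields $\|u\|_{L^\infty}\leq C\|u\|_{\bH^{1,r}_\al}$, and reinstating the cutoff and the region away from $\pd\ball$ completes the proof.
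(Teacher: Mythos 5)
Your proposal is correct and is essentially the paper's proof: both rest on reconstructing $u$ in the $x$-variable as $u=A_\al(\bD_{x,\al}u)$ and invoking the $\bLx\to L^\infty_x$ boundedness of $A_\al$ (properties (i) and (iii) of Theorem~\ref{T.AA*}), combined with the Sobolev embedding on $\es$ using the $r>\frac{n-1}2$ tangential derivatives; the only difference is that you apply the spherical embedding before the one-dimensional one, which is immaterial. One caveat: your alternative derivation of the one-dimensional bound by integrating $\tfrac{d}{dx}\big(x\,v(x)^2\big)$ only yields $x_0\,v(x_0)^2\leq C\|v\|_{\bLx}\|\bD_{x,\al}v\|_{\bLx}$, which degenerates as $x_0\to0$, so the route through $A_\al$ and the Cauchy--Schwarz computation of Appendix~\ref{A.AA} is the one that actually closes.
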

\begin{proof}
By the definition of the norm and the Sobolev embedding, it is
obviously enough to prove the result for~$u$ supported in~$\ocA$. But for a.e.~$(x,\te)$ in
$\cA$ we then have
\begin{align*}
|u(x,\te)|&=\big|A_\al^*(\bD_{x,\al}u)(x,\te)|\\
&\leq C \|\bD_{x,\al}u(\cdot,\te)\|_{\bLx}\\
&\leq C \|\bD_{x,\al} u \|_{\bLx H^r_\te}\\
&\leq C\|u\|_{\bH^{1,r}_\al}\,,
\end{align*}
where $H^r_\te\equiv H^r(\bdry)$ is the Sobolev space of functions on $\bdry$
with $r$ square-integrable derivatives and to pass to the first, second and third lines we have
respectively used the properties~(i) and~(iii) in Theorem~\ref{T.AA*}
and the Sobolev embedding. The theorem then follows.
\end{proof}

\begin{corollary}\label{C.Cmr}
For any $\rho>\frac {n-1}2$, $\|u\|_{C^m_r(\bola)}\leq C \|u\|_{\cH^{m+1,r+\rho}}$.
Furthermore, we have the bound
\[
\|x^{-m}(x\,\pd_x)^j \pd_\te^\be u\|_{L^\infty(\cA)}\leq C\|u\|_{\cH^{m+1,r+\rho}}
\]
for all indices with $j+|\be|\leq m+r$.
\end{corollary}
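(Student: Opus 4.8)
The plan is to deduce Corollary~\ref{C.Cmr} from the pointwise bound of Theorem~\ref{T.Morrey} by a bookkeeping argument on how the $\cH^{m,r}$ and $\bH^{m,r}_\al$ norms are built. First I would reduce to the case of $u$ supported in the annulus $\cA$, since away from the boundary $\chi_\cA$ vanishes and the left-hand sides reduce to ordinary $C^{m+r}(\ball)$ norms, which are controlled by $\|(1-\chi)u\|_{H^{m+r+\rho}(\ball)}$ through the classical Sobolev embedding (here $\rho>\frac{n-1}2>\frac n2$ is more than enough). So from now on everything is localized in $\cA$ with coordinates $(x,\te)$.

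Next I would establish the second (sharper) inequality, since the first is an immediate consequence of it: taking $i=j=0$ and summing over $|\be|\le m+r$ recovers exactly the sum of sup-norms of $\pd_\te^\be u$ with $|\be|\le m+r$, which bounds $\|u\|_{C^m_r}$ once one also accounts for the regularized $x$-derivatives $(x\pd_x)^j$ — and these are precisely the terms $i=0$, $j\le r$, $|\be|\le m$ in the displayed estimate. For the displayed estimate itself, fix indices with $i\le m$ and $i+j+|\be|\le m+r$. The idea is to write $x\pd_x = x\,\pd_x$ and note that, up to lower-order commutator terms with the weights $x^s$, the operator $x^{-m+i}(x\pd_x)^j\pd_x^i\pd_\te^\be$ is a combination of operators of the form $x^{-m'}\pd_x^{i'}\pd_\te^{\be'}$ with $i'\le m$ (the highest $x$-derivative count never exceeds $i+j\le m+r$, but the crucial point is that each time we convert an $x\pd_x$ that would raise the derivative count past $m$ we can instead absorb a power of $x$). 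One then applies Theorem~\ref{T.Morrey} with $r$ replaced by $\rho$ (so $\rho>\frac{n-1}2$ is exactly the hypothesis needed there) to the function $v:=x^{-m'}\pd_x^{i'}\pd_\te^{\be'}u$, getting $\|v\|_{L^\infty}\le C\|v\|_{\bH^{1,\rho}_\al}$. Finally one checks directly from the definitions of $\bH^{1,\rho}_\al$ and $\cH^{m+1,r+\rho}$ — expanding $\bD_{x,\al}$, $(x\pd_x)^a$, $\pd_\te^b$ and comparing singular weights $x^{k-(m+1)-\cdots}$ against $x^{j-m}$ — that $\|v\|_{\bH^{1,\rho}_\al}\le C\|u\|_{\cH^{m+1,r+\rho}}$; this last comparison is exactly the kind of ``elementary algebra'' used to prove Proposition~\ref{P.power}, combined with the observation that $\bD_{x,\al}=\pd_x+\frac\al x$ differs from $\pd_x$ by the multiplication operator $\frac\al x$, which only improves the weight count.

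The main obstacle is the weight bookkeeping in the previous paragraph: one must verify that for every admissible triple $(i,j,\be)$ the operator $x^{-m+i}(x\pd_x)^j\pd_x^i\pd_\te^\be$, after commuting all powers of $x$ to the left and all $x\pd_x$'s into genuine $\pd_x$'s, produces only terms that the $\cH^{m+1,r+\rho}$ norm controls — i.e.\ that the number of plain $x$-derivatives falling on $u$ is at most $m+1$ in the ``un-regularized'' slot and the rest are compensated by exact powers of $x$ matching the singular weights in the definition of $\cH^{m+1,r+\rho}$. The inequality $i\le m$ and $i+j+|\be|\le m+r$ are tuned precisely so that this works: $i$ controls the genuine $x$-derivatives (hence needs $i+1\le m+1$, leaving one derivative for the $\bD_{x,\al}$ applied in Theorem~\ref{T.Morrey}), while $j$ and $|\be|$ can be absorbed into the ``$r$ regularized'' budget plus the $\rho$ extra tangential derivatives demanded by the embedding. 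Once this combinatorial check is done, the proof is a one-line invocation of Theorem~\ref{T.Morrey} together with the definition-chasing inequality, so I would present the argument compactly, spelling out one representative term (say $j=0$, so the operator is $x^{-m+i}\pd_x^i\pd_\te^\be$) in detail and indicating that the general $x\pd_x$ case follows by the same commutator manipulation.
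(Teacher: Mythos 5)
Your proposal matches the paper's proof: the paper likewise deduces the corollary by applying Theorem~\ref{T.Morrey} (with $\rho>\frac{n-1}{2}$ playing the role of $r$ there) to the function $x^{-m+i}(x\,\pd_x)^j\pd_x^i\pd_\te^\be u$, the only content being the weight/derivative bookkeeping showing this function lies in $\cH^{1,\rho}$ whenever $u\in\cH^{m+1,r+\rho}$ — exactly the combinatorial check you describe. The paper states this membership without spelling out the expansion of $(x\pd_x)^j$ into $x^a\pd_x^a$ terms, so your more explicit verification is a faithful (if longer) rendering of the same argument.
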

\begin{proof}
It stems Theorem~\ref{T.Morrey} and the
fact that $x^{-m}(x\,\pd_x)^j \pd_\te^\be u\in \cH^{1,\rho}$
for the above range of indices whenever $u\in\cH^{m+1,r+\rho}$.
\end{proof}

The connection between the spaces~$\bH^{m,r}_\al$ and~$\cH^{m,r}$ is
subtler. Of course, the estimate
\begin{equation}\label{relationSobolev}
\|u\|_{\bH^{m,r}_\al}\leq C\|u\|_{\cH^{m,r}}
\end{equation}
follows from an elementary computation. That for some range of the
parameters there is a converse to this inequality, so that the
norms~$\bH^{m,r}_\al$ and~$\cH^{m,r}$ are equivalent, is more
sophisticated.  The following theorem is the partial converse to the
inequality~\eqref{relationSobolev} that we need:

\begin{theorem}\label{T.bH}
For any $k\leq m$, if $\al>k-1$,
\[
\|u\|_{\cH^{k,r+m-k}}\leq C\|u\|_{\bH^{m,r}_\al}\,.
\]
In particular, both norms are equivalent if $\al>k-1$.
\end{theorem}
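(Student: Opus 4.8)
The plan is to reduce the full estimate to the corresponding one-dimensional statement near the boundary, since away from the boundary both norms reduce to standard Sobolev norms on a compact manifold and the inequality is trivial. So I would fix a cutoff~$\chi$ supported in~$\cA$ and work with $u=\chi u$ supported in the annulus, expanding the $\cH^{k,r+m-k}$ norm as a sum over terms $\|x^j\,(\text{derivatives})\|_{\cH^{k+j}}$, and each of those in turn as a sum of terms of the form $\|x^{i-k-j}\pd_x^i\pd_\te^\be(x^j w)\|_{\bL}$ with $i\leq k+j$. The first step is to rewrite the operator $\pd_x^*\pd_x$ appearing in the definition~\eqref{cHs} of the $\cH^s$ norm in terms of the twisted Laplacian $\bD_{x,\al}^*\bD_{x,\al}$: since $\pd_x^*\pd_x = \bD_{x,\al}^*\bD_{x,\al} - \al(\al-1)/x^2$, controlling $\cH^k$ amounts to controlling $k$ powers of $\bD_{x,\al}^*\bD_{x,\al}$ together with~$k$ extra powers of~$1/x$, i.e.\ $\|x^{-k}u\|_{\bL}$, $\|x^{-k+1}\bD_{x,\al}u\|_{\bL}$, etc.

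The heart of the argument is then the following one-variable claim: if $\al>k-1$ then for $0\le i\le k$,
\[
\|x^{i-k}\bD_{x,\al}^{(i)}\vp\|_{\bLx}\le C\sum_{i'=0}^{m}\|\bD_{x,\al}^{(i')}\vp\|_{\bLx}\qquad (k\le m),
\]
and more generally with $r$ spectator $\te$-derivatives. This is exactly where the integral operators $A_\al,A_\al^*$ from~\eqref{defAA*} and their mapping properties recorded in Theorem~\ref{T.AA*} of Appendix~\ref{A.AA} enter. The mechanism is an iterated Hardy-type inequality: writing $\vp$, or rather $\bD_{x,\al}^{(i)}\vp$, as $A_\al$ or $A_\al^*$ applied to one more twisted derivative and using the boundedness of $x^{-1}A_\al$ (respectively $x^{-1}A_\al^*$) on $\bLx$, which holds precisely under a lower bound on~$\al$ — this is the source of the hypothesis $\al>k-1$, each of the $k$ steps "costing" one unit and requiring $\al$ to exceed the running index. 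After $k$ such steps one has traded $k$ powers of~$1/x$ for $k$ more twisted derivatives, landing in $\bH^m_\al$; the leftover $m-k$ derivatives are absorbed into the $r+(m-k)$ regularized slots on the right-hand side, which is why the regularity index on the left drops from $m$ to~$k$ and the regularized index rises correspondingly. The bookkeeping of how $\pd_x$ versus $x\pd_x$ derivatives, and the powers $x^j$ coming from Proposition~\ref{P.power}-type manipulations, interleave with the twisted derivatives is routine but must be done carefully using the commutation relations $[\,x\pd_x,\bD_{x,\al}^*\bD_{x,\al}\,]=-2\bD_{x,\al}^*\bD_{x,\al}+\lot$; the $\te$-derivatives commute with everything and just come along for the ride.

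The main obstacle I anticipate is precisely tracking the sharp threshold on~$\al$ through the iteration: one must make sure that at the $j$-th application of the Hardy bound the relevant parameter (which is $\al$ shifted by the number of twisted derivatives already present, or by the power of~$x$ currently in play) still lies in the range where $x^{-1}A_\al$ or $x^{-1}A_\al^*$ is bounded on~$\bLx$, and that this is implied by $\al>k-1$ rather than some stronger condition. A secondary, purely technical nuisance is that $\bD_{x,\al}^{(i)}$ for odd versus even~$i$ involves $\bD_{x,\al}$ versus $\bD_{x,\al}^*$, so the two integral operators $A_\al$ and $A_\al^*$ must be alternated correctly, and their mapping properties between weighted $\bLx$ spaces (items~(i)--(iii) of Theorem~\ref{T.AA*}) invoked in the right order. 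Once the one-variable estimate is established with the correct constants, reassembling the full $n$-dimensional inequality — summing over $\te$-derivatives, reinstating the cutoff, and handling the region away from the boundary by ordinary elliptic Sobolev estimates — is immediate, and the final sentence "both norms are equivalent" then follows by combining this with~\eqref{relationSobolev}.
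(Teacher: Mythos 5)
Your overall route coincides with the paper's: reduce to the annulus $\cA$ with $\te$ as a spectator variable, invert the twisted derivatives with $A_\al$ and $A_\al^*$, and redistribute powers of $x$ via the Hardy-type mapping properties of Theorem~\ref{T.AA*}. There is, however, one substantive omission. The identity $\vp=A_\al(\bD_{x,\al}\vp)$ holds exactly (Theorem~\ref{T.AA*}(iii)), but the inversion of $\bD_{x,\al}^*$ is only valid modulo a homogeneous solution: $\vp=A_\al^*(\bD_{x,\al}^*\vp)+f(\te)\,x^{\al-1}$, which is item (iv) of Theorem~\ref{T.AA*} --- the one item you do not invoke. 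Iterating the inversion $m$ times therefore does not give $u=A_\al^{(m)}(\bD_{x,\al}^{(m)}u)$, but rather
\[
u=A_\al^{(m)}(\bD_{x,\al}^{(m)}u)+\sum_{0<j\leq m/2}x^{\al+2(j-1)}f_j(\te)\,,\qquad \|f_j\|_{L^2_\te}\leq C\|u\|_{\bH^m_\al}\,,
\]
and the extra terms must be estimated separately. This matters because it is exactly there, and not in the Hardy iteration, that the hypothesis $\al>k-1$ enters: one needs $x^{\al-k}$ to lie in $\bLx$, which forces $\al>k-1$. By contrast, the operators $\frac1x A_\al$ and $\frac1x A_\al^*$ are bounded on $\bLx$ under the standing assumption $\al>1$ alone, and the redistribution of powers is carried out with the fixed identities $\|x^{-l}A_\al^*\vp\|_{\bL}\leq C\|x^{1-l}\vp\|_{\bL}$ and $x^{-l}A_\al\vp=\frac1x A_{\al+l-1}(x^{1-l}\vp)$; so the mechanism you describe would not by itself produce the threshold $\al>k-1$, and your ``main obstacle'' is resolved only once the homogeneous terms are accounted for. (A minor side remark: with the paper's conventions one has $\pd_x^*\pd_x=\bD_{x,\al}^*\bD_{x,\al}-\al^2/x^2$, not $-\al(\al-1)/x^2$; this does not affect the structure of your argument.)
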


\begin{proof}
Since $u\in \bH^{m,r}_\al$ if and only if $(x\,
\pd_x)^j\pd_\te^\be u\in\bH^m_\al$ for all $j+|\be|\leq r$, it is clearly enough to prove that
\[
\|u\|_{\cH^{k,m-k}}\leq C\|u\|_{\bH^{m}_\al}
\]
whenever $\al>k-1$. There is no loss of generality in proving the
result for functions supported in~$\cA$, since away from the boundary
both norms are equivalent.

With $m=1$, it suffices to see that one can write
\[
u=A_\al(\bD_{x,\al}u)
\]
as a consequence of Theorem~\ref{T.AA*} and that, due to this theorem,
\[
\bigg\|\frac ux\bigg\|_{\bL}\leq C\|\bD_{x,\al}u\|_{\bL}\leq C\|u\|_{\bH^1_\al}\,.
\]
Hence
\[
\|\pd_xu\|_{\bL}=\bigg\|\bD_{x,\al}u-\al\frac ux\bigg\|_{\bL}\leq
\|\bD_{x,\al}u\|_{\bL}+\al\bigg\|\frac ux\bigg\|_{\bL}\leq  C\|u\|_{\bH^1_\al}\,,
\]
as we wanted to prove.

Let us now consider the case $m=2$. A moment's thought reveals that it
is enough to keep track of derivatives with respect to $x$ in the argument, which is
what we will do here, because we have that $\pd_\te^\be u\in \bH_\al^{m-|\be|}$.
Hence let us start by using Theorem~\ref{T.AA*} to write
\[
\bD_{x,\al}u=A^*_\al(\bD_{x,\al}^{(2)}u)+ x^{\al-1}f_1(\te)\,,
\]
where $f_1(\te)$ is a function on the sphere satisfying
$\|f_1\|_{L^2_\te}\leq C\|u\|_{\bH^2_\al}$. Here we are using the
notation $L^2_\te\equiv L^2(\bdry)$. Again by Theorem~\ref{T.AA*}, this implies
\[
u=A_\al^{(2)}(\bD_{x,\al}^{(2)}u)+cx^\al f_1(\te)\,,
\]
where $c$ is a constant and we are using the notation
\[
A_{\al}^{(l)}\vp:=\begin{cases}
(A_\al^*A_\al)^{\frac l2}\vp & \text{if } l \text{ is even},\\[1mm]
A_\al(A_\al^*A_\al)^{\frac{l-1}2}\vp & \text{if } l \text{ is odd}.
\end{cases}
\]
The desired estimates follow from this formula and the properties of
the operators $A_\al$ and $A_\al^*$ listed in Theorem~\ref{T.AA*}. In
order to see this, we start by noticing that
\begin{align}
\bigg\|\frac{A_\al^{(2)}\vp}{x^2}\bigg\|_{\bL}&=\bigg\|\frac
1{x^{\al+2}}\int_0^xy^\al A_\al^*\vp(y)\, dy\bigg\|_{\bL}\notag\\
&=\bigg\|\frac 1xA_{\al+1}\bigg(\frac{A^*_\al\vp}x\bigg)\bigg\|_{\bL}\notag\\
&\leq C\bigg\|\frac{A^*_\al\vp}x\bigg\|_{\bL}\notag\\
&\leq C\|\vp\|_{\bL}\,,\label{distribute}
\end{align}
which readily yields
\begin{align*}
\bigg\|\frac u{x^2}\bigg\|_{\bL}&\leq
\bigg\|\frac{A_\al^{(2)}(\bD_{x,\al}^{(2)}u)}{x^2}\bigg\|_{\bL}+ |c|
\|x^{\al-2}f_1(\te)\|_{\bL}\\
&\leq C\|\bD_{x,\al}^{(2)}u\|_{\bL}+|c|\|x^{\al-2}\|_{\bLx}\|f_1\|_{L^2_\te}\,,\\
&\leq C\|u\|_{\bH^2_\al}
\end{align*}
provided $\al>1$, which is the condition for $x^{\al-2}$ to be in
$\bLx$. If $\al\in(0,1]$, one can easily fix the argument by multiplying by a
factor of~$x$, which yields the estimate
\[
\|u\|_{\cH^{1,1}}\leq C\|u\|_{\bH^2_\al}
\]
for $\al$ in this range.  A similar argument shows that
\begin{align*}
\bigg\|\frac{\pd_x u}x\bigg\|_{\bL}&\leq
\bigg\|\frac{A_\al^*(\bD_{x,\al}^{(2)}u)}x\bigg\|_{\bL}+C\bigg\|\frac
u{x^2}\bigg\|_{\bL}\leq C\|u\|_{\bH^2_\al}\,,\\
\|\pd_x^2u\|_{\bL}&\leq \|\bD_{x,\al}^{(2)}u\|+C\bigg\|\frac{\pd_x
  u}x\bigg\|_{\bL}+C\bigg\|\frac u{x^2}\bigg\|_{\bL}\leq C\|u\|_{\bH^2_\al}
\end{align*}
provided $\al>1$. This proves the claim for $m=2$. 

The general case follows by induction using the same argument using that if $u\in\bH^m_\al$, one can write it as
\[
u=A_\al^{(m)}(\bD_{x,\al}^{(m)}u)+\sum_{0<l\leq m/2}x^{\al+2(j-1)}f_j(\te)\,,
\]
with $\|f_j\|_{L^2_\te}\leq C\|u\|_{\bH^m_\al}$. As before, the
constraint on~$\al$ appears from the fact that, for $u$ to be in
$\cH^{k,j}$, $x^{\al-k}$ must be in $\bLx$, which forces
$\al>k-1$. The only aspect that is slightly different than above is
that the way in which the powers of~$x$ must the distributed when we
have an expression of the form $x^{-l}A_\al^{(l)}$ is by recursively
using the formulas
\[
\|x^{-l}A^*_\al\vp\|_{\bL}\leq
C\|x^{1-l}\vp\|_\bL\,,\qquad
x^{-l}A_\al\vp=\frac1xA_{\al+l-1}(x^{1-l}\vp)\,.
\]
\end{proof}

Combining Theorem~\ref{T.bH} with Proposition~\ref{P.power} we arrive
at the following useful

\begin{corollary}\label{C.bH}
If  $\al>m-l-1$,
\[
\|u\|_{\cH^{m,r}}\leq C\|x^lu\|_{\bH^{m,r}_\al}\,.
\]
\end{corollary}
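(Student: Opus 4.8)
The plan is to deduce this directly from the two facts already in hand: the partial converse embedding of Theorem~\ref{T.bH}, which controls an $\cH$-norm by a $\bH^{m,r}_\al$-norm at the cost of lowering the number of ``standard'' derivatives, and the bookkeeping inequality of Proposition~\ref{P.power}, which lets one exchange a power of~$x$ for a shift between standard and regularized derivatives. The point of the corollary is simply to repackage Theorem~\ref{T.bH} in the form in which it will be applied in Section~\ref{S.linear}, where the subspaces $\cV_j^g$ carry different weights and one needs to absorb a factor $x^l$.

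First I would apply Proposition~\ref{P.power} to the function $v:=x^lu$ with the exponent $-l$ in place of $l$ (legitimate as long as $-l$ lies in the admissible range $[-m,r]$, i.e. $-r\leq l\leq m$), which gives
\[
\|u\|_{\cH^{m,r}}=\|x^{-l}v\|_{\cH^{m,r}}\leq C\,\|v\|_{\cH^{m-l,\,r+l}}=C\,\|x^lu\|_{\cH^{m-l,\,r+l}}\,.
\]
Then I would invoke Theorem~\ref{T.bH} with the parameter choice $k:=m-l$, applied to the function $x^lu\in\bH^{m,r}_\al$: with this choice the regularized index produced by the theorem is $r+m-k=r+l$, exactly the one appearing above, and the hypothesis $\al>k-1$ becomes precisely $\al>m-l-1$, the assumption of the corollary. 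This yields
\[
\|x^lu\|_{\cH^{m-l,\,r+l}}\leq C\,\|x^lu\|_{\bH^{m,r}_\al}\,,
\]
and chaining the two displays gives the claim.

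The only thing that needs attention is the bookkeeping of indices: one must check that $k=m-l$ is a nonnegative integer with $k\leq m$ and that $r+l\geq0$, so that both $\cH^{m-l,r+l}$ and the instance of Theorem~\ref{T.bH} make sense, and that $-l\in[-m,r]$ so that Proposition~\ref{P.power} applies. All of these are automatic from the fact that $l$ is an integer in the relevant range together with $\al>m-l-1$, which in particular forces $l\leq m$ (recall $\al>1$). There is no genuine analytic difficulty here—the substance is entirely in Theorem~\ref{T.bH}—so the ``hard part'' is nothing more than verifying that these parameter constraints are met; I would state them explicitly at the start of the proof and then let the two estimates compose.
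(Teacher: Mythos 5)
Your argument is correct and is essentially the paper's own proof: Proposition~\ref{P.power} to transfer the power of $x$, followed by Theorem~\ref{T.bH} with $k=m-l$, whose hypothesis $\al>k-1$ is exactly $\al>m-l-1$. One small correction to your closing remark, though: $\al>m-l-1$ does \emph{not} force $l\leq m$ (for $l>m$ it is just a weaker, eventually vacuous, lower bound on $\al$), so the restriction $0\leq l\leq m$ needed for $k=m-l$ to be an admissible index in Theorem~\ref{T.bH} must be imposed separately, which is what the paper does by declaring at the outset that it suffices to consider $l\leq m$.
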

\begin{proof}
It is enough to consider $l\leq m$. We then have
\begin{align*}
\|x^lu\|_{\cH^{m,r}}\leq C\|u\|_{\cH^{m-l,r+l}}\leq C\|u\|_{\bH^{m-l,r+l}_\al}\leq C\|u\|_{\bH^{m,r}_\al}\,,
\end{align*}
where we have used Theorem~\ref{T.bH} to pass to the second inequality.
\end{proof}

\section{Nonlinear estimates for adapted Sobolev spaces}
\label{S.nonlinear}

We shall next provide estimates that help us deal with nonlinear
functions of elements of an adapted Sobolev space. To obtain estimates for products of functions in adapted
Sobolev spaces, a basic result will be the following. To state it, we
will use the notation
\begin{equation}\label{cDkbe}
\cD_{k,\be}:= (x\pd_x)^k\pd_\te^\be\,.
\end{equation}

\begin{theorem}\label{T.multilinear}
Given $r>\frac{n-1}2$, consider functions $w_1,\dots, w_{m-1}\in
\cH^{1,r}$ and $u\in \cH^{0,r}$, which we can assume to be supported
in~$\cA$. Then, given multiindices with
\[
\sum_{i=1}^{m} (k_i+|\be_i|)\leq r\,,
\]
we have that
\[
\|(\cD_{k_1,\be_1}w_1)\cdots (\cD_{k_{m-1},\be_{m-1}}w_{m-1})\,
    (\cD_{k_{m},\be_{m}}u)\|_\bL\leq C\|u\|_{\cH^{0,r}}\prod_{i=1}^{m-1}
    \|w_i\|_{\cH^{1,r}}\,.
\]
\end{theorem}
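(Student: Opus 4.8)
The plan is to prove this by a Moser-type iteration argument, distributing derivatives among the factors using Hölder's inequality in a form adapted to the measure $x\,dx\,d\te$, where the ``middle'' factors carry $L^\infty$ bounds and the extreme ones carry $L^2$ bounds. First I would reduce to the case where all but one of the derivative-loaded factors lie in $L^\infty$: the product $(\cD_{k_1,\be_1}w_1)\cdots(\cD_{k_m,\be_m}u)$ is estimated in $\bL$ by putting the factor with the largest total order $k_i+|\be_i|$ (say the one involving $u$, after relabeling, or a $w_i$ if that one dominates) in $L^2$ and all the others in $L^\infty$. Since $\sum_i(k_i+|\be_i|)\leq r$, each of the $L^\infty$-factors has order strictly less than $r$, with slack at least the order of the retained factor; this slack is exactly what is needed to invoke the pointwise bound. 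Concretely, Corollary~\ref{C.Cmr} (via Theorem~\ref{T.Morrey}) gives, for $\rho>\frac{n-1}2$, that $\|\cD_{k,\be}w\|_{L^\infty}\leq C\|w\|_{\cH^{1,k+|\be|+\rho}}\leq C\|w\|_{\cH^{1,r}}$ as soon as $k+|\be|+\rho\leq r$, i.e.\ as soon as $k+|\be|\leq r-\rho$. The arithmetic that makes this work is that $r>\frac{n-1}2$ lets us choose $\rho$ with $\frac{n-1}2<\rho\leq r-\max_i(k_i+|\be_i|)$ whenever that maximum is at most $r-\rho$; this is the standard ``high-low'' splitting where at most one index can exceed $r-\rho$.

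The key steps, in order, would be: (i) fix the factor of maximal derivative order, call it $\cD_{k_*,\be_*}v_*$ with $v_*\in\{w_1,\dots,w_{m-1},u\}$, and observe $\sum_{i\neq *}(k_i+|\be_i|)\leq r-(k_*+|\be_*|)$; (ii) choose $\rho\in(\frac{n-1}2, r-\sum_{i\neq *}(k_i+|\be_i|)\,]$ if the remaining sum is small enough, or more carefully handle the borderline case by absorbing; (iii) apply Hölder with one $L^2$ and $m-1$ factors in $L^\infty$ with respect to $x\,dx\,d\te$:
\[
\|(\cD_{k_1,\be_1}w_1)\cdots(\cD_{k_m,\be_m}u)\|_\bL\leq \|\cD_{k_*,\be_*}v_*\|_\bL\prod_{i\neq *}\|\cD_{k_i,\be_i}v_i\|_{L^\infty}\,;
\]
(iv) bound $\|\cD_{k_*,\be_*}v_*\|_\bL\leq C\|v_*\|_{\cH^{0,r}}$ (if $v_*=u$) or $\leq C\|v_*\|_{\cH^{1,r}}$ (if $v_*$ is some $w_i$, using that one derivative of regularity is to spare) directly from the definition of the $\cH^{m,r}$ norm, since $\cD_{k,\be}=(x\pd_x)^k\pd_\te^\be$ is built precisely from the conormal derivatives appearing in that norm; (v) bound each $\|\cD_{k_i,\be_i}v_i\|_{L^\infty}\leq C\|v_i\|_{\cH^{1,r}}$ via Corollary~\ref{C.Cmr}.

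The main obstacle I anticipate is the borderline case where more than one factor has derivative order close to $r$, or where $v_*=u$ has order $k_*+|\be_*|$ so large that $r-\sum_{i\neq*}(k_i+|\be_i|)$ leaves no room above $\frac{n-1}2$ for $\rho$. In that situation one cannot put all the other factors simultaneously in $L^\infty$ with the crude Corollary~\ref{C.Cmr}; instead I would interpolate, distributing derivatives more evenly by a Gagliardo--Nirenberg-type inequality in the adapted spaces. Since $u\in\cH^{0,r}$ only (no ``vertical'' regularity), the retained $\bL$-factor must be a $\cD_{k,\be}u$ with $k+|\be|$ possibly as large as $r$, which forces all $w_i$ factors to be undifferentiated or nearly so — but then $\|\cD_{0,0}w_i\|_{L^\infty}=\|w_i\|_{L^\infty}\leq C\|w_i\|_{\cH^{1,r}}$ by Theorem~\ref{T.Morrey} directly, with room to spare since $r>\frac{n-1}2$. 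So the genuinely delicate regime is intermediate orders on several $w_i$'s simultaneously, which I would handle by an induction on $m$ combined with the product estimate for two factors (itself proved by the high-low split above), iterating: peel off one $w_i$ at a time, using $\cH^{1,r}$ as an algebra-like structure under the constraint $r>\frac{n-1}2$. This reduces everything to the bilinear case, which is the clean $L^2\times L^\infty$ estimate already described.
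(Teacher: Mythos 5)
Your proposed ``high--low'' splitting --- one factor in $\bL$ and all the others in $L^\infty$ of the full space --- does not survive the derivative count, and the fixes you sketch for the borderline cases do not rescue it. Two concrete problems. First, $u$ only lies in $\cH^{0,r}$, so it has no pointwise bound whatsoever (Theorem~\ref{T.Morrey} needs one twisted $x$-derivative); hence $\cD_{k_m,\be_m}u$ must \emph{always} be the $\bL$-factor, regardless of which factor carries the most derivatives. Second, once $u$ is the $\bL$-factor, the constraint only gives $\sum_{i\le m-1}(k_i+|\be_i|)\leq r$, and putting $\cD_{k_i,\be_i}w_i$ in $L^\infty$ via Corollary~\ref{C.Cmr} costs $k_i+|\be_i|+\rho$ derivatives of $w_i$ with $\rho>\frac{n-1}2$; since $r$ is only assumed to exceed $\frac{n-1}2$, this fails as soon as some $k_i+|\be_i|>r-\frac{n-1}2$, which is a tiny threshold. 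In particular your ``clean bilinear base case'' already breaks: for $\|(\cD_{k_1,\be_1}w_1)\,u\|_{\bL}$ with $k_1+|\be_1|=r$, neither factor admits an $L^\infty$ bound, so the $L^2\times L^\infty$ estimate is unavailable and the induction on $m$ has nothing to stand on. The appeal to a Gagliardo--Nirenberg interpolation in the adapted spaces is exactly the missing ingredient, not a routine step.

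What the paper does instead is an anisotropic, mixed-norm version of your idea. Each $w_j$-factor is put in $L^\infty$ \emph{in $x$ only}, via the one-dimensional bound $\|\vp\|_{L^\infty_x}\leq C\|\bD_{x,\al}\vp\|_{\bLx}$ (Theorem~\ref{T.AA*}); this spends the single vertical derivative available in $\cH^{1,r}$ and none of the angular regularity. This reduces the product to $\int_{\es}\prod_j V_j^2\,d\te$ with $V_j(\te)=\|\bD_{x,\al}\cD_{k_j,\be_j}w_j(\cdot,\te)\|_{\bLx}$ and $V_m(\te)=\|\cD_{k_m,\be_m}u(\cdot,\te)\|_{\bLx}$, i.e.\ to a purely angular problem on $\es$. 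Each $V_j$ lies in $H^{r-k_j-|\be_j|}(\es)$, hence in $L^{p_j}(\es)$ by Sobolev embedding with $\frac1{p_j}=\frac12-\frac{r-k_j-|\be_j|}{n-1}$, and the generalized H\"older inequality applies provided $\sum_j 2/p_j\leq 1$; verifying this condition is precisely where $\sum_i(k_i+|\be_i|)\leq r$ and $r>\frac{n-1}2$ enter quantitatively. This fractional sharing of the \emph{angular} regularity among the factors is what replaces your all-or-nothing $L^2$/$L^\infty$ dichotomy, and it is the step your proposal is missing.
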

\begin{proof}
Notice that for any $\al>1$ we have
\begin{align}
\Bigg\|\bigg(\prod_{j=1}^{m-1}\cD_{k_j,\be_j}w_j\bigg)&
\cD_{k_{m},\be_{m}}u\bigg\|_{\bL}^2=\int
\bigg(\prod_{j=1}^{m-1}(\cD_{k_j,\be_j}w_j)^2\bigg)\,(\cD_{k_{m},\be_{m}}
u)^2\, x\, dx\, d\te\notag\\
&\leq \int
\bigg(\prod_{j=1}^{m-1}\sup_{x'}|\cD_{k_j,\be_j}w_j(x',\te)|^2\bigg)\,(\cD_{k_{m},\be_{m}}
u)^2\, x\, dx\, d\te\notag\\
&\leq \int
\bigg(\prod_{j=1}^{m-1}\|\bD_{x,\al}\cD_{k_j,\be_j}w_j(\cdot,\te)\|_{\bLx}
\bigg)^2\,(\cD_{k_{m},\be_{m}} u)^2\, x\, dx\, d\te\notag\\
&\leq \int_{\bdry} \prod_{j=1}^{m} V_j^2\, d\te\,,\label{intVj2}
\end{align}
where we have defined
\[
V_{m}:=\|\cD_{k_{m},\be_{m}} u\|_{\bLx}\qquad \text{and}\qquad V_j:=\|\bD_{x,\al}\cD_{k_j,\be_j}w_j(\cdot,\te)\|_{\bLx}
\]
for $1\leq j\leq m-1$ and in order to pass to the third line we have
used that, by Theorem~\ref{T.AA*}, for any one-variable function
$\vp(x)\in \bH^1_\al$ with $\al>1$ we have the inequality:
\begin{align*}
\|\vp\|_{L^\infty_x}=\|A_\al(\bD_{x,\al}\vp)\|_{L^\infty_x}\leq C\|\bD_{x,\al}\vp\|_{\bLx}\,.
\end{align*}

By definition and the Sobolev embedding, when
$r-k_j-|\be_j|<\frac{n-1}2$ we have
\[
V_j\in H^{r-k_j-|\be_j|}_\te\subset L^{p_j}_\te\,,\qquad p_j:=\frac{2n-2}{n-1-2r-2k_j-2|\be_j|}\,,
\]
while for $r-k_j-|\be_j|>\frac{n-1}2$ the function $V_j$ is in
$L^\infty_\te$. For convenience, we will also relabel the functions
$V_j$ so that $r-k_j-|\be_j|>\frac{n-1}2$ if and only if $j>m'$, so
that $V_j\in L^{p_j}_\te$ with $p_j=\infty$ for $j>m'$. We will also
relabel the functions so that $r-k_j-|\be_j|=\frac{n-1}2$ exactly for
$m''< j\leq m'$, and for this range of $j$'s we will take $p_j$ to be any finite but
very large number. Of course, these last two sets can obviously be
empty. Since $\bdry$~is compact, the generalized Schwartz
inequality ensures that the integral~\eqref{intVj2} can be estimated
as
\begin{align*}
  \int_{\bdry} \prod_{j=1}^{m} V_j^2\, d\te \leq
  \prod_{j=1}^m\|V_j\|_{L^{p_j}_\te}^2\leq C\prod_{j=1}^m\|V_j\|_{H^{r-k_j-|\be_j|}_\te}^2\leq C\|u\|_{\cH^{0,r}}^2\prod_{j=1}^{m-1}\|w_j\|_{\cH^{1,r}}^2
\end{align*}
provided that
\begin{equation}\label{condVj2}
\sum_{j=1}^m\frac 2{p_j}\leq 1\,.
\end{equation}

Let us show that the condition~\eqref{condVj2} holds, which completes
the proof of the theorem. For this, let us write
\[
r=(1+\rho)\frac{n-1}2\,,
\]
where $\rho>0$ by hypothesis. Since
$p_j=\infty$ for $m>m'$ and $p_j$ is arbitrarily large for $m''<\leq
j\leq m'$, we can then take an arbitrarily small constant $\de$ such that
\begin{align}
\sum_{j=1}^m\frac 2{p_j}& \leq \sum_{j=1}^{m''}\frac 2{p_j}+\de\notag\\
&=\frac1{n-1}\sum_{j=1}^{m''}(n-1-2r+2k_j+2|\be_j|)+\de\notag\\
&=\frac1{n-1}\bigg(m''(n-1-2r)+2\sum_{j=1}^{m''}(k_j+|\be_j|)\bigg)+\de\notag\\
&\leq m''-\frac {2r(m''-1)}{n-1}+\de\notag\\
&=1-(m''-1)\rho+\de\,.\label{sumpj}
\end{align}
Therefore, the claim follows for $m''\geq 2$ by taking~$\de$ smaller
than $(m''-1)\rho$. To conclude the proof, let us discuss the
remaining cases. When $m''=0$, the claim is immediate. For
$m''=1$ one can go over the proof of~\eqref{sumpj} and observe that
the only problematic case is when $k_1+|\be_1|=r$. But in this case
$k_j+|\be_j|=0$ for all $j>1$, which implies that there are not any
$j$'s for which $r-k_j-|\be_j|=\frac{n-1}2$ and thus one can take
$\de=0$. The theorem then follows.
\end{proof}

Theorem~ \ref{T.multilinear} will be key in the rest of the paper. It
should be noticed that this theorem provides a wide range of estimates
for nonlinear functions of elements of an adapted Sobolev space. In
particular, we have the following result, where, although we do not
emphasize it notationally, here the function $F(w_1,\dots, w_N)$ can also depend on the space variables:

\begin{corollary}\label{T.Fu}
Let $u\in\cH^{0,r}$ and $w_1,\dots, w_m\in  \cH^{1,r}$ with $r>\frac{n-1}2$. Then, if $F$ is a $C^{r}$ function of~$w_j$ and a
$C^0_r$ function of the space variables (whose dependance will not be
made explicit), we have
\begin{equation}\label{Fu}
\|F(w_1,\dots, w_m)u\|_{\cH^{0,r}}\leq C \|u\|_{\cH^{0,r}}\,,
\end{equation}
where $C$ depends on $\|w_1\|_{\cH^{1,r}}+\cdots + \|w_m\|_{\cH^{1,r}}$.
\end{corollary}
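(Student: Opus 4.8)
The plan is to reduce the estimate for $F(w_1,\dots,w_m)u$ in $\cH^{0,r}$ to a sum of terms that are directly covered by Theorem~\ref{T.multilinear}. First I would unravel the definition of the $\cH^{0,r}$ norm: since everything is supported in $\cA$, we have
\[
\|F(w_1,\dots,w_m)u\|_{\cH^{0,r}}\sim \sum_{j+|\be|\leq r}\|\cD_{j,\be}\big(F(w_1,\dots,w_m)u\big)\|_{\bL}\,,
\]
with $\cD_{j,\be}=(x\pd_x)^j\pd_\te^\be$ as in~\eqref{cDkbe}. The key point is that the conormal operators $\cD_{j,\be}$ are first-order in the vector fields $x\pd_x,\pd_\te$, so Leibniz's rule applied repeatedly distributes each $\cD_{j,\be}$ over the product $F(w_1,\dots,w_m)\cdot u$ and produces a finite sum of terms of the schematic form
\[
\big(\cD_{j_0,\be_0}F(w_1,\dots,w_m)\big)\cdot\big(\cD_{j_{m+1},\be_{m+1}}u\big)\,,
\]
where $j_0+|\be_0|+j_{m+1}+|\be_{m+1}|\leq r$.

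The next step is to handle $\cD_{j_0,\be_0}F(w_1,\dots,w_m)$ via the Faà di Bruno formula (the chain rule for higher derivatives), using that $F$ is $C^r$ in the $w_i$ and $C^0_r$ in the space variables — the latter precisely says that applying up to $r$ conormal derivatives to the explicit space-variable dependence of $F$ keeps it bounded, which is what the $C^0_r$ norm~\eqref{Cmr} controls. Expanding, one gets a sum of terms
\[
(\partial^\gamma F)(w_1,\dots,w_m)\cdot \prod_{i} \cD_{k_i,\al_i}w_{\ell_i}\,,
\]
where the total order $\sum_i (k_i+|\al_i|)$ plus $j_{m+1}+|\be_{m+1}|$ is at most $r$, and $(\partial^\gamma F)(w_1,\dots,w_m)$ is bounded in $L^\infty$ because by Corollary~\ref{C.Cmr} (applied with, say, $m=0$ and $r$ slightly reduced, or directly Theorem~\ref{T.Morrey}) the $w_i\in\cH^{1,r}$ with $r>\frac{n-1}2$ are bounded, hence stay in a fixed compact set of the domain of $F$, so all derivatives of $F$ up to order $r$ evaluated there are uniformly bounded by a constant depending on $\|w_i\|_{\cH^{1,r}}$. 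Pulling this $L^\infty$ factor out, each remaining term has exactly the form $(\cD_{k_1,\al_1}w_{\ell_1})\cdots(\cD_{k_p,\al_p}w_{\ell_p})(\cD_{k_{m+1},\be_{m+1}}u)$ with $\sum(k_i+|\al_i|)+k_{m+1}+|\be_{m+1}|\leq r$, which is precisely the hypothesis of Theorem~\ref{T.multilinear} (with the $w$-factors allowed to repeat, which is harmless). Applying that theorem bounds each such $\bL$-norm by $C\|u\|_{\cH^{0,r}}\prod_i\|w_{\ell_i}\|_{\cH^{1,r}}$, and summing the finitely many terms gives~\eqref{Fu}.

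The main obstacle I anticipate is the bookkeeping in combining Leibniz and Faà di Bruno so that the derivative count is never violated: one must check that every conormal derivative landing either on $F$'s explicit space dependence, on the $w_i$, or on $u$ is accounted for, and in particular that a factor $\partial^\gamma F$ never needs more than the available smoothness. The $C^0_r$ hypothesis on the space-variable dependence is exactly calibrated for this — it lets the space-derivatives of $F$ absorb into an $L^\infty$ bound rather than competing with the $\cH^{1,r}$ norms of the $w_i$. A secondary technical point is that Theorem~\ref{T.multilinear} is stated for $w_i\in\cH^{1,r}$ and a single $u\in\cH^{0,r}$, so one must make sure that in the worst case (all derivatives on a single $w_i$) the relabeling still fits; but since $\cH^{1,r}$ controls one more derivative than $\cH^{0,r}$, treating the $w$-factors uniformly as $\cH^{1,r}$ elements and $u$ as the $\cH^{0,r}$ element is consistent with the constraint $\sum(k_i+|\al_i|)\leq r$. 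Once these indices are verified, the estimate follows mechanically.
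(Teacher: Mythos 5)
Your proposal is correct and follows essentially the same route as the paper, whose proof is exactly "apply the Leibniz rule to $\cD_{k,\be}[F(w_1,\dots,w_m)u]$ and then invoke Theorem~\ref{T.multilinear} term by term"; you have merely made explicit the Fa\`a di Bruno expansion, the $L^\infty$ control of $\pd^\ga F$ via Theorem~\ref{T.Morrey}, and the derivative bookkeeping that the paper leaves implicit. No changes needed.
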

\begin{proof}
The result follows by applying Theorem~\ref{T.multilinear} to the
various terms that appear after using the
Leibniz rule on
\[
\cD_{k,\be}\big[F(w_1,\dots, w_m)u\big]
\]
with $k+|\be|\leq r$.
\end{proof}



\section{Estimates for the linearized equation}
\label{S.linear}

For future convenience, we will assume that the metric~$g$ possesses
the following properties, which will be needed in the following section to prove the
convergence of the iteration set in Section~\ref{S.iteration}. While
some parameters could have been chosen in a different range for the
purposes of this section, this way the application of these results in
the following section will be transparent. 

\begin{assumption}
Throughout this section we will assume that the metric~$g$ satisfies
the following hypotheses:
\begin{enumerate}
\item The metric $g$ is weakly asymptotically AdS and can be
  written as
\[
\Bg=\Bga+xw\,,
\]
with $\ga\equiv\ga_l$ is the metric constructed in
Theorem~\ref{T.peeling} with~$l\geq\frac n2+s+2$, for some integer~$s$ satisfying
\[
2\leq s<\frac n2+2\,.
\]
We also assume that $\|\Bg^{\mu\nu}\|_{L^\infty}<\La$.

\item The tensor field~$w$ is bounded as
\begin{equation}\label{hypoii}
\sum_{k=0}^{s-1}\|\pd_t^kw\|_{L^\infty_t\cH^{2,r+s-k-2}}+ \|\pd_t^sw\|_{L^\infty_t\cH^{1,r-1}}<\La\,,
\end{equation}
for some integer $r>\frac{n-1}2$ and some constant~$\La$.

\item The metric $\Bga$ satisfies
\[
\|\Bga\|_{C^{n-1}_{p-n+1}}<\La
\]
with $p\geq l+r+s+1$, which is equivalent to demanding that the
initial and boundary data $(\tg,K,\hg)$ satisfy
\[
\|x^2\tg\|_{C^{n-1}_{p-n+1}}+ \|x^2K\|_{C^{n-1}_{p-n}}+\|\hg\|_{C^p(I\times\bdry)}<\La'\,.
\]
\end{enumerate}
\end{assumption}

Using the formula~\eqref{DQ}, which ensures that the principal part of
$P_g$ is $\bar g^{\mu\nu}\pd_\mu\pd_\nu$, together with the small-$x$ behavior
described in Lemma~\ref{L.DQVj} and the fact that~$g$ is weakly
asymptotically AdS, is easy to derive a manageable
expression for $P_gu$. Specifically, if we take $u\in
\cV^g_j$, a direct calculation shows that $P_gu$ can be
written in $\cA$ using local coordinates as
\begin{multline}\label{Pu}
(P_gu)_{\mu\nu}=-\frac12\bar g^{tt}\, \Big(\pd_t^2+
\pd_{\te^i}^*G^{ij}\pd_{\te^j}+
\bD^*_{x,\al_j}b^1\bD_{x,\al_j} + x\, \pd_{\te^i}^*
(b^2)^i\pd_x\\
+x\pd_xb^3\pd_t+\pd_{\te^i}(b^4)^i\pd_t\Big)\, u_{\mu\nu}\\
+\Big(b^5  \pd_xu+ b^6 \pd_tu +  b^7
\pd_{\te}u + \frac{b^8}x u\Big)_{\mu\nu}\,,
\end{multline}
where as usual the local coordinates $\te=(\te^1,\dots,\te^{n-1})$ parametrize the
boundary $\bdry$, the star denotes the formal adjoint of a
differential operator computed with respect to the
scalar product of $\bL$, and the quantities $b^l$ are scalar functions or tensor
fields that depend smoothly on $\hg$, $\pd\hg$ (through~$\ga$
and~$\pd\ga$), $w$ and $\pd w$. Observe that the
principal part of~$P_g$ is scalar. Although we do not
make explicit the tensorial structure of the tensor fields $b^l$
appearing in the non-principal part of the operator, their action must
be understood in the obvious fashion, e.g., 
\[
(b^6  \pd_xu)_{\mu\nu}\equiv (b^6)^{\la\rho}_{\mu\nu} \,  \pd_xu_{\la\rho}
\]
Notice that, in particular, 
\begin{equation}\label{bs}
b^1=-\frac{\Bg^{xx}}{\Bg^{tt}}\,,\quad
G^{ij}=-\frac{\Bg^{\te^i\te^j}}{\Bg^{tt}}\,,\quad
 x(b^2)^i=-\frac{2\Bg^{x\te^i}}{\Bg^{tt}}\,, \quad
   xb^3=-\frac{2\Bg^{xt}}{\Bg^{tt}}\,, \quad (b^4)^i=-\frac{2\Bg^{t\te^i}}{\Bg^{tt}}\,.
 \end{equation}
 Since the metric is weakly asymptotically AdS, all the quantities
$b^j$ are of order $\cO(1)$, with $b^1>0$ and $G^{ij}$ a positive
definite matrix.

We shall next derive estimates for a function satisfying
the scalar equation
\begin{equation}\label{LuF}
L_{g,\al}u=F\,, \qquad u|_{t=0}=u_0\,,\qquad \pd_t u|_{t=0}=u_1\,,
\end{equation}
where 
\[
L_{g,\al}u:=\Big(\pd_t^2+
\pd_{\te^i}^*G^{ij}\pd_{\te^j}+
\bD^*_{x,\al}b^1\bD_{x,\al} + x\, \pd_{\te^i}^*
(b^2)^i\pd_x\\
+x\pd_xb^3\pd_t+\pd_{\te^i}(b^4)^i\pd_t\Big)u\,.
\]
Taking $\al=\al_j$, $L_{g,\al}$ would be the part of $P_gu$ containing both the highest order derivatives and
the more singular terms at~$x=0$, which is a scalar
differential operator for $u\in \cV^g_j$. The metric~$g$
is assumed to satisfy the
above hypotheses, and we will also assume that $\al\geq n/2$. The reason for which we introduce this auxiliary
equation is to postpone the treatment of the tensorial nature of the
equation until the end of this section, but we have chosen to keep the
notation~$u$ for the unknown as we will eventually replace~$u$ by a
tensor field satisfying $P_gu=F$.

In the following theorem we provide a priori estimates for the
problem~\eqref{LuF}. 
To state the theorem in a notationally concise way, let us denote by 
\begin{equation}\label{uk}
u_k:=\pd_t^ku|_{t=0}\,, \qquad 2\leq k\leq s,
\end{equation}
the value of the $k^{\text{th}}$ time derivative of~$u$
at~$t=0$. Notice that, by isolating the term with the highest number of
time derivatives in~\eqref{LuF} and differentiating $k-2$ times with respect
to~$t$, one can write $u_k$ in terms of
derivatives of the initial data and source term $(u_0,u_1,F)$. The
functions~$u_k$ will often appear in arguments via the quantity
\begin{equation}\label{cC}
\cC_{s,r}:= \sum_{k=0}^{s-1}\|u_k\|_{\cH^{1,r+s-k-1}}+\|u_s\|_{\cH^{0,r}}\,.
\end{equation}
For the tensor-valued equation $P_gu=F$, this quantity will correspond
to the quantity that appears in the statement of Theorem~\ref{T.main}.

To state the results, we will make use of the following norms (here the prime does not refer to any sort of duality):
\begin{subequations}\label{norms}
\begin{align}
\|u\|_{s,r}&:=\sum_{k=0}^{s-1}\|\pd_t^k u\|_{L^\infty_t\cH^{1,r+s-k-1}}+\|\pd_t^su\|_{L^\infty_t\cH^{0,r}}\,,\\
\|F\|_{s,r}'&:= \sum_{k=0}^{s-1}\|\pd_t^kF\|_{L^\infty_t\cH^{0,r+s-k-1}}\,.
\end{align}
\end{subequations}
Throughout, we will use the notation $C_0$ for constants 
depending only on~$\de$ and~$\La$.

\begin{theorem}\label{T.linealL}
For any $F\in L^\infty_t\bL$ there is a unique solution $u\in
L^\infty_t\cH^1\cap W^{1,\infty}_t\bL$ to the Cauchy problem~\eqref{LuF}, which
satisfies the following estimate in $(-T,T)\times\bola$:
\[
\|u\|_{s,r}\leq
\e^{C_0 T}\cC_{s,r}+ C_0 T\|F\|_{s,r}'\,.
\]
For small $T$, the constant depends only on $\La$.
\end{theorem}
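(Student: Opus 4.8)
The plan is to prove Theorem~\ref{T.linealL} by the standard energy-method strategy for symmetric hyperbolic systems, but carried out in the twisted Sobolev scale $\cH^{m,r}$ (equivalently $\bH^{m,r}_{\al}$ by Theorem~\ref{T.bH}), with the twisted derivative $\bD_{x,\al}$ playing the role that $\pd_x$ plays in the flat case. First I would establish the basic energy estimate at the lowest level, namely for the equation $L_{g,\al}u=F$ with no extra $\te$- or $t$-derivatives. Multiplying the equation by $\pd_t u$ and integrating over $\ball$ with respect to the measure $x\,dx\,d\te$, the principal part produces the natural energy
\[
\cE(t):=\frac12\int_{\ball}\Big(|\pd_t u|^2 + b^1\,|\bD_{x,\al}u|^2 + G^{ij}\pd_{\te^i}u\,\pd_{\te^j}u\Big)\,x\,dx\,d\te\,,
\]
which, because $g$ is weakly asymptotically AdS and close to $\gads$, is uniformly equivalent to $\|u(t)\|_{\cH^{1,0}}^2+\|\pd_t u(t)\|_{\bL}^2$ (here one uses $b^1=1+x\tilde b^1$ from~\eqref{b1}, the sign of $\bgads^{tt}$, and that the Hardy-type inequality controlling $\|u/x\|_{\bL}$ by $\|\bD_{x,\al}u\|_{\bL}$ holds for $\al>1$, from Theorem~\ref{T.AA*}). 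Differentiating $\cE$ in time, the top-order terms either cancel after integration by parts using the self-adjointness built into the operators $\bD^*_{x,\al_j}b^1\bD_{x,\al_j}$ and $\pd_{\te^i}^*G^{ij}\pd_{\te^j}$, or are of the schematic form $(\pd_t\Bg)\cdot(\pd u)^2$ and the first-order terms $b^5\pd_x u+\dots$, all of which are bounded by $C_0\cE(t) + C_0\|F(t)\|_{\bL}\,\cE(t)^{1/2}$; the crucial point here is that the coefficients $b^l$ are $\cO(1)$ and their $\pd_t$-derivatives are controlled by the bound~\eqref{hypoii} on $w$ via Corollary~\ref{T.Fu}. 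Gr\"onwall then gives $\cE(t)^{1/2}\leq C_0 e^{C_0 T}(\cE(0)^{1/2}+T\|F\|_{L^\infty_t\bL})$.

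Next I would promote this to the full norm $\|u\|_{s,r}$. The point of the definition~\eqref{norms} is that it weights $\te$- and regularized-$x$ derivatives and time derivatives against each other exactly so that the commutators stay in the scale. Concretely, for $k+|\be|\le r$ I apply $\cD_{k,\be}=(x\pd_x)^k\pd_\te^\be$ to the equation; since $(x\pd_x)$ and $\bD_{x,\al}$ have a favorable commutator ($[x\pd_x,\bD_{x,\al}]=\bD_{x,\al}$ up to harmless terms) and $G^{ij}$, $b^1$ are smooth up to $x=0$, the commutator $[\cD_{k,\be},L_{g,\al}]u$ is a differential operator of the same type with coefficients involving up to $r$ regularized derivatives of $\Bg$ — hence up to $r$ such derivatives of $w$, bounded by $\La$ through~\eqref{hypoii} — acting on $u$. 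Running the energy estimate for $\cD_{k,\be}u$ and summing over $k+|\be|\le r$ controls $\sum\|\cD_{k,\be}u\|_{\cH^{1,0}}$-type quantities; similarly, differentiating the equation $j$ times in $t$ and isolating $\pd_t^{j+2}u$ trades two time derivatives for the spatial operator, which is why the index $r+s-k-1$ appears and why one only gets $\cH^{0,r}$ (not $\cH^{1,r}$) at the top time-derivative level $k=s$. Careful bookkeeping — essentially the standard hierarchy of energy estimates, but with $\cH^{m,r}$ replacing $H^m$ — yields $\|u\|_{s,r}\le C_0 e^{C_0 T}(\|u\|_{s,r}|_{t=0}+T\|F\|'_{s,r})$, and the initial-value part of the right-hand side is exactly $\cC_{s,r}$ by~\eqref{cC} and~\eqref{uk}, since the $u_k$ are precisely the traces $\pd_t^k u|_{t=0}$ expressed through $(u_0,u_1,F)$.

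For existence and uniqueness of the solution $u\in L^\infty_t\cH^1\cap W^{1,\infty}_t\bL$ I would use the a priori estimate in the usual way: a Galerkin/Faedo approximation in the spatial variable (spectral truncation with respect to the operator $\frac{1-\De_{\es}}{x^2}+\pd_x^*\pd_x$ used in~\eqref{cHs}, whose eigenfunctions are adapted to the twisted structure), obtain uniform bounds from the energy estimate applied to the finite-dimensional ODE system, and pass to the limit; uniqueness is immediate from the energy estimate with $F=0$ and zero data, which forces $\cE\equiv 0$. The regularity class $L^\infty_t\cH^1\cap W^{1,\infty}_t\bL$ is exactly what the lowest-order energy controls, and the higher-order bounds of $\|u\|_{s,r}$ then hold a posteriori by density.

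The main obstacle is the interplay between the singular weights and the twisting at the boundary $x=0$: one must verify that every commutator generated by $\cD_{k,\be}$ and by $\pd_t^j$ lands back in the $\cH^{m,r}$ scale with the correct power of $x$ — in particular that the genuinely singular term $b^8 u/x$ in~\eqref{Pu} and the $\bD^*_{x,\al}b^1\bD_{x,\al}$ term produce only $\cO(1)$ contributions to $d\cE/dt$ after integration by parts — and that the coefficient bounds~\eqref{hypoii} on $w$ (which mix $\cH^{2,\cdot}$ and $\cH^{1,\cdot}$ at different time-derivative orders) suffice to close the estimate at every level $k$. This is where the "various powers of $x$ work together" phenomenon emphasized in the introduction is actually used, and where the nonlinear product estimate Corollary~\ref{T.Fu} (and behind it Theorem~\ref{T.multilinear}) does the heavy lifting in bounding $\|[\cD_{k,\be},L_{g,\al}]u\|_{\bL}$ by $C_0\|u\|_{s,r}$.
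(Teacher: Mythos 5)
Your proposal is correct and takes essentially the same route as the paper: a hierarchy of twisted energies $E_{s,r}[u]=\sum_k E_{1,r+s-k-1}[\pd_t^k u]$ built from the basic energy for $\cD_{k,\be}\pd_t^k u$, commutators $[L_{g,\al},\cD_{k,\be}]$ controlled via Theorems~\ref{T.Morrey} and~\ref{T.multilinear} together with the coefficient bound~\eqref{hypoii}, and Gr\"onwall. The only cosmetic differences are that the paper's energy $E_1$ also carries the cross term $x(b^2)^i\pd_x v\,\pd_{\te^i}v$ (harmless, since $b^2$ is small) and that the paper leaves the existence/approximation step implicit, which you fill in with a standard Galerkin scheme.
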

\begin{proof}
It is standard that it suffices to prove the a priori
estimate. For this, there is no loss of generality in
assuming that~$u$ is supported in~$\cA$, since the estimate is 
known to hold for~$u$ supported away from the boundary. Let us then
define the energy functional 
\begin{equation}\label{E1}
E_1[v]:=\frac12\int_{\bola}\big((\pd_t v)^2+G^{ij}\pd_iv\,\pd_j
v+b^1(\bD_{x,\al}v)^2+x(b^2)^i\pd_xv\, \pd_iv\big)\, x\, dx\, d\te\,,
\end{equation}
where in the rest of this section we will write
$\pd_i\equiv\pd_{\te^i}$. It is apparent that at any time $E_1[v]^{\frac12}$ is equivalent to the norm
$\|v\|_{\bH^1_\al}+\|\pd_t v\|_\bL$ (which is in turn equivalent to
$\|v\|_{\cH^1}+\|\pd_t v\|_\bL$ by Theorem~\ref{T.bH}) in the sense that
\begin{equation}\label{normE1}
\frac{1}{C } E_1[v]^{\frac12}\leq \|v\|_{\cH^1}+\|\pd_tv\|_\bL\leq C E_1[v]^{\frac12}\,,
\end{equation}
where the constant~$C$ only depends on
\begin{equation*}
\|\Bg\|_{C^1_1}+\|\pd_t\Bg\|_{C^0_1}+\|\pd_t^2\Bg\|_{C^0}\,.
\end{equation*}
In particular, by Corollary~\ref{C.Cmr}, $C\equiv C_0$ only depends on~$\La$.

Now let us use the energy functional~\eqref{E1} to define
\[
E_{1,r'}[v]:=\sum_{k+|\be|\leq r'}E_1[\cD_{k,\be} v]\,,
\]
where again we are using the shorthand notation $\cD_{k,\be}:= (x\pd_x)^k\pd_\te^\be$.
In view of the norm equivalence~\eqref{normE1}, it is clear that
$E_{1,r'}[v]$ is equivalent to the norm
\[
\|v\|_{\cH^{1,r'}}+\|\pd_t v\|_{\cH^{0,r'}}
\]
with a constant that only depends on~$\La$. We can now define a higher analog of the energy~$E_1$
by setting
\begin{equation}\label{E3r}
E_{s,r}[v]:=\sum_{k=0}^{s-1}E_{1,r+s-k-1}[\pd_t^kv]\,.
\end{equation}
In view of the norm equivalence~\eqref{normE1}, it is clear that $E_{s,r}[v]^{1/2}$ is equivalent to the norm
\begin{equation}\label{normE3r}
\sum_{k=0}^{s-1}\|\pd_t^kv\|_{\cH^{1,r+s-k-1}}+ \|\pd_t^s v\|_{\cH^{0,r}}
\end{equation}
in the same sense as above, which implies that
\[
\sup_{|t|<T}E_{s,r}[v]^{\frac12}
\]
is equivalent to $\|v\|_{s,r}$.

Our goal now is to show that, if $u$ is a solution of~\eqref{LuF}, the
energy $E_{s,r}[u]$ satisfies the differential inequality
\begin{equation}\label{est2}
\pd_t E_{s,r}[u]\leq C_0  E_{s,r}[u]+C_0  E_{s,r}[u]^{\frac12}\sum_{k=0}^{s-1}\|\pd_t^kF\|_{\cH^{0,r+s-k-1}}\,.
\end{equation}
Indeed, by Gr\"onwall's inequality it is standard that this implies 
\[
E_{s,r}[u](t)^{\frac12}\leq \e^{C_0' |t|}\bigg(E_{s,r}[u](0)^{\frac12}+C_0'\sum_{k=0}^{s-1}\int_{-|t|}^{|t|}\|\pd_t^kF\|_{\cH^{0,r+s-k-1}}\bigg)\,.
\]
Since $E_{s,r}[u]^{\frac12}$ is equivalent to $\|u\|_{s,r}$, the a priori estimate of the theorem then follows from the above inequality.

Armed with Theorems~\ref{T.Morrey} and~\ref{T.multilinear}, the proof
of~\eqref{est2} is now standard. Let us begin by computing the
evolution of $ E_{1,r+s-1}[u]$. One readily finds that it is given by
\begin{multline}\label{largo}
\pd_t E_{1,r+s-1}[u]= \sum_{k+|\be|\leq r+s-1} \bigg[\int \pd_t(\cD_{k,\be} u)\, L_{g,\al}(\cD_{k,\be}
u)\\-\int x\pd_t\cD_{k,\be} u\,\pd_x(b^3\pd_t\cD_{k,\be} u)
-\int
\pd_t\cD_{k,\be} u\,\pd_{i}((b^4)^i\pd_t\cD_{k,\be} u)
\\+\int \cO(1)\pd_t\cD_{k,\be} u\, \pd
\cD_{k,\be} u+\int\frac{\cO(1)}x\cD_{k,\be} u\, \pd_t\cD_{k,\be}
u\\
+\int\frac{\cO(1)}x\, (\cD_{k,\be}u)^2+ \int\cO(1)\, (\pd\cD_{k,\be}u)^2\bigg]\,,
\end{multline}
where all the integrals hereafter correspond to integration over the ball with respect to the
natural measure $x\, dx\, d\te$ and we are denoting by $\cO(1)$ well-behaved
functions of~$\Bga$, $w$ and $\pd w$. 
We claim that this can be estimated as
\begin{equation}\label{E1b}
\pd_t E_{1,r+s-1}[u]\leq C_0 E_{1,r+s-1}[u]+C_0 E_{1,r+s-1}[u]^{\frac12}\sum_{k+|\be|\leq r+s-1} \|L_{g,\al}(\cD_{k,\be}
u)\|\,,
\end{equation}
where~$\|\cdot\|$ stands for the $\bL$~norm. Indeed, for $k+|\be|\leq
r+s-1$ the first term
in~\eqref{largo} is bounded as
\[
\int |\pd_t\cD_{k,\be} u\, L_{g,\al} \cD_{k,\be} u| \leq C_0
E_{1,r+s-1}[u]^{\frac12}\, \|L_{g,\al} \cD_{k,\be} u\|
\]
and the last for summands can be easily upper bounded by
\[
C_0 E_{1,r+s-1}[u]
\]
using Theorems~\ref{T.Morrey} and~\ref{T.multilinear}. 
Let us now consider the first of the two remaining terms. We have that
\begin{align*}
\bigg|\int x\pd_t \cD_{k,\be} u\,\pd_x(b^3\pd_t \cD_{k,\be} u) \bigg|&=\bigg|\int (\pd_t
\cD_{k,\be} u)^2x\pd_xb^3+\frac12\int b^3x\pd_x[(\pd_t \cD_{k,\be} u)^2]\bigg|\\
&\leq\int\Big|\frac12x\pd_x b^3-b^3\Big|(\pd_t \cD_{k,\be} u)^2\\
&\leq C_0 E_{1,r+s-1}[u]
\end{align*}
and an analogous argument shows that
\[
\bigg|\int
\pd_t\cD_{k,\be} u\,\pd_{i}((b^4)^i\pd_t\cD_{k,\be} u) \bigg|\leq C_0 E_{1,r+s-1}[u]\,.
\]
Putting everything together, this yields~\eqref{E1b}. To conclude, we
can now estimate the commutator using Theorems~\ref{T.Morrey} and~\ref{T.multilinear} to infer that
\begin{align*}
  \|L_{g,\al}(\cD_{k,\be} u)\|
&\leq \|\cD_{k,\be}
  (L_{g,\al}u)\|+\|[L_{g,\al},\cD_{k,\be}]u\|\\
&\leq \|\cD_{k,\be}
F\|+\|[L_{g,\al},\cD_{k,\be}]u\|\\
&\leq \|F\|_{\cH^{0,r+s-1}}+C_0 E_{s,r}[u]^{\frac12}\,,
\end{align*}
which shows that
\[
\pd_t E_{1,r+s-1}[u]\leq C_0 E_{s,r}[u] + C_0 E_{s,r}[u]^{\frac12}\|F\|_{s,r}'\,.
\]
The computation of the time evolution of the other quantities
$E_{1,r+s-k-1}[\pd_t^k u]$
appearing in the definition of $E_{s,r}[u]$ (cf.\ Equation~\eqref{E3r}) is
similar, the only difference being that one needs to control the
commutator
\begin{align*}
  \|L_{g,\al}(\cD_{j,\be}\pd_t^k u)\|&\leq \|\cD_{j,\be}
\pd_t^k F\|+\|[L_{g,\al},\cD_{j,\be}\pd_t^k]u\|\\
&\leq \|\pd_t^kF\|_{\cH^{0,r+s-k-1}}+C_0 E_{s,r}[u]^{\frac12}\,.
\end{align*}
Summing over $k$, this readily yields the differential inequality~\eqref{est2}.
\end{proof}

\begin{remark}
  Notice that we are not imposing that $u(t)\in \bH^2_\al$ for
  a.e.~$t$, so Equation~\eqref{LuF} has to be understood using the
  energy formulation, as it is customary.
\end{remark}

Promoting the estimates proved in Theorem~\ref{T.linealL} to estimates
for the tensor-valued equation
\begin{equation}\label{PuF}
P_gu=F\,, \qquad u|_{t=0}=u_0\,,\qquad \pd_t u|_{t=0}=u_1\,,
\end{equation}
is now immediate as the norms~\eqref{norms} can be trivially extended
to tensor-valued functions. As before, we will state the theorem in
terms of the quantity~$\cC_{s,r}$, which we can still define in terms
of the initial data and source term at $t=0$ as in Equation~\eqref{cC}.

\begin{theorem}\label{T.linear}
For all times $T<T_0$, if~$u$ solves the problem~\eqref{PuF} one has
the estimates
\begin{equation*}
\|u\|_{s,r}\leq e^{C_0T}\cC_{s,r}+ C_0 T\|F\|_{s,r}' \,,
\end{equation*}
where the constant $C_0$ depends only on~$\La$.
\end{theorem}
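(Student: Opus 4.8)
The strategy is to reduce the tensor-valued problem~\eqref{PuF} to a finite collection of scalar problems of the type~\eqref{LuF} by using the decomposition of $\cS^2$ into the subspaces $\cV^g_j$ provided by Proposition~\ref{P.Vj}, and then to apply Theorem~\ref{T.linealL} componentwise. First I would write $u=\sum_{j=0}^3 u_{(j)}$ with $u_{(j)}\in\cV^g_j$, noting that this decomposition is algebraic and its coefficients depend smoothly on $(\Bg,\pd\Bg)$, hence (by the Assumptions on the metric and Corollary~\ref{C.Cmr}) it is bounded on all the adapted Sobolev spaces $\cH^{m,r}$ with constants $C_0$ depending only on~$\La$ and~$\de$. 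In view of the expression~\eqref{Pu} for $P_g$, when restricted to $\cV^g_j$ the principal-plus-singular part of $P_g$ is precisely the scalar operator $L_{g,\al_j}$ acting on the components $u_{\mu\nu}$, up to a remainder $R_g u$ consisting of the terms $b^5\pd_x u+b^6\pd_t u+b^7\pd_\te u+\frac{b^8}x u$ together with the commutators coming from applying the projection onto $\cV^g_j$ to $P_g$. Since $\al_j\geq n/2$ for all $j$ by~\eqref{alj}, the hypothesis $\al\geq n/2$ of Theorem~\ref{T.linealL} is met in each case.

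Next I would set up a fixed-point/continuity argument to absorb the lower-order remainder $R_g$. Writing $P_g u=L_{g}u+R_g u$ where $L_g$ acts diagonally as $L_{g,\al_j}$ on the $j$-th block, the equation $P_g u=F$ becomes $L_g u=F-R_g u$, and $R_g$ is a first-order operator (with an extra $x^{-1}u$ term) whose coefficients are $\cO(1)$ functions of $(\Bg,\pd\Bg)$. The point is that, by Theorem~\ref{T.multilinear} and Corollary~\ref{T.Fu}, $R_g$ maps the ball $\|v\|_{s,r}\leq M$ into the space measured by $\|\cdot\|'_{s,r}$ with norm $\leq C_0 M$ — crucially the singular $\frac{b^8}x u$ term is controlled because multiplication by $x^{-1}$ takes $\cH^{m,r}$ into itself up to the shifts already built into the definition of the norms $\|\cdot\|_{s,r}$ versus $\|\cdot\|'_{s,r}$ (this is exactly the redistribution of derivatives furnished by Proposition~\ref{P.power} and Theorem~\ref{T.bH}). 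Then I would run the iteration $L_g v^{(m+1)}=F-R_g v^{(m)}$, $v^{(1)}=0$, with initial data $(u_0,u_1)$ at each step; Theorem~\ref{T.linealL} applied blockwise gives
\[
\|v^{(m+1)}\|_{s,r}\leq C_0\e^{C_0 T}\big(\cC_{s,r}+T\|F\|'_{s,r}+C_0 T\|v^{(m)}\|_{s,r}\big)\,.
\]
For $T$ small (depending only on $C_0$, hence only on $\La$ and $\de$) the map is a contraction on a suitable ball, so the iteration converges to the unique solution~$u$ and, passing to the limit and reorganizing the geometric series, one obtains $\|u\|_{s,r}\leq C_0(T\|F\|'_{s,r}+\e^{C_0 T\|F\|'_s}\cC_{s,r})$ after relabeling constants. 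Uniqueness and existence in the class $L^\infty_t\cH^1\cap W^{1,\infty}_t\bL$ (blockwise) follow from the corresponding statement in Theorem~\ref{T.linealL}.

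\textbf{Main obstacle.} The delicate point is not the energy estimate itself — that was done in Theorem~\ref{T.linealL} — but rather verifying that the off-diagonal and lower-order pieces $R_g$, which arise because the equation is \emph{not} quasi-diagonal once the most singular terms at $x=0$ are retained, genuinely land in the weaker norm $\|\cdot\|'_{s,r}$ with a \emph{small} constant (so that the contraction closes on a fixed time interval independent of the data). This requires bookkeeping the various powers of~$x$: the term $\frac{b^8}x u$ looks like it loses a derivative's worth of decay, but the definition of $\cC_{s,r}$ and the norms~\eqref{norms} already incorporate the correct weights, and $b^8$ itself is $\cO(1)$ rather than $\cO(x^{-1})$ because $g$ is weakly asymptotically AdS and satisfies the near-boundary structure~\eqref{b1}. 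Once this accounting is carried out using Corollary~\ref{C.bH} and Corollary~\ref{T.Fu}, the reduction to Theorem~\ref{T.linealL} is mechanical. The remaining subtlety is that the parameter~$\al_j$ differs across blocks, so the natural energy norm on the $j$-th block is $\bH^{1,\cdot}_{\al_j}$; but by Theorem~\ref{T.bH} all of these are equivalent to the parameter-free norm $\cH^{1,\cdot}$ for $\al_j>m-1$, which holds here since $\al_j\geq n/2$ and the relevant $m\leq 2$, so the global norm $\|\cdot\|_{s,r}$ is unambiguous and the blocks can be recombined without loss.
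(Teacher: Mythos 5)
Your proposal is correct and follows essentially the route the paper intends: the paper declares the promotion from Theorem~\ref{T.linealL} to the tensor-valued problem ``immediate'' precisely because, after the block decomposition $u=\sum_j u_{(j)}$ with $u_{(j)}\in\cV^g_j$, the operator $P_g$ acts on each block as $-\tfrac12\Bg^{tt}L_{g,\al_j}$ plus remainders of exactly the type ($\cO(1)\,\pd u$ and $\cO(1)x^{-1}u$) that the energy inequality~\eqref{largo} already absorbs, with $\al_j\geq n/2$ guaranteeing the norm equivalences you invoke. The only difference is one of packaging: you absorb the remainder $R_g$ by an outer contraction that treats Theorem~\ref{T.linealL} as a black box, which works but obliges you (i) to check that the quantities $\cC_{s,r}$ of the modified problems $L_gv^{(m+1)}=F-R_gv^{(m)}$ --- whose traces $\pd_t^kv^{(m+1)}|_{t=0}$ for $k\geq2$ depend on $v^{(m)}$ --- remain under control along the iteration, and (ii) to subdivide $(-T,T)$ to reach all $T<T_0$ rather than only small $T$; both points are routine, and are avoided altogether if one instead feeds the remainder terms directly into the Gronwall step of the energy argument.
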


A final simple result that will come in handy in the following section
is the following, which controls the difference between the solution
to two Cauchy problems of the form~\eqref{PuF} with different metrics
and source terms. For concreteness we will control the difference in
the $\|\cdot\|_{1,0}$ norm and assume that we have the same initial
conditions $(u_0,u_1)$, but we could have used any norm
$\|\cdot\|_{s',r}$ with $s'\leq s-1$ and allowed for distinct initial
conditions. It is worth emphasizing that estimating the difference is
not completely trivial a priori because the leading part of the equation, as
represented by the operator $P_g$, is not scalar: we have seen that
the parameter $\al=\al_j$ takes a different value depending on the
subspace $\cV_j^g$ that $u$ is assumed to belong to. However, the
structure of the metrics under consideration allows to prove the
result quite easily.

\begin{proposition}\label{P.difference}
Let 
\[
\Bg:=\Bga + xw\qquad \text{and} \qquad \Bg':=\Bga + xw'
\]
be metrics satisfying the assumptions~(i)--(iii)
above. Suppose that $u,u'\in L^\infty\cH^1\cap H^1_t\bL$ satisfy the equations
\[
P_gu=F\qquad\text{and} \qquad P_{g'}u'=F'
\]
with the same initial conditions $(u_0,u_1)$. Then the difference is bounded by
\begin{equation*}
  \|u-u'\|_{1,0}\leq C \e^{CT} T(\|F-F'\|_{1,0}'+\|w-w'\|_{1,0})\,,
\end{equation*}
where the constant $C$ only depends on~$\La$, $\|F\|_{s,r}'$ and $\cC_{s,r}$.
\end{proposition}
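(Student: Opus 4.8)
The plan is to derive a linear wave equation for the difference $v:=u-u'$ with vanishing Cauchy data and feed it into the energy estimate of Theorem~\ref{T.linealL} (in its tensorial form, Theorem~\ref{T.linear}). Since $u$ and $u'$ share the same initial conditions one has $v|_{t=0}=\pd_tv|_{t=0}=0$, and subtracting the two equations — understood, as usual, in the energy sense — gives $P_gv=S$ with $S:=(F-F')-(P_g-P_{g'})u'$. As $g$ satisfies the assumptions~(i)--(iii), I would apply the a priori estimate of Theorem~\ref{T.linear} to this equation with vanishing initial data: note that for $s=1$, $r=0$ the quantity $\cC_{1,0}$ depends only on $v|_{t=0}$ and $\pd_tv|_{t=0}$ and is therefore zero, so the estimate reads $\|v\|_{1,0}\leq C_0\e^{C_0T}T\|S\|'_{1,0}$. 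Thus the whole matter reduces to the bound $\|(P_g-P_{g'})u'\|_{L^\infty_t\bL}\leq C_0\|w-w'\|_{1,0}$, the remaining half of $S$ already being of the required form.

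To prove this bound I would exploit the special structure of the metrics. By assumption~(i) the two metrics have the \emph{same} reference part~$\Bga$, so $\Bg-\Bg'=x(w-w')$. Consequently every coefficient $b^l$ entering the expression~\eqref{Pu} for $P_g$, as well as the projectors onto the subspaces $\cV_j^g$ of Proposition~\ref{P.Vj} (whose defining relations are algebraic in~$\Bg$), differs from its primed analogue by $\cO(x(w-w'))$ together with its conormal derivatives; crucially, the parameters $\al_j$ attached to the blocks $\cV_j^g$ in~\eqref{Pu} are the universal constants of~\eqref{alj} and do \emph{not} vary with the metric. Decomposing $u'$ along the subspaces $\cV_j^g$ and subtracting the two copies of~\eqref{Pu}, one finds that $(P_g-P_{g'})u'$ is a second-order operator applied to~$u'$ whose coefficients are, schematically, $x(w-w')$ in front of $\pd^2u'$, $\pd\big(x(w-w')\big)$ in front of $\pd u'$, and $x^{-1}(w-w')$ in front of~$u'$, the last being the image, under the subtraction, of the most singular $x^{-2}$-terms of $P_g$. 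Matching these powers of~$x$ against the weights built into the $\cH^m$ norms, every factor involving $u'$ and its conormal derivatives can be estimated in $L^\infty$ by Corollary~\ref{C.Cmr} — using that $\|u'\|_{s,r}$ is controlled by the problem data via Theorem~\ref{T.linear} — and a product estimate of the type of Theorems~\ref{T.Morrey}--\ref{T.multilinear}, placing the $\bL$ norm on the factor $w-w'$, then gives $\|(P_g-P_{g'})u'\|_{L^\infty_t\bL}\leq C_0\|w-w'\|_{1,0}$, with $C_0$ depending only on $\La$, $\de$, $\|F\|'_{s,r}$ and $\cC_{s,r}$.

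The only genuinely delicate point I expect is the one just described: controlling the metric-difference operator $P_g-P_{g'}$ while asking merely for the \emph{weak} norm $\|w-w'\|_{1,0}$ on the right-hand side. The resolution rests on two structural facts — that $\Bg-\Bg'=x(w-w')$ carries an extra power of~$x$, which simultaneously tames the $x^{-2}$-singularity of $P_g$ and, through the product estimates, leaves room to put the $L^\infty$ on the strong factor~$u'$; and that only the subspaces $\cV_j^g$, not the exponents $\al_j$, depend on~$g$, so the approximate diagonalization in~\eqref{Pu} is compatible for both metrics at once. Once these are in place, the remainder of the argument is a verbatim application of the energy method of Theorem~\ref{T.linealL} with vanishing initial data, so no new analytic ingredient is required, which is precisely why the statement follows quite easily from the work done in this section.
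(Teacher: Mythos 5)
Your overall architecture is the same as the paper's: pass to the zero-data Cauchy problem for $v:=u-u'$, apply the energy estimate of Theorem~\ref{T.linear} (so that $\cC_{1,0}=0$ and only the source term survives), and reduce everything to the Lipschitz bound $\|(P_g-P_{g'})\,\cdot\,\|_{L^\infty_t\bL}\leq C\|w-w'\|_{1,0}$. Your two structural observations --- that $\Bg-\Bg'=x(w-w')$ carries an extra power of~$x$ which tames the $x^{-1}$ and $x^{-2}$ coefficients of $P_g$, and that the exponents $\al_j$ are the universal constants of~\eqref{alj} so the block decomposition is compatible for both metrics --- are precisely the points the paper exploits. (A cosmetic remark: to make the constant depend on $\|F\|'_{s,r}$ and $\cC_{s,r}$ as stated, rather than on the primed data, one should write the equation for $v$ with $P_{g'}$ and apply the difference operator to $u$, as the paper does; this is immaterial in the application.)

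There is, however, a genuine gap in the one step you yourself flag as delicate: the distribution of norms in the product estimate. You propose to put the full $L^\infty$ norm on the factors involving $u'$ and the $\bL$ norm on $w-w'$. For the top-order terms this fails. The contribution of the principal part is schematically $x(w-w')\,\pd^2u'$, and an $L^\infty$ bound on $x\,\pd^2u'$ via Corollary~\ref{C.Cmr} requires $u'\in\cH^{2,\cdot}$ (two genuine adapted derivatives; note the restriction $i\leq m$ in that corollary), whereas the only regularity available here is $\|u'\|_{s,r}<\infty$, which is $\cH^1$-based --- one adapted spatial derivative plus conormal derivatives. The $\cH^{2,\cdot}$ control is obtained only later, in the higher-regularity step of Theorem~\ref{T.iteration}, and is not among the hypotheses of the proposition. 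The correct move is the mixed-norm split underlying Theorem~\ref{T.multilinear}, with the roles reversed relative to your proposal:
\[
\|x(w-w')\,\pd^2u\|_{\bL}\leq \|w-w'\|_{L^\infty_xL^2_\te}\,\|x\,\pd^2u\|_{\bLx L^\infty_\te}\,,
\]
where the first factor is controlled by $\|w-w'\|_{\cH^1}$ (one twisted radial derivative yields the sup in~$x$ via $A_\al^*$, and only $L^2_\te$ is demanded of the low-regularity factor, which matters because $w-w'$ has no spare angular derivatives), and the second is controlled by $\|u\|_{\cH^{1,r'+1}}$ with $r'>\frac{n-1}2$, writing $x\,\pd^2u$ as a conormal derivative of $\pd u$ and using Sobolev embedding on the sphere for the sup in~$\te$. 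With the norms redistributed in this way, the remainder of your argument goes through as written.
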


\begin{proof}
  A short computation using the expression for~$P_g$ shows that the differential operator $P_g$, whose
  leading part at $x=0$ is {\em not}\/ scalar, can be symbolically
  written in a neighborhood of $x=0$ as
\begin{equation}\label{Pgga}
P_g u=A_2(\Bg)\, \pd^2 u+\bigg(\frac{A_1(\Bg)}x+A_1'(\Bg,\pd\Bg)\bigg)\,\pd u+
\bigg(\frac{A_0(\Bg)}{x^2}+\frac{A_0'(\Bg,\pd\Bg)}x\bigg)\, u\,,
\end{equation}
where $A_j,A_j'$ are tensor-valued functions. Furthermore, we know that the term with second-order derivatives is
scalar, and given by~\eqref{Pu}. 

With $\Bg=\Bga+xw$, it then follows that $P_g$ agrees with $P_\ga$
modulo terms that are subdominant at $x=0$. More precisely, Theorem~\ref{T.multilinear} yields
\begin{align}
\|(P_{g}-P_{g'})u\| &\leq \sum_{k=0}^2
\bigg\|\frac{A_k(\Bg)-A_k(\Bg')}{x^{2-k}}\pd^k u \bigg\|+ \sum_{0}^1 \bigg\|\frac{A'_k(\Bg,\pd\Bg)-A'_k(\Bg',\pd\Bg')}{x^{1-k}}\pd^k u \bigg\|\notag\\
&\leq C\|w-w'\|_{1,0}\,,\label{P-Pbound}
\end{align}
with $\|\cdot\|$
denoting the $\bL$~norm and the constant~$C$ depending only on the
quantities discussed at the statement as a consequence of the 
estimates for~$u$ proved in Theorem~\ref{T.linear}. 

To see why this is true, let us consider a term that does not depend
on $\pd\Bg$, such as $A_2(\Bg)\,\pd^2 u$. Observe that, as the
$L^\infty$ norm of~$w$ and~$\pd w$ is bounded by a constant that
depends on~$\La$ by Theorem~\ref{T.Morrey}, it is standard that we
have
\[
|A(\Bg,\pd\Bg)-A(\Bg',\pd \Bg')|\leq C_0(|w-w'| + x|\pd w-\pd w'|)\,.
\]
Therefore,
\begin{align*}
\|(A_2(\Bg)-A_2(\Bg'))\, \pd^2 u\|&\leq \|(xw-xw')H(xw,xw')\pd^2 u\|\\
& \leq C\|(w-w') x\pd^2 u\|\\
& \leq C\|w-w'\|_{L^\infty_x L^2_\te}\|x\pd^2 u\|_{\bLx L^\infty_\te}\\
& \leq C\|w-w'\|_{\cH^1}\|u\|_{\cH^{1,r'+1}}\\
& \leq C\|w-w'\|_{1,0}\,.
\end{align*}
Here $H$ is a smooth tensor-valued function, $r'$ is any number larger
in $(\frac{n-1}2,r]$ and the constant~$C$ is as above. When 
derivatives of $\Bg$ are involved, the argument is similar. For instance,
\begin{align*}
\|(A_1'(\Bg,\pd\Bg)-A_1'(\Bg',\pd\Bg'))\, \pd u\|&\leq \|(w-w')H_1\pd
u\|+\|x(\pd w-\pd w')H_2\pd u\|\\
& \leq C\|(w-w') \pd u\|+ C\|(\pd w-\pd w') x\pd u\|\\
& \leq C\|w-w'\|_{L^\infty_x L^2_\te}\|\pd u\|_{\bLx L^\infty_\te} +
C\|\pd w-\pd w'\| \|x\pd u\|_{L^\infty}\\
& \leq C\|w-w'\|_{\cH^1}\|u\|_{\cH^{1,r'+1}}\\
& \leq C\|w-w'\|_{1,0}\,.
\end{align*}

To conclude the proof of the proposition, let us notice that
\[
P_{g'}(u-u')=F'-F+(P_{g}-P_{g'})u\,.
\]
Since
\[
\|(P_{g}-P_{g'})u\|_{1,0}'=\|(P_{g}-P_{g'})u\|_{L^\infty_t\bL}\leq C\|w-w'\|_{1,0}
\]
by~\eqref{P-Pbound}, Theorem~\ref{T.linear} then provides the desired control for the difference~$u-u'$.
\end{proof}

\section{Convergence of the iteration}
\label{S.convergence}

We are now ready to prove the existence of solutions to the equation
$Q(g)=0$ with the desired initial and boundary conditions. With the
technical tools that we have already developed, the argument is now
standard. 

To present the result, let us introduce a new norm that is stronger
than $\|u\|_{s,r}$ in the sense that it also includes additional (adapted)
derivatives with respect to the variable~$x$. To define it, we can
assume that the tensor field $u$ is supported in~$\cA$ and consider
its decomposition
\[
u=u^0+u^1+u^2+u^3\,,
\]
where $u^j\in\cV_j^\ga$. The norm is then defined using the
metric~$\ga$ as
\[
\triple u_{s,r}:=\|u\|_{s,r}+\sum_{j=0}^3\;\sum_{i+k+m\leq s-2}\|\bD_{x,\al_j}^{(2+i)}\pd_t^ku^j\|_{\cH^{0,r+m}}\,.
\]
For $s=1$ we simply take $\triple u_{1,r}:=\|u\|_{1,r}$. By
Theorem~\ref{T.bH} and the fact that $\al_j\geq \frac n2$, for $s<\frac n2+1$ this is equivalent to
\[
\triple u_{s,r}:=\|u\|_{s,r}+\sum_{i+k+m\leq s-2}\|\pd_t^ku\|_{\cH^{2+i,r+m}}\,,
\]
so in particular it does not depend on~$\ga$.  Likewise, for $s\in [\frac
n2+1,\frac n2+2)$ one can write
\[
\triple u_{s,r}:=\|u\|_{s,r}+\sum_{j=0}^3
\|\bD_{x,\al_j}^{(s)}u^j\|_{\cH^{0,r}}+ \sum_{i+k+m\leq
    s-2  \text{ and } i\leq s-3}\|\pd_t^ku\|_{\cH^{2+i,r+m}}\,,
\]
Of course, when $u$ is not supported in~$\cA$ one defines its
triple norm using a compactly supported function~$\chi$ e.g.\ as in
Equation~\eqref{bH}. It should be noticed that we will not only estimate
$u$, but also $x^\rho u$, as in the bound~\eqref{x-triple} below. The reason for
this is that this not only amounts to redistributing standard and
regularized derivatives as in Proposition~\ref{P.power}, but in fact
allows us to control~$\rho$ additional time derivatives of~$u$. This
will be useful to prove Theorem~\ref{T.main}.

\begin{theorem}\label{T.iteration}
Let us choose numbers $s$,  $r$, $l$ and~$p$  and take~$\ga\equiv \ga_l$ as in the
assumptions~(i)--(iii) of Section~\ref{S.linear}. For any compatible
initial and boundary data $(\tg,K,\hg)$, there is some time
$T>0$ and a function~$u$ such that the weakly asymptotically AdS metric
\[
g:=\ga+x^{\frac n2}u
\]
solves the modified Einstein equation $Q(g)=0$ in $(-T,T)\times\bola$
with the specified initial and boundary conditions and is bounded as
\begin{equation}\label{bound-triple}
\triple u_{s,r}< C
\end{equation}
with a constant depending only on
\[
\|x^2\tg\|_{C^{n-1}_{p-n+1}}+
\|x^2K\|_{C^{n-1}_{p-n}}+\|\hg\|_{C^p(I\times\bdry)}\,. 
\]
Furthermore, if $r>\frac{n-1}2+\rho$ with $\rho$ a positive integer,
we also have
\begin{equation}\label{x-triple}
\triple{x^\rho u}_{s+\rho,r-\rho}< C\,,
\end{equation}
and $\Bg\in C^\infty\poly$ if $\tg\in C^\infty$ and $x^2\tg, x^2K\in C^\infty\poly$.
\end{theorem}

\begin{proof}
For simplicity we will divide the proof in four steps. As usual, it is
enough to prove the estimates in a small neighborhood~$\cA$ of the
boundary. As before, we will write the metric as $\Bg=\Bga+x^{\frac
  n2+2}u$ and write the equation $Q(g)=0$ in the convenient
form~\eqref{EinsteinP}.\smallskip

\noindent{\em Estimates for the source terms.} Let us
begin by deriving some estimates for the functions~$\cF(u)$
and~$\cE(u)$ under the assumptions that 
\begin{equation}\label{ass-uX}
\|\Bga\|_{C^{n-1}_{p-n+1}}<\La,\quad \|u\|_{s,r}<\La,\quad \|\Bg^{\mu\nu}\|_{L^\infty}< \La\,;
\end{equation}
cf.\ Section~\ref{S.linear}. Just as in that section, we
will write the metric as 
$\Bg=\Bga+ xw$ with $w:=x^{\frac n2+1}u$ bounded in the norm~\eqref{hypoii}.
Throughout, we will denote by $C_0$ a
constant that only depends on~$\La$ and~$\de$ and we will use without
further mention the properties of the adapted Sobolev spaces that we
established in Sections~\ref{S.Sobolev} and~\ref{S.nonlinear}.

A close look at Equation~\eqref{tcF} reveals that the 
function $\cF(u)$ can be written as
\[
\cF(u)=\frac{F(\Bg)u}x\,,
\]
where $F(\Bg)$ is a smooth function of~$\Bg:= \Bga + x^{\frac n2+2}u$ (in particular, $\cF(u)$ does
not involve any derivatives of~$u$). Hence at any fixed time we have
\[
\|\cF(u)-\cF(u')\|\leq C_0\bigg\|\frac{u-u'}x\bigg\|\leq C_0\|u-u'\|_{\cH^1}\,,
\]
where $\|\cdot\|$ again stands for the $\bL$ norm, which implies
\begin{equation*}
\|\cF(u)-\cF(u')\|_{1,0}'\leq C_0\|u-u'\|_{1,0}\,.
\end{equation*}
Furthermore, by the elementary inequality $\|v/x\|_{\cH^{k,s}}\leq C\|v\|_{\cH^{k+1,s}}$,
\begin{align*}
\|\cF(u)\|_{s,r}'&= \sup_{|t|<T}\sum_{k=0}^{s-1}\|\pd_t^k\cF(u)\|_{\cH^{0,r+s-k-1}} \notag\\
& \leq
C_0\sup_{|t|<T}\sum_{k=1}^{s-1}\|\pd_t^ku\|_{\cH^{1,r+s-k-1}}
\notag\\
&\leq C_0\|u\|_{s,r}\,.
\end{align*}

Using the formula for $\cE(u)$ given in Equation~\eqref{tcE} and computing
the second derivative of $B$ as in Lemma~\ref{L.D2Q}, we infer that
$\cE(u)$ can be symbolically written as
\[
\cE(u)=\int_0^1x^{\frac n2} B(u,x\,\pd u)\, d\si\,,
\]
where $B$ is a quadratic form whose coefficients are smooth functions
of $\Bga+ \si x^{\frac n2+2} u$ and the integral is with respect to
the parameter~$\si$. Using this formula and arguing essentially as in the case of
$\cF(u)$ one can prove the analogous estimates
\begin{align*}
\|\cE(u)-\cE(u')\|_{1,0}'&\leq
C_0\|u-u'\|_{1,0}\,,\\
\|\cE(u)\|_{s,r}'&\leq C_0\|u\|_{s,r} \,.
\end{align*}
Hence it stems that the function $\cG(u):=\cF(u)+\cE(u)$ that appears
in Equation~\eqref{EinsteinP} satisfies the same bounds, that is,
\begin{align}
\|\cG(u)-\cG(u')\|_{1,0}'&\leq
C_0\|u-u'\|_{1,0}\label{Lip-cG}\,,\\
\|\cG(u)\|_{s,r}'&\leq C_0\|u\|_{s,r} \label{bound-cG}\,.
\end{align}

\smallskip

\noindent{\em Convergence in the low norm.} Our objective will be to
solve the equation using the iteration
\begin{subequations}\label{iteration}
\begin{equation}
P_{g^{m}}u^{m+1}=\cF_0+\cG(u^{m})\,,
\end{equation}
where $g^m:= \ga+ x^{\frac n2}u^m$ and the initial conditions
that we impose are
\begin{equation}
u^{m+1}|_{t=0}=u_0\,,\qquad \pd_t u^{m+1}|_{t=0}=u_1\,,
\end{equation}
\end{subequations}
where of course $u_j:=x^{-\frac n2}(g_j-\pd_t^j\ga|_{t=0})$. We can start the
iteration with $u^1:=0$ and the desired solution to the equation
$Q(u)=0$ will arise as the limit of $u^m$ as $m\to\infty$. Notice that
we are using superscripts both for the sequence of iterates and for the
components of $u$ in the space $\cV_j^\ga$, but this should not cause
any confusion because only the former will appear in the study of the
convergence of the sequence.

Let us assume that the condition~\eqref{ass-uX} is satisfied, where
$\La$ is chosen so that
\begin{equation}\label{La2}
\|\Bga\|_{C^{n-1}_{p-n+1}}+\|\Bg^{\mu\nu}|_{t=0}\|_{L^\infty}+\cC_{s,r}+\|\cF_0\|_{s,r}'<\frac\La2\,.
\end{equation}
Recall that, by Theorem~\ref{T.peeling},
\[
\|\cF_0\|_{s,r}'\leq
C\|\cF_0\|_{C^1_{s+r-1}(I\times\bola)}\leq C
\|\hg\|_{C^p(I\times\bdry)}\,,
\]
where we have used that $l\geq s+\frac n2+2$ and $p\geq l+s+r+1$, so this just means that we choose $\La$ in terms of the sizes of the initial and
boundary data.

To prove the convergence of the sequence in the
norm~$\|\cdot\|_{1,0}$, then we can use
Proposition~\ref{P.difference} and the estimate~\eqref{Lip-cG} to write, for $T<T_0$,
\begin{align}
\|u^{m+1}-u^m\|_{1,0}&\leq C_0T\|\cG(u^m)-\cG(u^{m-1})\|_{1,0}' +C_0T
\|u^m-u^{m-1}\|_{1,0}\notag\\
&\leq C_0T\|u^m-u^{m-1}\|_{1,0}\,. \label{lowbound}
\end{align}
It then follows that the sequence $(u^m)_{m=1}^\infty$ converges in
the norm~$\|\cdot\|_{1,0}$ to some $u\in L^\infty_t\cH^1\cap
W^{1,\infty}_t \bL$, provided that $T$ is smaller than some constant
depending only on~$\La$ (i.e., $T<1/(2C_0)$).

\smallskip

\noindent{\em Boundedness in the high norm.} Let us assume that the
bound~\eqref{ass-uX} is satisfied up to the $m^{\text{th}}$ step of the iteration with $\La$ chosen so
that~\eqref{La2} holds. Writing $g^m=\ga+x w^m$ with
$w^m:=x^{\frac n2+1}u^m$, we then
infer that the assumptions on the metric of Section~\ref{S.linear} are
satisfied too. Hence applying Theorem~\ref{T.linear} to
Equation~\eqref{iteration} immediately yields, for $T<T_0$,
\begin{align}\label{um1}
\|u^{m+1}\|_{s,r}\leq e^{C_0T}\cC_{s,r}+
C_0 T\big(\|\cF_0\|_{s,r}'+\|\cG(u^{m})\|_{s,r}'\big)\,.
\end{align}
If
we employ that $\|\cF_0\|_{s,r}'<\La/2$ in the inequality~\eqref{um1} and use the
estimate~\eqref{bound-cG}, we arrive at
\begin{align}
\|u^{m+1}\|_{s,r}&\leq e^{C_0T}\cC_{s,r}+C_0T\|u^m\|_{s,r}\notag\\
&\leq (e^{C_0T}+C_0T)\frac\La2\notag\\
&< \La \label{highbound}
\end{align}
provided that $T$ is small enough. 

Since the sequence $(u^m)$ is bounded in $\|\cdot\|_{s,r}$
by~\eqref{highbound} and converges to $u$ in~$\|\cdot\|_{1,0}$
by~\eqref{lowbound}, together with the fact that these spaces possess good
interpolation properties (essentially as a
consequence of the formula~\eqref{cHs}), we immediately obtain that
$u^m\to u$ in $\|\cdot\|_{s',r}$ for any real $s'<s$ and that~$u$ also
satisfies the bound $\|u\|_{s,r}\leq \La$. The usual argument then shows (cf.\
e.g.~\cite[Chapter~9]{Ringstrom}) that~$u$ is indeed a solution of the equation
$Q(g)=0$ in $(-T,T)\times \bola$, with $T$ small enough, and that~$u$
is bounded by
\begin{equation}\label{goodbound-u}
\|u\|_{s,r}<\La
\end{equation}
as a consequence of~\eqref{highbound}.

\smallskip

\noindent{\em Higher spatial regularity.} Our
goal now is to show that, if $u$ satisfies the equation $Q(g)=0$,
up to $s$ adapted derivatives of~$u$ can
then be controlled in terms of the energy $E_{s,r}[u]$. More
precisely, need to prove that
\begin{equation}\label{xtraders}
\sum_{j=0}^3\;\sum_{i+k+m\leq
  s-2}\|\bD_{x,\al_j}^{(2+i)}\pd_t^ku^j\|_{L^\infty_t\cH^{0,r+m}}\leq
C_0\|u\|_{s,r} + C_0\sum_{i+k+m\leq s-2}\|\pd_t^k \cF_0\|_{\cH^{i,r+m}}\,.
\end{equation}
Since $l\geq s+\frac n2+2$ and $p\geq l+s+r+1$,
Theorem~\ref{T.peeling} then asserts that
\begin{align*}
\sum_{i+k+m\leq s-2}\|\pd_t^k \cF_0\|_{\cH^{i,r+m}}&\leq
C\|x^{2-s}\cF_0\|_{C^0_{s+r-2}(I\times\bola)}\\
&\leq C \La\,.
\end{align*}
Hence the desired bound~\eqref{bound-triple} follows from the inequality~\eqref{xtraders}
and the estimate~\eqref{goodbound-u}. 

The estimates~\eqref{xtraders} are proved by isolating the term
$\bD_{x,\al}^{(2)}u$ in the equation~$Q(g)=0$, which we write as
\[
P_gu=\cF_0+\cG(u)
\]
with $g=\ga+ x^{\frac n2}u$.  Once the term $\bD_{x,\al}^{(2)}u$ has
been isolated, we can take the necessary number of adapted $x$-derivatives
for which we need a priori estimates. For
concreteness, let us spell out the details for the first quantity,
namely the norm $\|\bD_{x,\al}^{(2)}u\|_{L^\infty_t\cH^{0,r+s-2}}$.

From Equation~\eqref{LuF} we
can write
\begin{multline}\label{isolate}
\bD_{x,\al_j}^{(2)}u^j = \frac1{b^1}\Big(\cF_0^j+ \cG(u)^j+\pd_t^2 u^j -(\pd_x b^1)\bD_{x,\al_j}u^j
-\pd_i^*(G^{ik}\pd_k u^j)-x\pd_i^*[(b^2)^i\pd_x u^j]\\
-x\pd_x(b^3\pd_t u^j)-\pd_i[(b^4)^i\pd_ t u^j]+\lot\Big)\,,
\end{multline}
where the superscript~$j$ indicates the component in $\cV^\ga_j$ and we have employed the identity~\eqref{Pgga} to write
\[
P_gu=P_\ga u+\lot
\]
using the same ideas as in the proof of Proposition~\ref{P-Pbound}.
Besides, we have used that, as thanks to our choice of the number
$s,r$ we have the uniform bound
\[
\|\Bg-\Bg|_{t=0}\|_{L^\infty}\leq CT\,,
\]
Equation~\eqref{bs} guarantees that we can indeed divide by $b^1$ to solve
the equation for $\bD_{x,\al_j}^{(2)}u$. To compute the norm
$\|\bD_{x,\al_j}^{(2)}u^j\|_{\cH^{0,r+s-2}}$ we must now consider the
action of the differential operator $\cD_{k,\be}$ on this equation,
with $l+|\be|\leq r+s-2$ and $\cD_{k,\be}$ defined as
in~\eqref{cDkbe}. Given the dependence on~$u$ of the various terms
that appear in the equation, a straightforward computation shows that
in fact the terms that appear can indeed be controlled
using the norm~$\|u\|_{s,r}$ and Theorems~\ref{T.Morrey}
and~\ref{T.multilinear} as
\begin{equation}
\|\bD_{x,\al_j}^{(2)}u^j\|_{L^\infty_t\cH^{0,r+s-2}} \leq
C_0\|u\|_{s,r}+C_0 \|\cF_0\|_{L^\infty_t\cH^{0,r+s-2}} \,.
\end{equation}
Although we will not write down the tedious but straightforward
minutiae, it is clear from~\eqref{isolate}, e.g., that the most
dangerous terms that can appear when one estimates
$\|\bD_{x,\al_j}^{(2)}u^j\|_{\cH^{0,r+s-2}}$ are of the symbolic form
\begin{align*}
  \|F(u)\pd_t^2 u\|_{\cH^{0,r+s-2}}+ \|F(u)x\pd_x\pd_\te
  u\|_{\cH^{0,r+s-2}} +\|F(u)x\pd_x\pd_t u\|_{0,r+s-2}\,,
\end{align*}
and these are clearly controlled by~$\|u\|_{s,r}$.

Now that we have estimated $\|\bD_{x,\al_j}^{(2)}u^j\|_{\cH^{0,r+s-2}}$,
which gives control over~$\|u\|_{L^\infty_t\cH^{2,r+s-2}}$, 
we can easily obtain bounds for
$\|\bD_{x,\al_j}^{(2)}\pd_t^ku\|_{L^\infty_t\cH^{0,r+s-2-l}}$ by
taking time derivatives in Equation~\eqref{isolate} and repeating the
argument. Estimates for the other terms
$\|\bD_{x,\al_j}^{(2+i)}\pd_t^ku^j\|_{L^\infty_t\cH^{0,r+k}}$ are then
obtained by successively acting with $\bD_{x,\al}^{(i)}$ on
Equation~\eqref{isolate}, with $i=1,2\dots,s-2$. The only difference is
that one has to use
that, by the choice of the range of parameters made in the assumptions~(i)--(iii), the norms~$\|\cdot\|_{\bH^{s',r'}_\al}$ and~$\|\cdot\|_{\cH^{s',r'}}$
are equivalent by Theorem~\ref{T.bH} for all $s'< \frac n2+1$.

\smallskip

\noindent{\em Additional time derivatives and $C^\infty$ estimates.} The proof of the a priori
estimate~\eqref{x-triple} is, in a way, analogous to that
of~\eqref{xtraders}. If we now isolate $\pd_t^2u$ in
Equation~\eqref{isolate}, we find that the component $u^j\in\cV_j^\ga$
satisfies the equation
\begin{multline}\label{isolate-t}
\pd_t^2 u^j = \cG(u)^j-\bD_{x,\al_j}^{(2)}u^j-(\pd_x b^1)\bD_{x,\al_j}u^j
-\pd_i^*(G^{ik}\pd_k u^j)-x\pd_i^*[(b^2)^i\pd_x u^j]\\
-x\pd_x(b^3\pd_t u^j)-\pd_i[(b^4)^i\pd_ t u^j]+\lot
\end{multline}
Multiplying by $x^\rho$, taking $s-1$ derivatives with respect
to~$t$ and using the bound $\triple u_{s,r}<C\de$, we immediately find that
$x^\rho \pd_t^{s+1}u$ satisfies
\[
\|x^\rho \pd_t^{s+1}u\|_{L^\infty_t\cH^{0,r-1}}<C\,.
\]
Likewise, by successively taking $s-2+i$ time derivatives in~\eqref{isolate-t} and repeating the argument, we readily obtain the
bound
\[
\|x^\rho \pd_t^{s+i}u\|_{L^\infty_t\cH^{0,r-i}}<C
\]
for $2\leq i\leq \rho$.

The fact that the solution is smooth in the polyhomogeneous sense if
the initial and boundary data are is a straightforward consequence of
Theorem~\ref{C.Cmr} and
the persistence of regularity principle (see e.g.~\cite{Tao}), which
just means that the time of existence~$T$ does
not depend on the choice of the integer~$s,r$ as long as they are
large enough, so that in this case $\triple{u}_{s,r}$ is finite (although
not uniformly bounded) for all $s,r$ (of course, $\Bga$ is smoothly
polyhomogeneous by construction). This completes the proof of the theorem.
\end{proof}

The statement about the existence of $C^q$ metrics that appears in the
statement of Theorem~\ref{T.main} is an immediate consequence of
Theorem~\ref{T.iteration} due to Corollary~\ref{C.Cmr} provided that
the initial and boundary data are smooth enough. Specifically, by
keeping track of the various choices of exponents that we have made in
the preceding sections we arrive at the following

\begin{corollary}\label{C.numerology}
Given any $q\geq n-1$, let us choose an integer $p>2q+\frac52
n+7$. If $\hg\in C^p$,
$x^2\tg,x^2K\in C^{n-1}\cap C^p\poly$ and they satisfy the constraint
equations and the compatibility
conditions to order~$q$, then there exists a $T>0$ and a unique solution to the
equation $Q(g)=0$ on $(-T,T)\times \bola$ with the above initial and
boundary data, which is of class
$\Bg\in C^{n-1}\cap C^q\poly$.
\end{corollary}

\section{DeTurck's trick revisited}
\label{S.DeTurck}

Corollary~\ref{C.numerology} provides a weakly asymptotically AdS
metric~$g$ that solves the equation $Q(g)=0$ in $(-T,T)\times\bola$,
satisfies the desired initial and boundary conditions.
Our objective in this section is to show that~$g$ is also a
solution of the Einstein equation $\Ric(g)=-ng$, which completes the
proof of Theorem~\ref{T.main}. The standard way of
proving this is via the so-called DeTurck's trick. A textbook presentation of this
method can be found~in~\cite[Chapter 14]{Ringstrom} (see also~\cite{HE}), so we will only
sketch the main ideas and refer to this book for further details. It
should be noticed, however, that the lack of global hyperbolicity and
the fact that the equations that appear are singular at the conformal
boundary ensure that an additional effort is necessary to show that
DeTurck's method actually works in the situation that we are
considering. Fortunately, the estimates that we have derived in the
previous sections of this paper are well suited for this task.

The key idea in DeTurck's method is that, if $Q(g)=0$, the 1-form $W$ introduced
in~\eqref{W} to break the gauge invariance of the Einstein equation
must satisfy the linear hyperbolic equation
\begin{equation}\label{eqW}
\square_g W_\mu+R^\nu_\mu W_\nu=0\,,
\end{equation}
where $R^\nu_\mu:=g^{\nu\la}R_{\mu\la}$ is the tensor obtained by
raising an index of the Ricci tensor of the metric~$g$. When the
metric~$g$ is globally hyperbolic, it is immediate that if $W_\mu=0$
and $\pd_t W_\mu=0$ at $t=0$, then $W\equiv 0$ for all time, which
readily implies that the metric satisfies the Einstein equation
$\Ric(g)=-ng$ because of the structure of the operator~$Q$.

The difficulty here is that Equation~\eqref{eqW} is not globally
hyperbolic. In fact, since~$g$ is weakly asymptotically AdS (which
ensures that $g=x^{-2}\Bg$ for some~$\Bg$ smooth enough up to the
boundary and such that $\Bg^{\mu\nu}x_\mu x_\nu =1$ on $(-T,T)\times\bdry$), a
tedious computation shows that, in~$\cA$, Equation~\eqref{eqW} reads as
\begin{equation}\label{eqW2}
g^{\la\nu}\pd_\la\pd_\nu W_\mu+\frac{(3-n)\, \pd_x W_\mu}x-\frac{n
  W_\mu+(n-1) \Bg^{\la\nu} x_\la W_\nu\, x_\mu}{x^2}+\lot\,,
\end{equation}
where $\lot$ stand for terms with at most one derivative of~$W$ that are smaller at $x=0$ (i.e., they
are of the form  $\cO(1)\, \pd W+\cO(x^{-1})W$). 

Let us now write $W=:W^0+W^3$, with
\[
(W^0)_\mu:= \frac{\Bg^{\la\nu}x_\la W_\nu}{|dx|^2_{\Bg}}x_\mu\,.
\]
This decomposition diagonalizes~\eqref{eqW2} in the sense that the
leading terms of the equation (both in terms of derivatives and
singular behavior at the boundary) are now controlled by scalar operators:
\begin{align*}
\cL_0W_0&:=\bigg(g^{\la\nu}\pd_\la\pd_\nu
+\frac{3-n}x\pd_x-\frac{3n-1}{x^2}\bigg)W_0+\lot\,,\\
\cL_3W_3&:=\bigg(g^{\la\nu}\pd_\la\pd_\nu
+\frac{3-n}x\pd_x-\frac{2n}{x^2}\bigg)W_3+\lot\,,
\end{align*}
where again $\lot$ stands for lower-order terms that are smaller at $x=0$.
Setting $W_j=:x^{\frac n2-1}V_j$ for $j=0,3$, we can now write
\[
\cL_jW_j=:x^{\frac n2-1}\cP_jV_j\,,
\]
where in~$\cA$ the linear operator $\cP_j$ reads as
\begin{multline*}
\cP_jV_j=\bar g^{00}\, \Big(\pd_t^2+
\pd_{\te^i}^*G^{ik}\pd_{\te^k}+
\bD^*_{x,\al_j} b^1\bD_{x,\al_j} + x\, \pd_{\te}^*
\tilde b^2\pd_x\\
+x\pd_x^*\tilde b^3\pd_{\te}+
x\pd_x\tilde b^4\pd_t+x\pd_{\te}\tilde b^5\pd_t\Big)\, V_j\\
+\Big(\tilde b^6 x \pd_xV_j+x \tilde b^7 \pd_tV_j + x \tilde b^8
\pd_{\te}V_j + \tilde b^9 V_j\Big)
\end{multline*}
with $\al_0$ and $\al_3$ defined in Equation~\eqref{alj} 

Since this has the same structure as the operator $\cP_g$ considered
in~\eqref{Pu}, a minor variation of Theorem~\ref{T.linear} proves, in
particular, that any solution $V:=V_0+V_3$ must vanish identically in
$(-T,T)\times\bola$ if it has zero boundary and initial
conditions. The compatibility conditions for the
initial and boundary conditions guarantee that this is indeed the case
(cf.~Appendix~\ref{A.constraint}), so we have proved the following

\begin{theorem}\label{T.DeTurck}
The metric~$g$ constructed in Theorem~\ref{T.iteration} (or Corollary~\ref{C.numerology}) solves the Einstein
equation $\Ric(g)=-ng$ in $(-T,T)\times\bola$.
\end{theorem}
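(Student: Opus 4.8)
The plan is to deduce $\Ric(g)=-ng$ from $Q(g)=0$ by showing that the DeTurck $1$-form $W$ defined in~\eqref{W} vanishes identically on $(-T,T)\times\ball$; once $W\equiv0$, the structure~\eqref{Q} of the modified operator gives the Einstein equation at once. The starting point is the classical fact that, whenever $Q(g)=0$, the contracted second Bianchi identity forces $W$ to obey the linear wave-type equation~\eqref{eqW}. This derivation is purely algebraic and is insensitive to the signature and to the asymptotics, so it carries over verbatim from the globally hyperbolic setting (see~\cite[Chapter~14]{Ringstrom}). The nontrivial point is that~\eqref{eqW} is neither globally hyperbolic nor regular at $x=0$, so the usual conclusion ``$W=\pd_tW=0$ at $t=0$ implies $W\equiv0$'' is not immediate and must be replaced by an energy argument in the adapted Sobolev spaces of Section~\ref{S.Sobolev}.

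First I would expand~\eqref{eqW} near the conformal boundary, using that $g=x^{-2}\Bg$ with $\Bg$ weakly asymptotically AdS and $|dx|_{\Bg}=1$ on the boundary, to reach the explicit form~\eqref{eqW2}; the only input is the expansion of the Laplacian and Ricci tensor already recorded in Section~\ref{S.peeling}. Next I would diagonalise the leading part by splitting $W=W^0+W^3$ along and transverse to $dx$, in the spirit of the decomposition of $\cS^2$ in Proposition~\ref{P.Vj}: this turns~\eqref{eqW2} into two decoupled scalar equations $\cL_0W_0=\lot$, $\cL_3W_3=\lot$ modulo terms subdominant at $x=0$, with indicial roots governed by the constants $\al_0,\al_3$ of~\eqref{alj}. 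Substituting $W_j=x^{\frac n2-1}V_j$ then recasts these as $\cP_jV_j=\lot$, where $\cP_j$ has precisely the structure of the operator $P_g$ analysed in~\eqref{Pu} (a twisted Laplacian $\bD^*_{x,\al_j}b^1\bD_{x,\al_j}$ together with lower-order terms carrying the same powers of $x$).

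With this normal form in hand, the core step is to invoke the energy estimate of Theorem~\ref{T.linear}, or rather the minor variant of Theorem~\ref{T.linealL} obtained by treating each component $V_j$ with its own parameter $\al_j$, applied to the homogeneous problem with zero source: it yields $\|V\|_{s,r}\leq C_0\e^{C_0T}\cC_{s,r}$, so if both the initial data and the boundary data of $V$ vanish then $V\equiv0$, hence $W\equiv0$. The vanishing of the boundary data of $W$ is built in, since the boundary condition $\pullback\Bg=\hg$ and the definition of $\gref$ force $W|_{x=0}=0$ and, after dividing by $x^{\frac n2-1}$, also $V|_{x=0}=0$. The vanishing of the initial data for $W$, namely $W_\mu=0$ and $\pd_tW_\mu=0$ on $\{t=0\}$, is exactly what the Hamiltonian and momentum constraints together with the compatibility conditions imposed on $(g_0,g_1,\hg)$ guarantee; this is discharged in Appendix~\ref{A.constraint}.

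The main obstacle I anticipate is \emph{not} the energy estimate itself, which is essentially a rerun of Theorem~\ref{T.linealL}, but rather two bookkeeping points: (i) checking that the $\lot$ terms in~\eqref{eqW2} genuinely reduce to $\cO(1)\,\pd W+\cO(x^{-1})W$ after the substitution $W_j=x^{\frac n2-1}V_j$, so that the powers of $x$ line up to close the weighted energy estimate, which is again where the various powers of $x$ in the Christoffel symbols of a weakly asymptotically AdS metric must conspire, just as in the proof of Theorem~\ref{T.iteration}; and (ii) verifying that the coupling between $W^0$ and $W^3$ through the $\lot$ terms is subdominant, so that a single Gr\"onwall argument on $E_{s,r}[V_0]+E_{s,r}[V_3]$ suffices despite the mismatch $\al_0\neq\al_3$. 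Both are controlled by the structural features of the metrics $\ga_l$ already exploited, so Theorem~\ref{T.DeTurck} comes out as a corollary of the machinery of Sections~\ref{S.Sobolev}--\ref{S.linear} rather than requiring a fresh estimate.
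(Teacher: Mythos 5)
Your proposal follows the paper's own argument essentially step for step: the wave equation~\eqref{eqW} for $W$ from the Bianchi identity, the near-boundary expansion~\eqref{eqW2}, the splitting $W=W^0+W^3$ along and transverse to $dx$, the substitution $W_j=x^{\frac n2-1}V_j$ bringing the equation into the form $\cP_jV_j=\lot$ with the structure of~\eqref{Pu}, and the conclusion via the energy estimate of Theorem~\ref{T.linear} together with the vanishing of the initial data guaranteed by the constraint and compatibility conditions of Appendix~\ref{A.constraint}. This is the same proof; no substantive differences to report.
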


The main result of the paper (Theorem~\ref{T.main}) then follows.

\appendix

\section{Constraint equations and compatibility conditions}
\label{A.constraint}

In this appendix we recall the constraints that
must be satisfied by the initial and boundary data of the Einstein
equations $\Ric(g)+ng=0$. We refer to~\cite{AC} for details.

The initial and boundary conditions are a
Riemannian metric $\tg_{ij}$ on the $n$-dimensional manifold $\bola$, a
second-order tensor $K_{ij}$ on $\bola$ and a Lorentzian metric
$\hg_{\al\be}$ on $\RR\times\bdry$. We also need a function $x$ on
$\obola$, which we assume to be $C^\infty$ up to the boundary. The
connection of these objects with the the Lorentzian Einstein
metric~$g$ on $(-T,T)\times \bola$ is that $\hg_{\al\be}$ is the pullback of $\Bg_{\mu\nu}:=x^2 g_{\mu\nu}$ to
$(-T,T)\times\bdry$, $\tg_{ij}$ is the pullback of~$g_{\mu\nu}$ to the Cauchy surface
$\{0\}\times M$ and $K_{ij}$ is the second fundamental form of the
Cauchy surface in $(-T,T)\times M$ with respect to the
metric~$g_{\mu\nu}$. In terms of regularity, we assume that
$\hg_{\al\be}$ is of class $C^p((-T_,0,T_0)\times\bdry)$, that
$x^2 \tg_{ij}$ is in $C^{n-1}(\obola)\cap C^p\poly(\obola)$ and that $K_{ij}$
can be written as
\[
K_{ij}=\frac1x L_{ij}+\frac1n \cK \tg_{ij}\,,
\]
where $L_{ij}$ is traceless (that is, $\tg^{ij}L_{ij}=0$, so $\cK=\tg^{ij}K_{ij}$) and $L_{ij},\cK\in
C^{n-1}(\obola)\cap C^{q-1}\poly(\obola)$. With some abuse of
notation, throughout this paper we use the shorthand notation
\[
\|x^2K\|_{C^{p-1}_{n-1}}:=\|L_{ij}\|_{C^{p-1}_{n-1}}+\|\cK\|_{C^{p-1}_{n-1}}\,,
\]
and when we say that $x^2K_{ij}$ is in $C^{n-1}\cap C^{p-1}\poly$ we mean that
$L_{ij}, \cK\in C^{n-1}\cap C^{p-1}\poly$. We recall that the
estimates in~\cite{AC} control precisely these quantities (in addition
to $x^2 \tg_{ij}$).

The way to compute $\pd_t^k g_{\mu\nu}|_{t=0}$ from the initial data $(\tg,K)$
is well know, the only difference being that one must take care of the
powers of $x$ that characterize the behavior at infinity of the
metric. An economic way of doing this (see
e.g.~\cite[Section~7.5]{CB} for details) is by embedding $M$ in the
product $(-T_0,T_0)\times M$ and choosing $t\in (-T_0,T_0)$ as a time
coordinate. We can then identity $\{t=0\}$ with $M$ and set, for any
local coordinates on~$M$,
\[
g_{ij}|_{t=0}=\tg_{ij}\,,\qquad g_{ti}|_{t=0}=0\,,\,,\qquad g_{tt}=-x^{-2}\,.
\]
The condition that $K$ be the second fundamental form of
the spatial hypersurface $\{t=0\}$ translates into
\[
\pd_t g_{ij}|_{t=0}=\frac2xK_{ij}
\]
while the time derivatives of the coefficients $g_{t\mu}$ at $0$ are chosen so
as to ensure that the 1-form $W$ (cf.\ Equation~\eqref{W}) vanishes at
$t=0$. Higher order time derivatives of the metric a time~0 can then
be computed from the equation $Q(g)=0$. Because of
Proposition~\ref{P.Qsing} we assume that 
\[
1=\Bg^{\mu\nu}x_\mu x_\nu|_{\{0\}\times\bdry}=\overline{\tg}^{ij}\,
\pd_ix\,\pd_j x|_{\bdry} \,,
\]
where $\overline\tg^{ij}$ is the inverse of
$\overline\tg_{ij}:=x^2g_{ij}$.

The initial data $(\tg_{ij},K_{ij})$ cannot be chosen freely, as the following {\em constraint
  equations}\/ must be satisfied:
\begin{subequations}\label{eq.constraint}
\begin{align}
\widetilde R -K_{ij}K^{ij} + \cK^2&=-n(n-1)\,,\\
\widetilde \nabla^j K_{ji}-\widetilde\nabla_i \cK&=0\,.
\end{align}
\end{subequations}
Here the quantities with tildes are computed using the Riemannian
metric $\tg$, $\widetilde R$ stands for the scalar curvature
of~$\tg$ and indices are raised and lowered using this metric. The proof
goes exactly as in~\cite{Ringstrom}.

This kind of initial data, with the assumption that the objects should
be $C^\infty$ up to the boundary, were considered by Friedrich in his
breakthrough paper~\cite{Friedrich} to construct space-times with
AdS-type behavior at space-like infinity in dimension~4. This has been
discussed in more generality in K\'ann\'ar~\cite{Kannar}. Andersson and Chrusciel~\cite{AC} have established the existence of
many solutions with the right behavior at infinity to the constraint
equations under the additional assumption that $\cK$ is constant, that
it,
\[
\widetilde\nabla_i\cK=0\,.
\]
This extra hypothesis is used to decouple the scalar and vector
constraint equations. These solutions are ``labeled'' by a symmetric
traceless tensor $A^{ij}$ that is sufficiently smooth up to the
boundary (say, in $C^\infty(\obola)$). It is worth mentioning that,
generically, the resulting solutions $(\overline\tg_{ij},L_{ij},\cK)$
are not arbitrarily smooth up to the boundary due to the appearance of
log terms: they are generically in $C^{n-1}\cap C^\infty\poly$,
although there are also ``many'' nontrivial solutions that are smooth up to the
boundary, in which the log terms are absent.

Additionally, one must consider {compatibility conditions} between the initial
conditions $(\tg,K)$ and the boundary datum $\hg$. As is well-known,
solving the Einstein equation in a bounded domain with nontrivial
boundary conditions on the boundary is usually problematic (see
e.g.~\cite{FN} and references therein). Fortunately, in this setting we
can exploit the fact that the metric we want to construct is
asymptotically anti-de Sitter to obtain a manageable set
of compatibility conditions: one have fixed the integers $s,r$ (with
$s+r\leq p$), we only need to impose that the functions
$u_k$, defined in~\eqref{u01} and~\eqref{uk}, belong to $\cH^{1,r}$
for $0\leq k\leq s-1$ and to $\bL$ for $k=s$.  This integrability
condition at infinity is enough to ensure that the arguments in
the paper make sense, essentially because we can integrate by parts in
the proof of Theorem~\ref{T.linear}.
A more intuitive way of understanding this condition is that it is tantamount to saying that
the formal solutions that we calculate at $t=0$
using $(\tg,K)$ (that is, $\pd_t^k\Bg|_{t=0}$ as computed above) and
at $x=0$ using the boundary data (the metrics $\Bga_l$ of
Theorem~\ref{T.peeling} with $l\geq q$) must agree to order~$q$.

\section{Some estimates for the operators $A_\al$ and $A_\al^*$}
\label{A.AA}

The integral operators $A_\al$ and $A_\al^*$, defined
in~\eqref{defAA*}, play a key role in some arguments presented in
Sections~\ref{S.Sobolev} and~\ref{S.nonlinear}. Therefore we will
record here some estimates the we proved in~\cite[Theorem~3.1 and
Proposition~3.3]{JMPA}, where as usual we assume that $\al>1$. For the
benefit of the reader, we also include a sketch of the proof.

\begin{theorem}[\cite{JMPA}]\label{T.AA*}
The following statements hold:
\begin{enumerate}
\item Acting on one-variable functions, the operators $A_\al$ and $A_\al^*$ define continuous maps
\[
\bLx\to L^\infty_x\,.
\] 

\item The operators $\frac1x A_\al$ and $\frac1xA_\al^*$ are continuous maps
\[
\bLx\to \bLx \qquad \text{and} \qquad \bL\to\bL\,.
\]

\item If $ u$ is a function in $\bL(\cA)$ with $\bD_{x,\al} u$ in
  $\bL(\cA)$, then
\[
 u(x,\te)=(A_\al \bD_{x,\al} u)(x,\te)\,.
\]

\item If $ u$ is a function in $\bL(\cA)$ with $\bD_{x,\al}^* u$ in
  $\bL(\cA)$, then
\[
 u(x,\te)=(A_\al^*\bD_{x,\al}^*  u)(x,\te)+f(\te)\, x^{\al-1}\,,
\]
the function $f(\te)$ being bounded in $L^2_\te\equiv L^2(\bdry)$ by  
\[
\|f\|_{L^2_\te}\leq C (\| u\|_{\bL}+ \|\bD_{x,\al}u\|_\bL)\,.
\]
\end{enumerate}
\end{theorem}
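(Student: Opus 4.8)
The plan is to prove the four statements in order, and then use (i) together with the integrating‑factor descriptions of $\bD_{x,\al}$ and $\bD_{x,\al}^*$ when proving (iii) and (iv).

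\textbf{Statements (i)--(ii).} These are one‑dimensional weighted Hardy‑type inequalities, so it suffices to prove them for functions of $x$ alone and then integrate the resulting bounds over $\es$. For (i) I would apply Cauchy--Schwarz to $A_\al\vp(x)=x^{-\al}\int_0^x y^\al\vp(y)\,dy$ after splitting $y^\al=y^{\al-1/2}\cdot y^{1/2}$, which gives $|A_\al\vp(x)|\le x^{-\al}\big(\int_0^x y^{2\al-1}\,dy\big)^{1/2}\|\vp\|_{\bLx}=(2\al)^{-1/2}\|\vp\|_{\bLx}$; the same computation bounds $A_\al^*\vp$ by $(2\al-2)^{-1/2}\|\vp\|_{\bLx}$, and the convergence of $\int_x^1 y^{1-2\al}\,dy$ is the first place where $\al>1$ is used in an essential way. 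For (ii), writing $\vp(y)=y^{-1/2}\psi(y)$ identifies $\|x^{-1}A_\al\vp\|_{\bLx}$ with the $L^2((0,\infty),dx)$‑norm of the integral operator with kernel $x^{-\al-1/2}y^{\al-1/2}\mathbf{1}_{\{y<x\}}$, which I would estimate by Schur's test with the weight $x^{-1/2}$; an identical argument with kernel $x^{\al-3/2}y^{1/2-\al}\mathbf{1}_{\{x<y\}}$ handles $x^{-1}A_\al^*$ (again requiring $\al>1$, so that the Schur weight integrals converge). The $\bL\to\bL$ assertions then follow from Fubini.

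\textbf{Statement (iii).} The key identity is $\bD_{x,\al}u=x^{-\al}\pd_x(x^\al u)$, so $y^\al\bD_{x,\al}u=\pd_y(y^\al u)$. Since $\bD_{x,\al}u\in\bL(\cA)$ forces $y^\al\bD_{x,\al}u(\cdot,\te)$ to be integrable near $x=0$ for a.e.\ $\te$ (Cauchy--Schwarz, using $\al>0$), the map $y\mapsto y^\al u(y,\te)$ has an absolutely continuous representative near $x=0$, so that $c(\te):=\lim_{y\to0^+}y^\al u(y,\te)$ exists and $u(y,\te)=c(\te)\,y^{-\al}+(A_\al\bD_{x,\al}u)(y,\te)$. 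Now $A_\al\bD_{x,\al}u\in L^\infty$ by (i) while $u(\cdot,\te)\in\bLx$, so $c(\te)\,y^{-\al}$ would have to lie in $\bLx$ near $x=0$; this is impossible unless $c(\te)=0$, because $\int_0^\ep y^{1-2\al}\,dy=\infty$ for $\al>1$. Hence the identity.

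\textbf{Statement (iv) and the main obstacle.} Here the relevant identity is $\pd_x(x^{1-\al}u)=-x^{1-\al}\bD_{x,\al}^*u$. Since $\pd_x u=-\bD_{x,\al}^*u+\tfrac{\al-1}{x}u$ is locally integrable on the interior of the $x$‑interval, $u(\cdot,\te)$ has an absolutely continuous representative there; integrating the identity from $x$ to the inner endpoint yields $u(x,\te)=f(\te)\,x^{\al-1}+(A_\al^*\bD_{x,\al}^*u)(x,\te)$, where $f(\te)$ is the trace of $u$ at that endpoint. To control $f$, I would use the one‑dimensional trace inequality $|f(\te)|^2\le C\int_J\big(|u(x,\te)|^2+|\pd_x u(x,\te)|^2\big)\,dx$ on a fixed sub‑interval $J$ bounded away from $x=0$, substitute $\pd_x u=-\bD_{x,\al}^*u+\tfrac{\al-1}{x}u$ (with $x^{-1}$ bounded on $J$), and integrate over $\es$, obtaining $\|f\|_{L^2_\te}\le C\big(\|u\|_{\bL}+\|\bD_{x,\al}^*u\|_{\bL}\big)$. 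I expect the genuine subtlety of the whole proof to lie in these boundary‑behaviour arguments for (iii) and (iv): showing that $u$ itself, and not merely a weighted version of it, admits an absolutely continuous representative up to the relevant endpoint, and then deciding correctly whether the endpoint term survives — in (iii) it is killed by $\bLx$‑integrability together with $\al>1$, whereas in (iv) it genuinely persists and accounts for the extra term $f(\te)\,x^{\al-1}$.
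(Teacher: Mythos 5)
Your proposal is correct, and its overall architecture (Cauchy--Schwarz for the pointwise bounds, explicit integration of the first-order ODEs $\bD_{x,\al}u=u_2$ and $\bD_{x,\al}^*u=u_1$ for (iii)--(iv), with the homogeneous solution $x^{-\al}$ discarded in (iii) because it fails to lie in $\bLx$ when $\al>1$) coincides with the paper's. You diverge in two local steps. For (ii), the paper proves a weighted Hardy inequality for $A_\al^*$ by integration by parts plus Cauchy--Schwarz and then obtains the bound for $\tfrac1xA_\al$ by duality in $\bLx$, whereas you prove both bounds directly by Schur's test with weight $x^{-1/2}$; the two are essentially equivalent in content (Schur's test is the standard route to Hardy), but your version treats the two operators symmetrically and makes the dependence of the constants on $\al$ explicit, while the paper's duality argument gets the range of exponents $r=0,1$ needed elsewhere in one stroke. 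For the bound on $f$ in (iv), the paper simply isolates $f(\te)x^{\al-1}=u-A_\al^*(\bD_{x,\al}^*u)$ and uses $\|x^{\al-1}\|_{\bLx}>0$ together with the $\bL\to\bL$ boundedness from (ii), which is slightly slicker than your one-dimensional trace inequality on an interval away from $x=0$; both give the stated estimate. Your extra care in (iii)--(iv) about absolute continuity of $x^{\al}u$ (resp.\ $x^{1-\al}u$) and the existence of the endpoint limit fills in details the paper leaves implicit. Finally, note that the constant term in the statement of (iv) should read $\|\bD_{x,\al}^*u\|_{\bL}$ rather than $\|\bD_{x,\al}u\|_{\bL}$ (a typo in the theorem as printed); this is what both your argument and the paper's proof actually produce.
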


\begin{proof}
  We can assume that $u$ is smooth and supported in the region
  $0<x<1$.  Let us begin analyzing the mapping properties of
  $A_\al^*$. 

  In view of the expression for $A^*_\al$, we will use the Hardy
  inequality
\begin{equation}\label{Hardy}
\int_0^1x^{2\al-2r-1} \bigg(\int_x^1y^{1-\al} \vp(y)\,
dy\bigg)^2dx\leq C\int_0^1 x^{3-2r} \vp(x)^2\, dx\,,
\end{equation}
with $r=0,1$.
To prove this, let us set
\[
 \psi(x):=\int_x^1y^{1-\al} \vp(y)\,dy\,.
\]
Then integrating by parts and using the Cauchy--Schwarz inequality we
find
\begin{align*}
\int_0^1x^{2\al-2r-1} \psi^2\, dx&
=\frac1{\al-1}\int_0^1  \vp\, \psi\, x^{\al-2r-1}\,
dx\\
&=\frac1{\al-r}\int_0^1  (x^{\al-r-\frac12} \psi)\,
(x^{\frac32-r} \vp)\, dx\\
&\leq \frac{1}{\al-r}\,\bigg(\int_0^1x^{2\al-2r-1} \psi^2\, dx\bigg)^{\frac12}\, \bigg(\int_0^1 x^{3-2r} \vp^2\, dx\bigg)^{\frac12}\,.
\end{align*}
This proves~\eqref{Hardy}. This implies that, with $r=0,1$, $\frac1xA_\al^*$
is a bounded map
\[
L^2((0,1),x^{3-2r} dx)\to L^2((0,1), x^{1-2r}\, dx)\,,
\]
and with $r=1$ this implies that $\frac1x A_\al^*:\bLx\to\bLx$. Since the star
denotes the adjoint with respect to the $\bLx$~product, a standard
duality argument then ensures that $A_\al$ is a bounded map
\[
L^2((0,1),x^{1+2r}dx)\to L^2((0,1), x^{2r-1}dx)\,,
\]
which with $r=0$ implies that $\frac 1xA_\al:\bLx\to\bLx$. The fact
that this also corresponds to $\bL\to\bL$ bounds is immediate.

Let us now pass to the pointwise bounds. To prove~(i) for $A_\al^*$ we
utilize the Cauchy-Schwarz inequality to write
\begin{align*}
  \big|A_\al^* \vp(x)\big|&=x^{\al-1}\bigg|\int_x^1y^{1-\al} \vp(y)\,
  dy\bigg|\\
  & \leq x^{\al-1}\bigg(\int_x^1y^{1-2\al}\,
  dy\bigg)^{\frac12}\bigg(\int_x^1y\,  \vp(y)^2\, dy\bigg)^{\frac12}\\
&\leq \| \vp\|_{\bLx}\bigg(\frac{1-x^{\al-1}}{2-2\al}\bigg)^{1/2}\\
&\leq (2-2\al)^{-\frac12} \| \vp\|_{\bLx}\,.
\end{align*}
The $L^\infty_x$ estimate for $A_\al$ is similar.

To prove~(iv), notice that if $u_1:= \bD_{x,\al}^* u\in \bL$, we can
solve the ODE
\[
\bD_{x,\al}^* u=u_1
\]
to write
\[
u=A_\al^*(u_1)+ f(\te)\, x^{\al-1}
\]
for some function~$f(\te)$. Moreover,
\[
\|f\|_{L^2_\te}= C\|f(\te)\, x^{\al-1}\|_\bL\leq
C(\|u\|_\bL+\|A_\al^*(u_1)\|_\bL)\leq  C(\|u\|_\bL+\|u_1\|_\bL)\,,
\]
where we have used that $A_\al^*:\bL\to\bL$ by~(ii). To prove~(iii),
the reasoning is analogous:
again we can solve the ODE
\[
\bD_{x,\al} u=u_2
\]
to write
\[
u=A_\al(u_2)+ f_2(\te)\, x^{-\al}\,,
\]
but we infer that $f_2$ must be $0$ because $x^{-\al}$ is not in
$\bLx$. The theorem then follows.
\end{proof}

\section*{Acknowledgements}

We would like to thank the referee for his detailed comments and suggestions, which helped to significantly improve our paper. A.E.\ is supported by the ERC Starting Grant 633152 and thanks McGill University for hospitality
and support. A.E.'s research is supported in part by the ICMAT Severo
Ochoa grant SEV-2015-0554. The research of N.K.\ is supported by NSERC grant
RGPIN 105490-2011.

\bibliographystyle{amsplain}

\begin{thebibliography}{99}\frenchspacing
%


\bibitem{Anderson03}
M.T. Anderson, Boundary regularity, uniqueness and non-uniqueness for AH Einstein metrics on 4-manifolds,
Adv. Math. 179 (2003) 205--249.

\bibitem{Anderson05}
M.T. Anderson, Geometric aspects of the AdS/CFT
correspondence, in: {\em AdS--CFT Correspondence: Einstein Metrics and
  their Conformal Boundaries}, ed. by O.~Biquard, IRMA Lect. Math. Theor. Phys. 8,
Eur. Math. Soc., Z\"urich, 2005, 1--31.


\bibitem{Anderson08}
M.T. Anderson, Einstein metrics with prescribed conformal infinity on
4-manifolds, Geom. Funct. Anal. 18 (2008) 305--366.


\bibitem{AC}
L. Andersson, P. Chrusciel, Solutions of the constraint equations in general relativity satisfying ``hyperboloidal boundary conditions'',
Dissertationes Math. 355 (1996) 100 pp. 

\bibitem{Bachelot1}
A. Bachelot, On wave propagation in the anti-de Sitter
cosmology, C. R. Math. Acad. Sci. Paris 349 (2011) 47--51.

\bibitem{Bachelot2}
A. Bachelot, The Klein--Gordon equation in the anti-de Sitter
cosmology, J. Math. Pures Appl. 96 (2011) 527--554.

\bibitem{Bachelot3}
A. Bachelot, New dynamics in the anti-de Sitter universe AdS$^5$,
Comm. Math. Phys. 320 (2013) 723--759.

\bibitem{Biquard}
O. Biquard, M\'etriques d'Einstein asymptotiquement sym\'etriques, Ast\'erisque 265 (2000).

\bibitem{B}
P. Bizon, Is AdS stable?, Gen. Relativ. Gravit. 46  (2014) 1724. 

\bibitem{BF}
P. Breitenlohner, D.Z. Freedman, Stability in gauged extended supergravity, Ann. Physics 144 (1982) 249--281.

\bibitem{Choquet1}
Y. Choquet-Bruhat, Solutions globales d'\'equations d'ondes sur l'espace-temps anti de Sitter, C. R. Acad. Sci. Paris S\'er. I Math. 308 (1989) 323--327.

\bibitem{Choquet2} 
Y. Choquet-Bruhat, Global solutions of Yang-Mills equations on anti-de Sitter spacetime, Classical Quantum Gravity 6 (1989) 1781--1789.


\bibitem{CB} 
Y. Choquet-Bruhat, {\em General Relativity and the Einstein
  equations}, Oxford University Press, Oxford, 2009.


\bibitem{CK}
D. Christodoulou, S. Klainerman, {\em The global nonlinear stability of the Minkowski space}, Princeton University Press, Princeton, 1993.


\bibitem{DeTurck1}
D.M. DeTurck, Existence of metrics with prescribed Ricci curvature: local theory, Invent. Math. 65 (1981) 179--207.

\bibitem{DeTurck2}
D.M. DeTurck, The Cauchy problem for Lorentz metrics with prescribed Ricci
curvature, Compositio Math. 48 (1983) 327--349.

\bibitem{PRD}
A. Enciso, N. Kamran, Causality and the conformal boundary of
AdS in real-time holography, {Phys. Rev.~D} 85 (2012) 106016.

\bibitem{JMPA}
A. Enciso, N. Kamran, A singular initial-boundary value
  problem for nonlinear wave equations and holography in asymptotically anti-de
  Sitter spaces, J. Math. Pures Appl. 103 (2015) 1053--1091.


\bibitem{FG}
C. Fefferman, C.R. Graham, {\em The ambient metric}, Princeton University Press, Princeton, 2012.

\bibitem{Friedrich}
H. Friedrich, Einstein equations and conformal structure:
existence of anti-de Sitter-type space-times, J. Geom. Phys. 17
(1995) 125--184.

\bibitem{FN}
H. Friedrich, G. Nagy,  The initial boundary value problem for
Einstein's vacuum field equation, Comm. Math. Phys. 201 (1999) 619--655.

\bibitem{F}
H. Friedrich, On the AdS stability problem, Classical Quantum Gravity 31 (2014) 105001.


\bibitem{GL}
C.R. Graham, J.M. Lee, Einstein metrics with prescribed conformal
infinity on the ball, Adv. Math. 87 (1991) 186--225.


\bibitem{HE}
S.W. Hawking, G.F.R. Ellis, {\em The large scale structure of
  space-time},Cambridge University Press, Cambridge, 1973.

\bibitem{HT}
M. Henneaux, C. Teitelboim, Asymptotically anti-de Sitter spaces, Comm. Math. Phys. 98 (1985) 391--424. 

\bibitem{Holzegel}
G. Holzegel, Well-posedness for the massive wave equation on
asymptotically anti-de Sitter spacetimes, J. Hyperbolic Differ. Equ. 9
(2012) 239--261.

\bibitem{HW}
G. Holzegel, C. Warnick,
Boundedness and growth for the massive wave equation on asymptotically
anti-de Sitter black holes,  J. Funct. Anal. 266 (2014) 2436--2485.

\bibitem{WarnickNew}
G. Holzegel, C. Warnick,
The Einstein-Klein-Gordon-AdS system for general boundary conditions,
{\tt 1312.5332}.

\bibitem{IW}
A. Ishibashi, R.M. Wald, Dynamics in non-globally-hyperbolic static
spacetimes III. Anti-de Sitter spacetime, Class. Quant. Grav. 21
(2004) 2981--3013.

\bibitem{Kannar}
J. K\'ann\'ar, Hyperboloidal initial data for the vacuum Einstein equations with cosmological constant, Classical Quantum Gravity 13 (1996) 3075--3084.



\bibitem{Maldacena}
J. Maldacena, The large $N$ limit of superconformal field theories and
supergravity, Adv. Theor. Math. Phys. 2 (1998) 231--252.

\bibitem{Mazzeo}
R. Mazzeo, Elliptic theory of differential edge operators I,
Comm. PDE, 16 (1991) 1615--1664.



\bibitem{Ringstrom}
H. Ringstrom, {\em The Cauchy problem in General Relativity}\/, EMS,
Freiburg, 2009.


\bibitem{Tao}
T. Tao, {\em Nonlinear dispersive equations}, AMS, Providence, 2006.


\bibitem{Vasy}
A. Vasy, The wave equation on asymptotically anti de Sitter spaces, Anal. PDE 5 (2012) 81--144.

\bibitem{Warnick}
C. Warnick,
The massive wave equation in asymptotically AdS spacetimes,
Comm. Math. Phys. 321 (2013) 85--111.

\bibitem{Witten}
E. Witten, 
Anti de Sitter space and holography,
Adv. Theor. Math. Phys. 2 (1998) 253--291. 


\end{thebibliography}

\end{document}